\tikzset{
every node/.style={draw, circle, inner sep=2pt},
every label/.style={rectangle, draw=none}
}
\newtheorem{theorem}{Theorem}[section]
\newtheorem{lemma}[theorem]{Lemma}
\newtheorem{proposition}[theorem]{Proposition}
\newtheorem{corollary}[theorem]{Corollary}
\theoremstyle{definition}
\newtheorem{definition}[theorem]{Definition}
\newtheorem{remark}[theorem]{Remark}
\newtheorem{example}[theorem]{Example}
\newtheorem{question}[theorem]{Question}
\newcommand{\bone}{{\bf 1}}
\newcommand{\bzero}{{\bf 0}}
\newcommand{\trans}{^\top}
\newcommand{\dunion}{\mathbin{\dot\cup}}
\newcommand{\bx}{{\bf x}}
\newcommand{\ba}{{\bf a}}
\newcommand{\bb}{{\bf b}}
\newcommand{\bd}{{\bf d}}
\newcommand{\be}{{\bf e}}
\newcommand{\bu}{{\bf u}}
\newcommand{\by}{\mathbf{y}}
\newcommand{\bw}{\mathbf{w}}
\newcommand{\Col}{\operatorname{Col}}
\newcommand{\vspan}{\operatorname{span}}
\newcommand{\spec}{\operatorname{spec}}
\newcommand{\range}{\operatorname{range}}
\newcommand{\nul}{\operatorname{null}}
\newcommand{\tr}{\operatorname{tr}}
\newcommand{\mptn}{\mathcal{S}} 
\newcommand{\mptncl}{\mathcal{S}^{\rm cl}}
\newcommand{\GL}[2][n]{\operatorname{GL}^{(#2)}_{#1}(\mathbb{R})}
\newcommand{\Paw}{\mathrm{Paw}}
\newcommand{\inp}[2]{\left\langle#1,#2\right\rangle}
\newcommand{\mat}[1][n]{\operatorname{Mat}_{#1}(\mathbb{R})}
\newcommand{\msym}[1][n]{\operatorname{Sym}_{#1}(\mathbb{R})}
\title{The inverse nullity pair problem and\\ the strong nullity interlacing property}
\author{
Aida Abiad
\thanks{Department of Mathematics and Computer Science, Eindhoven University of Technology, Eindhoven, The Netherlands. Department of Mathematics: Analysis, Logic and Discrete Mathematics, Ghent University, Ghent, Belgium (a.abiad.monge@tue.nl)}
\and 
Bryan A. Curtis
\thanks{Department of Mathematics, Iowa State University,
Ames, IA 50011, USA (bcurtis1@iastate.edu)}
\and 
Mary Flagg
\thanks{Department of Mathematics, Statistics and Computer Science, University of St.~Thomas, Houston, TX, U.S.A. (flaggm@stthom.edu)}
\and
H. Tracy Hall
\thanks{Hall Labs LLC, Provo Utah, U.S.A. (h.tracy@gmail.com)}
\and
Jephian C.-H.~Lin
\thanks{Department of Applied Mathematics, National Sun Yat-sen University, Kaohsiung 80424, Taiwan (jephianlin@gmail.com)}
\and 
Bryan Shader
\thanks{University of Wyoming, Colorado, U.S.A. (bshader@uwyo.edu)}
}
\date{}
\begin{document}
\maketitle

\begin{abstract}
The inverse eigenvalue problem studies the possible spectra among matrices whose off-diagonal entries have their zero-nonzero patterns described by the adjacency of a graph $G$.  In this paper, we refer to the $i$-nullity pair of a matrix $A$ as $(\nul(A), \nul(A(i))$, where $A(i)$ is the matrix obtained from $A$ by removing the $i$-th row and column. The inverse $i$-nullity pair problem is considered for complete graphs, cycles, and trees.  The strong nullity interlacing property is introduced, and the corresponding supergraph lemma and decontraction lemma are developed as new tools for constructing matrices with a given nullity pair.
\end{abstract}  

\noindent{\bf Keywords:} inverse eigenvalue problem, strong nullity interlacing property, strong Arnold property, eigenvalue interlacing, rooted graph minor
 
\medskip

\noindent{\bf AMS subject classifications:}
05C50, 
05C83, 
15A03, 
15B57, 
65F18 

\section{Introduction}
\label{sec:intro}

Inverse eigenvalue problems are interested in the existence of matrices that have prescribed spectral data. Often these matrices have additional restraints. For example, the entries of a matrix can be restricted by the adjacencies in an associated graph. In this paper we study an inverse eigenvalue problem in which we are given a pair of integers $(k,\ell)$ and are asked to find a real symmetric matrix with nullity $k$ for which a certain principal submatrix has nullity $\ell$. Before formalizing this problem, we provide some background and introduce the required notation.

Let $G$ be a simple graph on $n$ vertices. We always label the vertex set $V(G)$ of $G$ with the set $[n] = \{1,\ldots,n\}$ so that there is a natural correspondence between indices of matrices in $\mptn(G)$ and vertices of $G$. The set $\mptn(G)$ contains all real symmetric matrices $A = \begin{bmatrix} a_{i,j}\end{bmatrix}$ such that $a_{i,j} \neq 0$ for $i \neq j$ if and only if $\{i,j\}$ is an edge of $G$. Note that $G$ only governs the off-diagonal entries of matrices in $\mptn(G)$ via its adjacencies and no restrictions are placed on the diagonal entries. An important problem that is closely related to the present work is the \emph{inverse eigenvalue problem of a graph $G$} (IEP-$G$) which aims to find all possible spectra among matrices in $\mptn(G)$.  

One source of inspiration for this paper is the $\lambda,\mu$ problem \cite{lambmu2013}. This variant of the IEP-$G$ studies the interlacing inequalities for a matrix $A$ and its principal submatrix $A(i)$ obtained by deleting row and column $i$. In 1974, Hochstadt~\cite{Hochstadt74} studied the matrices of paths $P_n$ and showed that given some distinct real numbers $\lambda_1 < \mu_1 < \lambda_2 < \cdots < \mu_{n-1} < \lambda_n$, there is at most one matrix $A$ in $\mptn(P_n)$ with nonnegative off-diagonal entries such that $\spec(A) = \{\lambda_1, \ldots, \lambda_n\}$ and $\spec(A(1)) = \{\mu_1, \ldots, \mu_{n-1}\}$. In 1976, Gray and Wilson~\cite{GW76} and Hald~\cite{Hald76} independently gave constructive proofs showing such $A$ always exists. 
Ferguson~\cite{Ferguson80} continued this line of research and studied the spectra of matrices associated with cycles $\mptn(C_n)$.  Monfared and Shader~\cite{MS13} showed that for each connected graph $G$ and a vertex $i\in V(G)$, given  distinct real numbers $\lambda_1 < \mu_1 < \lambda_2 < \cdots < \mu_{n-1} < \lambda_n$, there exists a matrix $A\in\mptn(G)$ such that $\spec(A) = \{\lambda_1, \ldots, \lambda_n\}$ and $\spec(A(i)) = \{\mu_1, \ldots, \mu_{n-1}\}$.  

Another approach to the IEP-$G$ is through the maximum nullity of a graph. The \emph{maximum nullity} of a graph $G$, denoted $M(G)$, is the largest nullity amongst all matrices in $\mptn(G)$. Since $A\in\mptn(G)$ implies $A - \lambda I\in\mptn(G)$, the maximum nullity $M(G)$ is also the largest multiplicity among all eigenvalues of all matrices in $\mptn(G)$. The maximum nullity of a graph has garnered much attention in recent years; see, e.g., \cite{IEPGZF22} and the references therein.  

In this paper, we combine the idea of interlacing from the $\lambda,\mu$ problem with the maximum nullity of a graph. Let $i \in [n]$ and let $A$ be an $n\times n$ real symmetric matrix.  The \emph{$i$-nullity pair} of $A$ is the pair $(\nul(A), \nul(A(i))$. By convention, the nullity of a matrix of order $0$ is considered to be $0$. Let $G$ be a graph and $i\in V(G)$.  We say that $G$  \emph{allows the $i$-nullity pair} $(k,\ell)$ 
provided that there is a matrix $A\in\mptn(G)$ with the $i$-nullity pair $(k,\ell)$. 
Note that the only possible nullity pairs $(k,\ell)$ are those with $|k - \ell| \leq 1$.


\begin{question}
Let $G$ be a graph and $i\in V(G)$.  Given a pair of nonnegative integers $(k,\ell)$, is there a matrix $A\in \mptn(G)$ such that $(\nul(A),\nul(A(i))=(k,\ell)$?
\end{question}
 
Many researchers working on problems related to the IEP-$G$ have had great success using ``strong properties.'' The development of strong properties is due to the pioneering work of Y.~Colin de Verdière while studying the maximum nullity of a special class of matrices. These ideas ultimately lead to the strong Arnold property and many other strong properties have since been studied \cite{CdV, CdVF, BFH05,gSAP, IEPG2}. Motivated by this, we introduce the \emph{strong nullity interlacing property} (SNIP).
\begin{definition}
An $n\times n$ matrix $A$ is said to have the \emph{$i$-strong nullity interlacing property} ($i$-SNIP) if $X = O$ is the only symmetric matrix that satisfies $A\circ X = O$, $I\circ X = O$ and $(AX)(i,:] = O$, where $(AX)(i,:]$ is the submatrix of $AX$ obtained by removing the $i$-th row.
\end{definition}

The theoretical underpinnings of the SNIP are postponed until Section \ref{sec:theory} since they closely resemble the development of other strong properties. Our primary application of the SNIP is Theorem~\ref{thm:minmono} which allows for the characterization of many graphs that allow the nullity pair $(k,\ell)$ in terms of graph minors.

Recall that the \emph{contraction} of a graph $G$ along an edge $\{u,v\}$ is the graph obtained from $G$ by identifying $u$ and $v$ and removing any resulting loops and multi-edges. We write $G - v$ to denote the graph obtained from $G$ by removing the vertex $v$. A \emph{rooted graph} is a pair $(G,i)$, where $G$ is a simple graph and $i\in V(G)$ is called the \emph{root}.  We say that the rooted graph $(G,i)$ is a \emph{rooted minor} of $(H,i)$ if $(G,i)$ can be obtained from $(H,i)$ by a sequence of edge deletions, deletions of isolated vertices $v\not=i$, and edge contractions (the newly formed vertex is the root if the contracted edge is incident to $i$).
Since we only focus on rooted minors, we may refer to a rooted minor simply as a minor.

We say that the rooted graph $(G,i)$ \emph{allows the nullity pair} $(k,\ell)$ (with the SNIP, respectively) provided that there is a matrix $A\in\mptn(G)$ with the $i$-nullity pair $(k,\ell)$ (with the $i$-SNIP, respectively). We say that a rooted graph $(G,i)$ is a \emph{minimal rooted minor} (or a \emph{minimal minor}) for the nullity pair $(k,\ell)$ if $(G,i)$ allows the nullity pair $(k,\ell)$ with the SNIP and each of its proper minors does not allow the nullity pair $(k,\ell)$ with the SNIP. 

Next we state the Minor Monotonicity Theorem for nullity pairs, whose proof will be given in Section~\ref{sec:theory}.  

\begin{restatable}[Minor Monotonicity]{theorem}{minmono}
\label{thm:minmono}
Let $(G,i)$ be a rooted minor of $(H,i)$.  If $(G,i)$ allows the nullity pair $(k,\ell)$ with the SNIP, then $(H,i)$ allows the nullity pair $(k,\ell)$ with the SNIP. 
\end{restatable}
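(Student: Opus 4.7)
The plan is to reduce to the three elementary reverse-minor operations: adding an edge, adding an isolated non-root vertex, and decontracting (splitting) a vertex. Since every rooted minor $(G,i)$ of $(H,i)$ arises from a finite sequence of edge deletions, deletions of isolated non-root vertices, and edge contractions, it suffices to show that if $(G,i)$ is obtained from $(H,i)$ by one such operation and $(G,i)$ allows $(k,\ell)$ with the SNIP, then so does $(H,i)$. So starting from a witness $A \in \mptn(G)$ with the $i$-SNIP and $i$-nullity pair $(k,\ell)$, I would construct $B \in \mptn(H)$ with the same data. The three cases correspond to what should be called the \emph{supergraph lemma} (adding an edge), an isolated-vertex extension, and the \emph{decontraction lemma}.

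For the supergraph lemma, given a new edge $\{u,v\} \in E(H)\setminus E(G)$, define the one-parameter family $B(t) = A + t(\be_u\be_v\trans + \be_v\be_u\trans)$. In the style of all strong properties, the $i$-SNIP is engineered to be exactly the nondegeneracy condition that makes the map $A \mapsto (A\circ \bone\bone\trans, (AX)(i,:])$ transverse at $A$, so by an implicit-function/open-neighborhood argument, for all sufficiently small $t\neq 0$ the matrix $B(t)$ lies in $\mptn(H)$ and still has the $i$-nullity pair $(k,\ell)$ together with the $i$-SNIP. Adding an isolated non-root vertex is handled by the direct sum $B = A\oplus [1]$: the nullity pair is preserved exactly, and the defining system of the SNIP for $B$ decouples into the corresponding system for $A$ plus a trivial scalar equation.

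The main obstacle is the decontraction lemma. Suppose $G$ is obtained from $H$ by contracting $\{u,v\}$ to a single vertex $w$, and write $A\in\mptn(G)$ in block form against $w$ as $A = \begin{pmatrix} a & \bv\trans \\ \bv & A' \end{pmatrix}$. The idea is to split the row/column of $w$ into rows/columns for $u$ and $v$, distributing the entries of $\bv$ between them in a manner consistent with the adjacencies of $H$ (zeroing out each coordinate of the $u$-row at non-neighbors of $u$, and similarly for $v$), assigning a large weight $s$ to the new $\{u,v\}$ entry, and choosing the diagonal entries at $u,v$ so that the effective Schur-complement behavior at $\{u,v\}$ reproduces $A$ up to the one dominant eigendirection concentrated along $\be_u - \be_v$. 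The construction needs to be calibrated so that as $s\to\infty$ the $(k+\ell)$-dimensional kernel structure of $A$ lifts to a kernel structure for $B$ with the same $i$-nullity pair, while the split-off eigenvector is pushed away from $0$. When the contracted edge is incident to the root, the case that $w=i$ forces one of $u,v$ to play the role of the root in $H$, and the conditions $I\circ X = O$, $(BX)(i,:]=O$ in the SNIP definition must be carefully restricted to the split block.

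Finally, in each of the three cases I would verify that the SNIP transfers. For the supergraph and isolated-vertex steps this follows from the transversality/block argument above. For the decontraction, the key computation is to take a candidate symmetric $X$ satisfying $B\circ X = O$, $I\circ X = O$, $(BX)(i,:]=O$, and to show, using the large weight $s$ to force the $(u,v)$-block of $X$ to vanish and the splitting of $\bv$ to collapse the $u$- and $v$-rows of $X$ into a single consistent row, that $X$ descends to a matrix $X'$ on $G$ satisfying the $i$-SNIP equations for $A$; by hypothesis $X'=O$, and hence $X=O$. This collapse-and-descend argument is the technically delicate step, and I expect it to be where most of the work lies.
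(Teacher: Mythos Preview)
Your overall strategy---reduce to the three elementary reverse-minor operations and handle each separately---is exactly what the paper does, and the isolated-vertex step via $A\oplus[1]$ is correct and matches Proposition~\ref{prop:directsum}. But the supergraph step has a genuine gap that undermines the whole argument.

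The claim that $B(t)=A+t(\be_u\be_v\trans+\be_v\be_u\trans)$ \emph{itself} retains the $i$-nullity pair $(k,\ell)$ for small $t\neq 0$ is false. A rank-one symmetric perturbation generically destroys nullity. Concretely, take $G=P_3$ with leaf root $i=1$ and
\[
A=\begin{bmatrix}0&1&0\\1&0&1\\0&1&0\end{bmatrix},
\]
which has $1$-nullity pair $(1,0)$ and the $1$-SNIP (since $A(1)$ is nonsingular; see Proposition~\ref{prop:pinvertible}). Adding the edge $\{1,3\}$ to reach $H=K_3$ gives $B(t)$ with $\det B(t)=2t$, so $\nul(B(t))=0$ for every $t\neq 0$. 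The nullity pair is lost immediately.

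What transversality actually buys you is different from what you wrote. The $i$-SNIP says that the tangent space of $\mptncl(G)$ and the tangent space of the $\GL{i}$-congruence orbit $\{Q\trans AQ\}$ together span $\msym$ (Proposition~\ref{prop:snipequiv}). By the inverse function theorem applied to $F(B,Q)=Q\trans AQ+B$, any matrix $M$ near $A$---in particular any $M\in\mptn(H)$ obtained by turning on the new edge---can be written as $M=(Q')\trans AQ'+B'$ with $B'\in\mptncl(G)$ small. The desired matrix is then $A'=(Q')\trans AQ'=M-B'$: it lies in $\mptn(H)$ because subtracting $B'$ preserves the $H$-pattern, and it has the same $i$-nullity pair as $A$ because $Q'\in\GL{i}$ acts by congruence on both $A$ and $A(i)$. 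You never get to stay on the straight line $B(t)$; you must move along the congruence orbit.

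Your decontraction sketch (large $s$ on the split edge, Schur-complement limit, collapse-and-descend for $X$) is a different heuristic from the paper's, and it is not clear it closes. The paper instead reuses the same inverse-function-theorem machinery: it embeds $A$ as $A\oplus[1]$, defines a modified perturbation function $\tilde F$ whose additive part lives in a tilted subspace $\mptncl(G_1,\delta)$ that forces rows $n$ and $n{+}1$ to stay parallel on $\beta$, applies the surjective-derivative argument to land on a matrix $\tilde A$ of the required shape, and then conjugates by a fixed $E\in\GL[n+1]{i}$ to obtain $A'\in\mptn(H)$. The SNIP for $A'$ then follows from the fact that $C\mapsto E\trans CE$ is a linear bijection carrying $\mptncl(G_1,\delta)$ to $\mptncl(H)$ and preserving the congruence tangent space. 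Nothing asymptotic in $s$ is needed; the argument is a second application of the same transversal-intersection picture.
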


A consequence of this theorem is that a rooted graph $(G,i)$ allows the nullity pair $(k,\ell)$ with the SNIP if and only if $(G,i)$ contains a minimal minor for $(k,\ell)$ as a minor. The importance of this observation cannot be understated as it justifies most of the present work.

This paper is organized as follows. In Section \ref{sec:preliminaries} we establish basic notation, definitions, and background. Then, in Section~\ref{sec:snipsap}, we investigate the behavior of the SNIP and its relation with the strong Arnold property (SAP). In Section~\ref{sec:inpp}, we provide many examples by establishing the realizable nullity pairs for various families of graphs. Illustrating the strength of Theorem \ref{thm:minmono}, Section~\ref{sec:minors} applies the SNIP to characterize the rooted graphs $(G,i)$ that allow the $i$-nullity pairs $(k,\ell)$ with $k<2$ or $\ell < 2$. 
Finally, in Section~\ref{sec:theory}, we provide the proofs of the supergraph and decontraction lemmas as well as the proof of Theorem \ref{thm:minmono}.

\section{Preliminaries}
\label{sec:preliminaries}


All matrices considered in this paper are real matrices. We write $\mat$ and $\msym$ for the set of $n\times n$ matrices and symmetric matrices, respectively.

Recall that $[n]$ indicates the first $n$ positive integers. Let $A$ be an $m\times n$ matrix, $\alpha\subseteq [m]$, and $\beta\subseteq [n]$.  Then $A[\alpha,\beta]$ is the submatrix of $A$ induced by the rows indexed by $\alpha$ and columns indexed by $\beta$.  The submatrix of $A$ obtained by deleting the rows indexed by $\alpha$ and columns indexed by $\beta$ is denoted by $A(\alpha,\beta)$. We may also combine both notational conventions, for example $A(\alpha,\beta]$ is the submatrix of $A$ obtained by removing the rows indexed by $\alpha$ and keeping columns indexed by $\beta$.  When $\alpha = \beta$, we abbreviate our notation to $A[\alpha]$, $A[\alpha)$, $A(\alpha]$, and $A(\alpha)$. When $\alpha = \{i\}$, we abbreviate $A(\{i\})$ by $A(i)$. Following the convention in many programming languages, the symbol $:$ stands for all indices, e.g., $A(i,:]$ is the submatrix of $A$ obtained by deleting row $i$. 

As noted earlier, 
if $(k, \ell)$ is a realizable $i$-nullity pair of $G$, then $|k - \ell| \leq 1$.  Further, there is a matrix $A\in\mptn(G)$ with $\nul(A) = k$ if and only if $0 \leq k \leq M(G)$.  
It is well-known and straightforward to check the following characterization; see, e.g., \cite{BFH04}.  

\begin{remark}
\label{rem:threecases}
Let $A\in\msym$ and $i\in[n]$. Without loss of generality $i = 1$ and so $A$ has the form
\begin{equation}
\label{eq:Apartition}
    A = \begin{bmatrix} a & \bb\trans \\
    \bb & C
    \end{bmatrix},    
\end{equation}
where $C = A(i)$. Then the relations between $\nul(A)$ and $\nul(A(i))$ are as follows:
\begin{enumerate}
\item $\nul(A) + 1 = \nul(A(i))$ if and only if $\bb\notin\Col(C)$;
\item $\nul(A) = \nul(A(i))$ if and only if $\bb\in\Col(C)$ and $a \neq \bx\trans C\bx$ for each $\bx$ with $C\bx = \bb$;  
\item $\nul(A) - 1 = \nul(A(i))$ if and only if $\bb\in\Col(C)$ and $a = \bx\trans C\bx$ for some $\bx$ with $C\bx = \bb$.  
\end{enumerate}
\end{remark}

For the three cases in Remark~\ref{rem:threecases}, we say the index (or the corresponding vertex) $i$ is \emph{upper}, \emph{neutral}, and \emph{downer}, respectively.  Note that the deciding factor for an index to be neutral or downer is the entry $a$ in (\ref{eq:Apartition}); the remaining diagonal entries are free. These observations imply the well-known Proposition \ref{prop:unique_t_downer} below. We provide a proof for completeness.

When dimensions have been specified (or they are clear from context), $E_{i,j}$ is the matrix with $(i,j)$-entry equal to $1$ and all other entries equal to zero. Similarly, let $\be_i$ be the $i$-th standard basis vector of $\mathbb{R}^n$, i.e., the $i$-th column of the identity matrix $I_n$.  

\begin{proposition}\label{prop:unique_t_downer}
Let $A$ be an $n\times n$ symmetric matrix. Then $i$ is a neutral index if and only if there is a (unique) value $t\not=0$ such that $i$ is a downer index for $A + tE_{i,i}$.
\end{proposition}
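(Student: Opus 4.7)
The plan is to apply Remark~\ref{rem:threecases} directly, exploiting the fact that adding $tE_{i,i}$ to $A$ only perturbs the scalar entry $a$ in the block decomposition (\ref{eq:Apartition}) while leaving the off-diagonal column $\bb$ and the submatrix $C = A(i)$ untouched. In particular, whether $\bb\in\Col(C)$ (the dichotomy between upper and ``not upper'') is invariant under this perturbation, so I can immediately restrict attention to the case $\bb \in \Col(C)$ in both directions of the biconditional.

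The key observation to pin down first is that when $\bb\in\Col(C)$, the quantity $\bx\trans C\bx$ has the same value for \emph{every} $\bx$ satisfying $C\bx = \bb$. The reason is the usual orthogonality: if $C\bx=C\by=\bb$, then $\bx - \by \in \nul(C) = \Col(C)^\perp$ (using that $C$ is symmetric), and hence $(\bx-\by)\trans\bb = 0$, giving $\bx\trans C\bx = \bx\trans\bb = \by\trans\bb = \by\trans C\by$. Denote this common value by $a^*$. With this in hand, cases (2) and (3) of Remark~\ref{rem:threecases} reduce to the clean statements: given $\bb\in\Col(C)$, the index $i$ is neutral iff $a\neq a^*$ and downer iff $a = a^*$.

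Now the biconditional follows by a one-line computation. For the forward direction, assume $i$ is neutral for $A$. Then $\bb\in\Col(C)$ and $a\neq a^*$, so setting $t := a^* - a$ gives $t\neq 0$, and $A+tE_{i,i}$ has $(i,i)$-entry equal to $a^*$ with $\bb,C$ unchanged; thus $i$ is downer for $A+tE_{i,i}$, and this choice of $t$ is clearly the only one that makes the $(i,i)$-entry equal to $a^*$. For the reverse direction, if $i$ is downer for $A+tE_{i,i}$ with $t\neq 0$, then $\bb\in\Col(C)$ and $a+t = a^*$, so $a = a^* - t \neq a^*$, meaning $i$ is neutral for $A$.

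There is no real obstacle here; the only substantive step is the well-definedness of $a^*$, which is a routine orthogonality argument that trivializes the case analysis in Remark~\ref{rem:threecases}.
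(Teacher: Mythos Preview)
Your proof is correct. Both your argument and the paper's reduce to the same key fact: with $\bb\in\Col(C)$, the index $i$ is downer for $A+tE_{i,i}$ precisely when $a+t$ equals the (well-defined) value $\bx\trans C\bx$ for any solution of $C\bx=\bb$. The difference is in how this is verified. The paper performs an explicit congruence
\[
\begin{bmatrix}1 & -\bx\trans\\ \bzero & I\end{bmatrix}(A+tE_{i,i})\begin{bmatrix}1 & \bzero\trans\\ -\bx & I\end{bmatrix}
=\begin{bmatrix}(a+t)-\bx\trans C\bx & \bzero\trans\\ \bzero & C\end{bmatrix}
\]
and invokes Sylvester's law of inertia, which implicitly handles the well-definedness of $\bx\trans C\bx$ (any choice of $\bx$ yields the same nullity). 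You instead stay at the level of Remark~\ref{rem:threecases} and isolate the well-definedness of $a^*:=\bx\trans C\bx$ as a separate orthogonality lemma. Your route is slightly more elementary in that it avoids Sylvester; the paper's route packages the computation into a single Schur-complement-style congruence. Neither approach offers a real advantage over the other here.
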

\begin{proof}
Suppose, without loss of generality, that $1$ is a neutral index for $A$, and assume that $A$ has the form in \eqref{eq:Apartition}. By Remark~\ref{rem:threecases} there exists a vector $\bf x$ such that ${\bf b} = C{\bf x}$. Then
\[
\left[\begin{array}{c|c}
1 & -\bx\trans\\
\hline
\bzero & I
\end{array}\right]
(A + tE_{i,i})
\left[\begin{array}{c|c}
1 & \bzero\trans\\
\hline
-\bx & I
\end{array}\right]
=
\left[
\begin{array}{c|c}
(a + t)-{\bf x}\trans C{\bf x} & {\bf 0}\trans  \\ \hline
{\bf 0} & C
\end{array} \right]. 
\]
By Sylvester's law of inertia the nullity of the above matrix equals $\nul(A + tE_{i,i})$. Moreover, its nullity is greater than $\nul(C)$ if and only if $(a + t) = {\bf x}\trans C{\bf x}$. Thus there is a unique value $t\not=0$ such that $1$ as a downer index of $A + tE_{i,i}$. The converse follows a similar argument.
\end{proof}

The above proposition translates nicely in terms of nullity pairs.

\begin{corollary}
\label{cor:kk1}
Let $G$ be a graph and $i\in V(G)$.  Then $(k,k)$ is the $i$-nullity pair for some matrix in $\mptn(G)$ if and only if $(k+1, k)$ is the $i$-nullity pair for some matrix in $\mptn(G)$.  
\end{corollary}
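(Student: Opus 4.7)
The plan is to leverage Proposition~\ref{prop:unique_t_downer}, which already establishes the correspondence between neutral and downer behavior at a fixed index via a nonzero perturbation of the $(i,i)$-entry. The crucial observation that makes this work inside $\mptn(G)$ is that matrices in $\mptn(G)$ are unconstrained on the diagonal, so $A + tE_{i,i} \in \mptn(G)$ whenever $A \in \mptn(G)$; moreover, $E_{i,i}$ does not affect any entry of $A(i)$, so the second coordinate of the nullity pair is preserved by such a shift.

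For the forward direction, suppose $A \in \mptn(G)$ has $i$-nullity pair $(k,k)$. Then $\nul(A) = \nul(A(i)) = k$, so $i$ is a neutral index for $A$. Applying Proposition~\ref{prop:unique_t_downer} yields a unique $t \neq 0$ such that $i$ is a downer index for $A + tE_{i,i}$. Setting $A' = A + tE_{i,i}$, we have $A' \in \mptn(G)$ and $A'(i) = A(i)$ has nullity $k$, so $A'$ has $i$-nullity pair $(k+1, k)$.

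For the reverse direction, suppose $A' \in \mptn(G)$ has $i$-nullity pair $(k+1, k)$, so $i$ is a downer index for $A'$. Pick any $s \neq 0$ and set $A = A' - sE_{i,i} \in \mptn(G)$. The identity $A + sE_{i,i} = A'$ exhibits a nonzero value making $i$ a downer index for $A + sE_{i,i}$; the converse half of Proposition~\ref{prop:unique_t_downer} then forces $i$ to be neutral for $A$. Since $A(i) = A'(i)$ has nullity $k$, the matrix $A$ has $i$-nullity pair $(k,k)$. There is no real obstacle here: both implications reduce to direct applications of Proposition~\ref{prop:unique_t_downer} together with the closure of $\mptn(G)$ under modification of a single diagonal entry.
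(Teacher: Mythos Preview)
Your proof is correct and follows essentially the same approach as the paper: both directions are direct applications of Proposition~\ref{prop:unique_t_downer}, together with the observation that diagonal perturbations preserve membership in $\mptn(G)$ and leave $A(i)$ unchanged. The paper in fact states the corollary without a separate proof, treating it as an immediate translation of Proposition~\ref{prop:unique_t_downer}; you have simply written out those details explicitly and accurately.
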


We now turn our attention towards the SNIP. Let $A\in \msym$ and $i \in [n]$ be given. Observe that the equations  
\[
X = X\trans,\quad A\circ X = O,\quad I\circ X = O \quad \text{and}\quad (AX)(i,:] = O
\]
are equivalent to a system of linear equations in the entries of $X$, which means
that the set of solutions forms a subspace of $\msym$.
For matrix $A$ to have the $i$-SNIP is equivalent to this subspace being trivial.

\begin{example}\label{example:K13}
\label{ex:verification}
Label the vertices of the star $K_{1, 3}$ such that some pendent vertex
is labeled $i = 1$ and the center vertex is labeled $4$.
Let
\[
    A = \begin{bmatrix}
     0 & 0 & 0 & 1 \\
     0 & 0 & 0 & 1 \\
     0 & 0 & 0 & 1 \\
     1 & 1 & 1 & 0
    \end{bmatrix}
    \in\mptn(K_{1,3}).
\]
Let $X\in \msym[4]$ and assume $A\circ X = I\circ X = O$. Then 
\[
    X = \begin{bmatrix}
     0 & x & y & 0 \\
     x & 0 & z & 0 \\
     y & z & 0 & 0 \\
     0 & 0 & 0 & 0
    \end{bmatrix},
\]
for some real $x,y$ and $z$. By direct computation
\[
    AX = \begin{bmatrix}
     0 & 0 & 0 & 0 \\
     0 & 0 & 0 & 0 \\
     0 & 0 & 0 & 0 \\
     x+y & x+z & y+z & 0
    \end{bmatrix}.
\] 
Now assume that a solution to $(AX)(1,:] = O$ is required.
Writing down a column vector of coefficients for each
of the $12$ entries, solving this equation is equivalent to solving the following system of twelve linear equations in $x,y$ and $z$:
\[
    \begin{bmatrix} x & y & z \end{bmatrix}
    \begin{bmatrix}
     0 & 0 & 0 & 0 & 0 & 0 & 0 & 0 & 1 & 1 & 0 & 0 \\
     0 & 0 & 0 & 0 & 0 & 0 & 0 & 0 & 1 & 0 & 1 & 0 \\
     0 & 0 & 0 & 0 & 0 & 0 & 0 & 0 & 0 & 1 & 1 & 0 \\
    \end{bmatrix} = 
    \begin{bmatrix} 0 & 0 & 0 \end{bmatrix}.
\]
Let $\Psi$ be the coefficient matrix in the above matrix equation.  
Since $\Psi$ has full row-rank, $X = O$ is the only solution, and $A$ has the $1$-SNIP.
\end{example}

The observant reader may notice that the arguments in Example~\ref{ex:verification} can be successfully applied to any matrix in $\mptn(K_{1,3})$ with $i = 1$. Indeed, columns $10, 11$ and $12$ of the resulting coefficient matrix are always independent.


The process used in Example~\ref{ex:verification} to check if a matrix has the SNIP can be generalized to any matrix
by constructing a coefficient matrix in a similar manner to $\Psi$. Such coefficient matrices have been used to study other strong properties and are usually referred to as \emph{verification matrices}. While verification matrices are a useful tool, they are not the focus of this paper. See \cite{IEPG2} for more information.




\section{SNIP and SAP}
\label{sec:snipsap}



The SNIP has a very similar definition to that of the strong Arnold property. 
A symmetric matrix $A$ is said to have the \emph{strong Arnold property} (SAP) if $X = O$ is the only symmetric matrix that satisfies the equations $A\circ X = O$, $I\circ X = O$ and $AX = O$.

\begin{remark}
\label{rem:snipsap}
If a matrix $A$ has the $i$-SNIP for some $i$, then $A$ has the SAP.  
\end{remark}

Motivated by this remark, we now investigate connections between the SNIP and the SAP.  
It is known that every nonsingular matrix $A$ has the SAP since $AX = O$ implies $X = O$.  The following proposition shows that an analogous result also holds for the SNIP, but by a different argument.

\begin{proposition}
\label{removedible}
Every nonsingular matrix has the $i$-SNIP for each index $i$.
\end{proposition}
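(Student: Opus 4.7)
The plan is to exploit nonsingularity of $A$ to reduce the condition $(AX)(i,:] = O$ to an explicit rank-one form for $X$, and then kill that rank-one piece using symmetry together with the zero-diagonal constraint $I \circ X = O$.

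First, suppose $X$ is a symmetric matrix satisfying the three SNIP hypotheses. The condition $(AX)(i,:] = O$ says that every row of $AX$ except possibly row $i$ is zero, so I can write $AX = \be_i\bv\trans$, where $\bv\trans$ is the $i$-th row of $AX$. Since $A$ is nonsingular, I left-multiply by $A^{-1}$ to obtain $X = \bw\bv\trans$, where $\bw := A^{-1}\be_i$. Crucially $\bw \neq \bzero$ because $A^{-1}$ is invertible and $\be_i \neq \bzero$.

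Next, I use the symmetry of $X$: the identity $X = X\trans$ becomes $\bw\bv\trans = \bv\bw\trans$. Picking any coordinate $k$ with $w_k \neq 0$ and comparing the $k$-th rows on both sides forces $\bv$ to be a scalar multiple of $\bw$, say $\bv = c\bw$. Therefore $X = c\,\bw\bw\trans$. The zero-diagonal condition $I \circ X = O$ now reads $c\,w_j^2 = 0$ for every $j$, and since some $w_j$ is nonzero we must have $c = 0$, whence $X = O$. This establishes the $i$-SNIP.

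There is no real obstacle here; the nonsingularity of $A$ is so strong that it immediately converts the partial annihilation condition into an explicit rank-one description of $X$, after which symmetry and zero diagonal leave no room to maneuver. It is worth noting in passing (though unnecessary for the proof) that the hypothesis $A \circ X = O$ plays no role in the argument -- the three remaining conditions already suffice. This mirrors the even easier classical fact that every nonsingular symmetric matrix automatically enjoys the SAP, consistent with Remark~\ref{rem:snipsap}.
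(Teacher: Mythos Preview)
Your proof is correct and follows essentially the same approach as the paper: both arguments use nonsingularity of $A$ to conclude that the columns of $X$ lie in the span of $A^{-1}\be_i$, then invoke symmetry to reduce $X$ to the form $c\,\bw\bw\trans$, and finally use $I\circ X = O$ to force $c=0$. Your observation that the hypothesis $A\circ X = O$ is never used is correct and worth noting.
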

\begin{proof}
Let $A$ be a symmetric nonsingular matrix and let $X$ by a symmetric matrix.  Suppose that $(AX)(i,:] = O$.  Then the columns of $AX$ all lie in $\vspan\{\be_i\}$. Thus, each column of $X$ lies in $\vspan\{A^{-1}\be_i\}$. Suppose $I\circ X = O$. Since $X$ is symmetric and has rank at most 1, $X = c(A^{-1}\be_i)\trans A^{-1}\be_i$ for some constant $c$.  However, such a matrix cannot have zero diagonal unless $c = 0$.  Therefore, $X=O$ and $A$ has the SNIP.
\end{proof}

It is also known that every symmetric matrix $A$ with $\nul(A) = 1$ has the SAP \cite{IEPGZF22}. However, this is not true for the SNIP, as we illustrate in the next example.  

\begin{example}
\label{ex:nul1}
Let 
\[
    A = \begin{bmatrix}
     0 & 1 & 1 \\
     1 & 0 & 0 \\ 
     1 & 0 & 0 \\
    \end{bmatrix} \text{ and }
    X = \begin{bmatrix}
     0 & 0 & 0 \\
     0 & 0 & 1 \\ 
     0 & 1 & 0 \\
    \end{bmatrix}.
\]
Then $A\circ X = O$, $I\circ X = O$, and $(AX)(1,:] = O$.  Therefore, $A$ has $\nul(A) = 1$, but $A$ does not have the $1$-SNIP. 
\end{example}

While $\nul(A) = 1$ does not guarantee the $i$-SNIP for any index $i$, Remark~\ref{rem:threecases} can be used to show that if $\nul(A) = 1$ there exists an index $i$ such that $A(i)$ is nonsingular.  

\begin{proposition}
\label{prop:pinvertible}
Let $A\in \msym$ and $i\in[n]$. If $A(i)$ is nonsingular, then $A$ has the $i$-SNIP.
\end{proposition}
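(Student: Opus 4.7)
The plan is to reduce to the block form of Remark~\ref{rem:threecases} and verify the SNIP conditions by direct computation. Without loss of generality assume $i = 1$, so that
\[
A = \begin{bmatrix} a & \bb\trans \\ \bb & C \end{bmatrix},
\]
where $C = A(1)$ is nonsingular. Suppose $X\in\msym$ satisfies $A\circ X = O$, $I\circ X = O$, and $(AX)(1,:] = O$. Since $I\circ X = O$, I can write $X$ as
\[
X = \begin{bmatrix} 0 & \by\trans \\ \by & Y \end{bmatrix},
\]
for some $\by\in\mathbb{R}^{n-1}$ and some symmetric $Y\in\msym[n-1]$ with $I\circ Y = O$.

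Next I would multiply these out. Block multiplication gives
\[
AX = \begin{bmatrix} \bb\trans\by & a\by\trans + \bb\trans Y \\ C\by & \bb\by\trans + CY \end{bmatrix},
\]
and the hypothesis $(AX)(1,:] = O$ says that rows $2,\ldots,n$ vanish, that is, $C\by = \bzero$ and $\bb\by\trans + CY = O$. Because $C$ is nonsingular, the first equation forces $\by = \bzero$, and substituting into the second yields $CY = O$, which in turn forces $Y = O$. Hence $X = O$, proving that $A$ has the $1$-SNIP.

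There is no real obstacle here; it is a direct block-matrix calculation, and notably the condition $A\circ X = O$ is not even required for the argument once the diagonal of $X$ is known to vanish.
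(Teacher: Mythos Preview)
Your proof is correct and essentially identical to the paper's: both reduce to the block form with $C=A(i)$ nonsingular, read off $C\by=\bzero$ and $\bb\by\trans+CY=O$ from $(AX)(1,:]=O$, and use invertibility of $C$ to force $\by=\bzero$ and then $Y=O$. Your closing remark that $A\circ X=O$ plays no role is accurate and is implicit in the paper's argument as well.
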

\begin{proof}
Suppose $A(i)$ is nonsingular. Let $X$ be a symmetric matrix that satisfies $A\circ X = O$, $I\circ X = O$, and $(AX)(i,:] = O$.  We may assume $i = 1$ and that $A$ is of the form in \eqref{eq:Apartition}, where $C = A(i)$ is nonsingular.  Then $(AX)(i,:] = O$ is equivalent to   
\[
    \left[\begin{array}{c|c}
     \bb & C
    \end{array}\right]
    \left[\begin{array}{c|c}
    0 & \by\trans \\
    \hline
    \by & Y
    \end{array}\right] 
    = 
    \left[\begin{array}{c|c}
    C\by & \bb\by\trans + CY
    \end{array}\right]
    = O,
    \quad \text{where} \quad
    X = 
    \left[\begin{array}{c|c}
    0 & \by\trans \\
    \hline
    \by & Y
    \end{array}\right].
\]
Note that the $(i,i)$-entry of $X$ is $0$ since $I\circ X = O$.  Observe that $C\by = \bzero$, which implies $\by = \bzero$.  Substituting into the equation above gives $CY = O$ and hence $Y = O$.  Therefore, $X = O$ and $A$ has the $i$-SNIP.
\end{proof}

For symmetric matrices $A$ and $B$, it is known that $A\oplus B$ has the SAP if and only if one of $A$ and $B$ has the SAP while the other is nonsingular.  As the next result illustrates, the SNIP possesses a similar property. 

\begin{proposition}
\label{prop:directsum}
Let $A\in\msym$, $i\in[n]$, and $B\in\msym[m]$.  Then the direct sum $A\oplus B$ has the $i$-SNIP if and only if $A$ has the $i$-SNIP and $B$ is nonsingular.
\end{proposition}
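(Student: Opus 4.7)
The plan is to partition any symmetric $X\in\msym[n+m]$ as
\[
X = \begin{bmatrix} X_{11} & X_{12} \\ X_{12}\trans & X_{22} \end{bmatrix},
\]
where $X_{11}\in\msym$, $X_{22}\in\msym[m]$, and $X_{12}$ is an $n\times m$ block, and then unpack the SNIP conditions for $A\oplus B$ into block form. Since $A\oplus B$ is block diagonal, $(A\oplus B)\circ X = O$ decouples into $A\circ X_{11} = O$ and $B\circ X_{22} = O$ with $X_{12}$ unrestricted, and $I\circ X = O$ forces $X_{11}$ and $X_{22}$ to have zero diagonal. Computing
\[
(A\oplus B)X = \begin{bmatrix} AX_{11} & AX_{12} \\ BX_{12}\trans & BX_{22} \end{bmatrix}
\]
and removing row $i\in[n]$ (which lies in the top block), the condition $((A\oplus B)X)(i,:] = O$ is equivalent to the four equations $(AX_{11})(i,:] = O$, $(AX_{12})(i,:] = O$, $BX_{12}\trans = O$, and $BX_{22} = O$.

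For the backward implication, I would assume $A$ has the $i$-SNIP and $B$ is nonsingular. Then $BX_{22} = O$ forces $X_{22} = O$ and $BX_{12}\trans = O$ forces $X_{12} = O$. The remaining equations on $X_{11}$ are precisely the $i$-SNIP conditions for $A$, so $X_{11} = O$ as well, and hence $X = O$.

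For the forward implication, assume $A\oplus B$ has the $i$-SNIP. To see that $A$ has the $i$-SNIP, take any symmetric $X_{11}$ witnessing the SNIP equations for $A$ and pad it as $X = X_{11}\oplus O$; this $X$ satisfies all four block equations above, so $X = O$ by hypothesis and hence $X_{11} = O$. To see that $B$ must be nonsingular, I argue by contrapositive: suppose $\bv\in\mathbb{R}^m$ is a nonzero null vector of $B$, and choose $\bu\in\mathbb{R}^n$ nonzero with $A\bu\in\vspan\{\be_i\}$. Such a $\bu$ always exists, via a short case split: take $\bu\in\nul(A)\setminus\{\bzero\}$ if $A$ is singular, and $\bu = A^{-1}\be_i$ if $A$ is nonsingular. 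Now set $X_{11} = O$, $X_{22} = O$, and $X_{12} = \bu\bv\trans$. Then $B\bv = \bzero$ and $X_{22} = O$ handle the equations $BX_{12}\trans = O$ and $BX_{22} = O$, while $(AX_{12})(i,:] = (A\bu)(i,:]\,\bv\trans = O$ because $A\bu$ is a scalar multiple of $\be_i$. Since $\bu$ and $\bv$ are both nonzero, $X_{12} = \bu\bv\trans\neq O$ and the resulting $X$ is a nonzero witness violating the $i$-SNIP of $A\oplus B$, a contradiction.

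The main obstacle is the construction of $\bu$ in the contrapositive step, which requires the brief case split on singularity of $A$; once the block decomposition of the SNIP equations is laid out, everything else reduces to straightforward matrix bookkeeping.
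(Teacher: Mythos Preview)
Your proof is correct and essentially identical to the paper's. You use the same block decomposition of $X$, the same reduction of the $i$-SNIP conditions to four block equations, and the same rank-one witness $X_{12}=\bu\bv\trans$ built from a nonzero $\bv\in\ker(B)$ and a nonzero $\bu$ with $A\bu\in\vspan\{\be_i\}$ obtained via the same case split on the singularity of $A$; the only cosmetic difference is that the paper proves $B$ nonsingular before concluding $A$ has the $i$-SNIP, whereas you do these in the opposite order.
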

\begin{proof}
Let $X_A\in\msym$, $X_B\in\msym[m]$ and
\[
    X = \begin{bmatrix}
    X_A & Y\trans \\
    Y & X_B
    \end{bmatrix}.
\]
Suppose that $A\circ X_A = I_n \circ X_A = O_n$ and $B\circ X_B = I_m\circ X_B = O_m$. Note that these equations hold if and only if $(A\oplus B)\circ X = I\circ X = O$. Also suppose that $(AX_A)(i,:] = O$, $(AY)(i,:] = O$, $BY = O$ and $BX_B = O$. Similarly, this second system of equations holds if and only if $((A\oplus B)X)(i,:] = O$ since
\[
    (A\oplus B)X = \begin{bmatrix}
     AX_A & AY\trans \\
     BY & BX_B
    \end{bmatrix}.
\]

Begin by assuming $A$ has the $i$-SNIP and $B$ is nonsingular.  Then $(AX_A)(i,:] = O$ implies $X_A = O$ since $A$ has the $i$-SNIP. Furthermore, $BY = O$ and $BX_B = O$ imply $Y = O$ and $X_B = O$ since $B$ is nonsingular. Therefore, $X = O$ and so $A\oplus B$ has the $i$-SNIP.  

Now assume $A\oplus B$ has the $i$-SNIP.  We first observe that there exists a nonzero vector $\ba$ such that $A\ba\in\vspan(\{\be_i\})$; namely, if $A$ is singular, then choose $\ba$ as a nonzero vector in $\ker(A)$, and  if $A$ is nonsingular, then choose $\ba$ as the vector such that $A\ba = \be_i$. Thus, $B$ is nonsingular, for otherwise $X_A = O$, $X_B = O$ and $Y= \bb\ba\trans$ would satisfy $(A\oplus B)\circ X = I\circ X = O$ and $((A\oplus B)X)(i,:] = O$, contradicting our assumption that $A\oplus B$ has the $i$-SNIP.
It follows from the equivalencies established in the first paragraph that $A$ has the $i$-SNIP.
\end{proof}

For the remainder of this section, we provide necessary and sufficient conditions for the SNIP in terms of the SAP.  

\begin{lemma}
\label{lem:snip2sap}
Let $A$ be an $n\times n$ symmetric matrix and $i\in[n]$. If $A$ has the $i$-SNIP, then both $A$ and $A(i)$ have the SAP.
\end{lemma}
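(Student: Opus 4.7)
The plan is to handle the two SAP conclusions separately, and both reduce to choosing the right test matrix.

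For $A$ itself, the deduction is essentially Remark~\ref{rem:snipsap}: any symmetric $X$ satisfying the SAP hypotheses $A\circ X = O$, $I\circ X = O$, and $AX = O$ automatically satisfies the weaker condition $(AX)(i,:] = O$, so the $i$-SNIP of $A$ immediately forces $X = O$.

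For $A(i)$, I would argue by a ``zero-border'' embedding. After permuting, take $i = 1$ and partition $A$ as in (\ref{eq:Apartition}), so that $A(i) = C$. Given a symmetric $Y\in\msym[n-1]$ satisfying the SAP hypotheses for $C$, namely $C\circ Y = O$, $I\circ Y = O$, and $CY = O$, the goal is to deduce $Y = O$. Define
\[
X \;=\; \begin{bmatrix} 0 & \bzero\trans \\ \bzero & Y \end{bmatrix}\in\msym.
\]
Because the first row and column of $X$ vanish, one immediately reads off $A\circ X = O$ and $I\circ X = O$ irrespective of $a$ and $\bb$. A block multiplication using (\ref{eq:Apartition}) gives
\[
AX \;=\; \begin{bmatrix} 0 & \bb\trans Y \\ \bzero & CY \end{bmatrix} \;=\; \begin{bmatrix} 0 & \bb\trans Y \\ \bzero & O \end{bmatrix},
\]
so the bottom $n-1$ rows vanish, i.e.\ $(AX)(1,:] = O$; the first row is permitted to be nonzero. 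Applying the $i$-SNIP of $A$ to $X$ now yields $X = O$ and hence $Y = O$, so $C = A(i)$ has the SAP.

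There is no real obstacle: the entire content is in recognizing that the $i$-SNIP tests only $(AX)(i,:] = O$ rather than all of $AX = O$, so the ``free'' $i$-th row of $AX$ can absorb the extraneous term $\bb\trans Y$ created by the embedding. That freedom is precisely why no information about $a$ or $\bb$ is needed, and it is also the structural reason why this argument specializes to removing a single index $i$ rather than a larger set.
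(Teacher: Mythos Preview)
Your proof is correct and follows essentially the same approach as the paper: the SAP for $A$ is immediate from the definition (Remark~\ref{rem:snipsap}), and for $A(i)$ you embed a candidate $Y$ as $X = \begin{bmatrix}0\end{bmatrix}\oplus Y$, check the $i$-SNIP hypotheses via the same block computation, and conclude $Y = O$. The only difference is presentational.
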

\begin{proof}
By definition, if $A$ has the $i$-SNIP, then $A$ has the SAP.

To see that $A(i)$ has the SAP, suppose $Y$ is a symmetric matrix such that $A(i)\circ Y = O$, $I\circ Y = O$, and $A(i)Y = O$. 
Without loss of generality, suppose $i = 1$ and let $X = \begin{bmatrix} 0 \end{bmatrix}\oplus Y$.  Then $A\circ X = O$ and $I\circ X = O$.  We may assume $A$ has the form in \eqref{eq:Apartition} and compute  
\[
    AX = \begin{bmatrix}
     0 & \bb\trans Y \\
     \bzero & CY
    \end{bmatrix}
    =
    \begin{bmatrix}
     0 & \bb\trans Y \\
     \bzero & A(i)Y
    \end{bmatrix}
    =
    \begin{bmatrix}
     0 & \bb\trans Y \\
     \bzero & O
    \end{bmatrix}.
\]
Since $A$ has the $i$-SNIP, $X = O$. Therefore, $Y = O$ and $A(i)$ has the SAP.
\end{proof}

The converse of Lemma~\ref{lem:snip2sap} is not true in general.  

\begin{example}
\label{ex:doubleSAPnoSNIP}
By direct computation, for 
\[
A= \begin{bmatrix} 
1 & 0 \\ 
0 & 0
\end{bmatrix},
\]
both the matrix $A$ and $A(1)$ have the SAP, but $A$ does not have the $1$-SNIP.

\end{example}

A simple but useful observation is the following. Since $AX$ and $(A + tE_{i,i})X$ only differ in their $i$-th row, $(AX)(i,:] = O$ if and only if $((A + tE_{i,i})X)(i,:] = O$. Thus we have the following useful lemma.

\begin{lemma}
\label{lem:diagperturb}
Let $A$ be a symmetric matrix.  Then for each $t\in\mathbb{R}$, $A$ has the $i$-SNIP if and only if $A + tE_{i,i}$ has the $i$-SNIP.
\end{lemma}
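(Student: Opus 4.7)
The plan is to show that the three defining conditions of the $i$-SNIP cut out the \emph{same} set of symmetric matrices $X$ for $A$ and for $A+tE_{i,i}$. Since ``$i$-SNIP'' amounts to this solution set being $\{O\}$, the two-way implication is then immediate.

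First I would fix a symmetric $X$ and show that the Hadamard and zero-diagonal conditions are unchanged. Note that $(A+tE_{i,i})\circ X = A\circ X + tE_{i,i}\circ X$, and $E_{i,i}\circ X$ is the matrix whose only possibly nonzero entry is the $(i,i)$-entry of $X$. The condition $I\circ X = O$ forces this $(i,i)$-entry to be $0$, so $(A+tE_{i,i})\circ X = A\circ X$. Thus, under $I\circ X=O$, the Hadamard conditions $A\circ X = O$ and $(A+tE_{i,i})\circ X = O$ are equivalent, and $I\circ X = O$ itself is the same condition on both sides.

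Next I would invoke the observation already stated in the text preceding the lemma: $E_{i,i}X$ has all of its nonzero entries confined to row $i$, so $(AX)(i,:]$ and $((A+tE_{i,i})X)(i,:] = (AX)(i,:] + t(E_{i,i}X)(i,:]$ coincide. Hence the row-deletion condition $(AX)(i,:] = O$ is equivalent to $((A+tE_{i,i})X)(i,:] = O$.

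Combining these two steps, the system defining the $i$-SNIP solution subspace for $A$ and the analogous system for $A+tE_{i,i}$ have identical solution sets in $\msym$. Therefore one system admits only $X=O$ if and only if the other does, which is the desired equivalence. There is no real obstacle here; the only care needed is to observe at the outset that the zero-diagonal condition on $X$ makes the rank-one perturbation $tE_{i,i}$ invisible to both the Hadamard product and the post-row-deletion product.
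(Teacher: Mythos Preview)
Your proof is correct and follows essentially the same approach as the paper, which notes (in the sentence immediately preceding the lemma) that $AX$ and $(A+tE_{i,i})X$ differ only in their $i$-th row. You additionally spell out why the Hadamard condition is unaffected under $I\circ X=O$, a detail the paper leaves implicit.
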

 
Recall that an index can be either downer, neutral, or upper as defined in Section~\ref{sec:preliminaries}. 
In the following, we provide equivalent conditions for the $i$-SNIP under the assumptions that $i$ is a downer, neutral, or upper index.  The following result shows that counterexamples for the converse of Lemma~\ref{lem:snip2sap} (as in Example \ref{ex:doubleSAPnoSNIP}) can only happen when $i$ is a neutral index.

\begin{theorem}
\label{thm:snipchar}
Let $A\in\msym$ and $i\in [n]$.  Then the following characterization holds.
\begin{enumerate}[label={{\rm(\alph*)}}]
\item If  $i$ is a downer index, then $A$ has the $i$-SNIP if and only if $A$ has the SAP.
\item If $i$ is a neutral index and $t$ is the unique value such that $i$ is a downer index of $A + tE_{i,i}$, then  $A$ has the $i$-SNIP if and only if $A + tE_{i,i}$ has the SAP.
\item If $i$ is an upper index, then $A$ has the $i$-SNIP if and only if $A(i)$ has the SAP.
\end{enumerate}
\end{theorem}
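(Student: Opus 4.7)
The plan is to dispatch the three cases via a common strategy: reduce the $i$-SNIP conclusion ($X=O$) to the SAP of either $A$ itself or of a suitable substitute. The forward directions are all essentially contained in Remark~\ref{rem:snipsap} and Lemma~\ref{lem:snip2sap}, so the work lies in the converses. The guiding observation is that $(AX)(i,:]=O$ says every row of $AX$ except possibly the $i$-th is zero, so whenever such an $X$ exists we may write $AX=\be_i\bv\trans$ for some vector $\bv$; each converse then boils down to forcing $\bv=\bzero$.

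For part (a), suppose $A$ has the SAP and $i$ is a downer index. Because $\nul(A)=\nul(A(i))+1$, there must exist a vector $\bu\in\ker(A)$ with $u_i\neq 0$; otherwise every null vector of $A$ would restrict to one of $A(i)$ and yield $\nul(A)\leq\nul(A(i))$. Left-multiplying $AX=\be_i\bv\trans$ by $\bu\trans$ annihilates the left side (since $\bu\trans A=\bzero\trans$) while producing $u_i\bv\trans$ on the right, so $\bv=\bzero$ and $AX=O$; the SAP of $A$ then forces $X=O$. Part (b) is an immediate corollary: Proposition~\ref{prop:unique_t_downer} provides the unique $t$ making $i$ a downer index of $A+tE_{i,i}$, Lemma~\ref{lem:diagperturb} transfers the $i$-SNIP between $A$ and $A+tE_{i,i}$, and part (a) applies to the latter matrix.

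For part (c), I would take $i=1$ without loss of generality, partition $A$ as in (\ref{eq:Apartition}) with $C=A(i)$, and correspondingly write $X=\begin{bmatrix}0 & \by\trans \\ \by & Y\end{bmatrix}$. The Hadamard conditions restrict to $C\circ Y=I\circ Y=O$, while $(AX)(i,:]=O$ translates to $C\by=\bzero$ together with $CY=-\bb\by\trans$. The upper hypothesis is precisely $\bb\notin\Col(C)$; since every column of $\bb\by\trans$ is a scalar multiple of $\bb$, this column-space obstruction forces $\by=\bzero$. The remaining equation $CY=O$, together with the SAP of $C$, then gives $Y=O$ and hence $X=O$. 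The main obstacle is part (c): one must exploit the upper hypothesis to kill $\by$ \emph{before} the SAP of $A(i)$ can be invoked on the residual block $Y$, so the delicate bookkeeping is in aligning the partitioned system with the three SAP-style constraints on $Y$.
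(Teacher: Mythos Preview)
Your proof is correct and follows essentially the same route as the paper: for the downer case you use a null vector $\bu$ with $u_i\neq 0$ where the paper uses the dual statement that row $i$ lies in the span of the remaining rows, and for the upper case both arguments exploit $\bb\notin\Col(C)$ to kill the $i$-th row of $X$ before invoking the SAP of $A(i)$. One small omission: the forward direction of (b) also needs Lemma~\ref{lem:diagperturb} (to pass the $i$-SNIP from $A$ to $A+tE_{i,i}$ before applying Remark~\ref{rem:snipsap}), which you invoke only for the converse.
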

\begin{proof}
By Lemma~\ref{lem:snip2sap}, if $A$ has the $i$-SNIP, then $A$ and $A(i)$ have the SAP.  Moreover, Lemma~\ref{lem:diagperturb} implies that if $A$ has the $i$-SNIP, then $A + tE_{i,i}$ has $i$-SNIP and consequently the SAP as well.  Thus, we have obtained the forward direction in all three cases.

We now prove the backward direction for each of the three cases. Let $X\in\msym$ such that $A\circ X = O$, $I\circ X = O$, and $(AX)(i,:] = O$.

Begin by assuming $i$ is a downer index and that $A$ has the SAP. By Remark~\ref{rem:threecases} the $i$-th row of $A$ is a linear combination of the remaining rows of $A$.  Since $(AX)(i,:] = O$ the columns of $X$ are orthogonal to the rows of $A(i,:]$ and hence orthogonal to the $i$-th row of $A$. This implies $AX = O$. Since $A$ has the SAP, $X = O$ and so $A$ has the $i$-SNIP.

Now assume $i$ is a neutral index and $A + tE_{i,i}$ has the SAP, where $t$ is the unique value such that $i$ is a downer index of $A + tE_{i,i}$. By the previous case, $A + tE_{i,i}$ has the $i$-SNIP.  Consequently, $A$ has the $i$-SNIP by Lemma~\ref{lem:diagperturb}.

Finally, assume $i$ is an upper index and $A(i)$ has the SAP.  Without loss of generality, we may assume $i = 1$ and that $A$ is of the form in \eqref{eq:Apartition}.  By Remark~\ref{rem:threecases}, $\bb$ is not a linear combination of the columns of $C$.  This implies that every vector $\bx$ satisfying $\begin{bmatrix} \bb & C \end{bmatrix}\bx = \bzero$ has its $i$-th entry equal to zero. Since 
\[
    (AX)(i,:] = \begin{bmatrix} \bb & C \end{bmatrix} X = O,
\]
the $i$-th row of $X$ is zero and $X = \begin{bmatrix} 0 \end{bmatrix}\oplus X(i)$.  Consequently, $ A(i)X(i) = O$.
Now $X(i)$ is a symmetric matrix such that $A(i)\circ X(i) = O$, $I\circ X(i) = O$, and $A(i)X(i) = O$.  Since $A(i)$ has the SAP, $X(i) = O$ and $X = \begin{bmatrix} 0 \end{bmatrix}\oplus X(i) = O$. Thus $A$ has the $i$-SNIP.
\end{proof}

We conclude this section by observing that Lemma~\ref{lem:diagperturb} and Proposition~\ref{prop:unique_t_downer} give a version of Corollary~\ref{cor:kk1} that preserves the SNIP.
\begin{corollary}
\label{cor:kk1withSNIP}
Let $G$ be a graph and $i\in V(G)$. Then $(G,i)$ allows the nullity pair  $(k,k)$ with the SNIP if and only if $(G,i)$ allows the nullity pair $(k+1, k)$ with the SNIP.
\end{corollary}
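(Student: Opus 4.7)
The plan is to apply Proposition~\ref{prop:unique_t_downer} and Lemma~\ref{lem:diagperturb} in tandem in both directions, using the fact that the diagonal perturbation $A\mapsto A+tE_{i,i}$ preserves membership in $\mptn(G)$, does not change the submatrix $A(i)$, and preserves the $i$-SNIP. This is essentially the ``SNIP-aware'' version of the Corollary~\ref{cor:kk1} argument: every step of the earlier proof already respects the $i$-SNIP, so it just has to be restated carefully.

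For the forward direction, I would start from a witness $A\in\mptn(G)$ for the pair $(k,k)$ with the $i$-SNIP. The equality $\nul(A)=\nul(A(i))=k$ places us in case~(2) of Remark~\ref{rem:threecases}, so $i$ is a neutral index of $A$. Proposition~\ref{prop:unique_t_downer} then supplies a (unique) $t\neq 0$ for which $i$ is a downer index of $A':=A+tE_{i,i}$; since $A'(i)=A(i)$, the matrix $A'$ realizes the pair $(\nul(A'(i))+1,\nul(A'(i)))=(k+1,k)$. Because $A'\in\mptn(G)$ and Lemma~\ref{lem:diagperturb} transfers the $i$-SNIP from $A$ to $A'$, we conclude that $(G,i)$ allows $(k+1,k)$ with the SNIP.

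For the backward direction, I would reverse this construction. Given $A\in\mptn(G)$ witnessing $(k+1,k)$ with the $i$-SNIP, $i$ is a downer index of $A$ by case~(3) of Remark~\ref{rem:threecases}. Pick any $s\neq 0$ and set $B:=A-sE_{i,i}$; then $B+sE_{i,i}=A$ has $i$ as a downer index, so the ``if'' implication of Proposition~\ref{prop:unique_t_downer} applied to $B$ forces $i$ to be a neutral index of $B$. Since $B(i)=A(i)$, $B$ realizes the pair $(k,k)$, and Lemma~\ref{lem:diagperturb} again promotes the $i$-SNIP of $A$ to $B$.

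I do not anticipate a real obstacle: both directions are short applications of the two cited results, and the only bookkeeping is to note that $(A+tE_{i,i})(i)=A(i)$ (so nullities of the deleted principal submatrix are identical) and that only off-diagonal entries constrain membership in $\mptn(G)$ (so the perturbation stays in the pattern class).
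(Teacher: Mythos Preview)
Your proposal is correct and follows essentially the same approach as the paper: the paper's proof is a one-line observation that Proposition~\ref{prop:unique_t_downer} and Lemma~\ref{lem:diagperturb} together upgrade Corollary~\ref{cor:kk1} to a SNIP-preserving statement, and you have spelled out exactly those details. Your backward direction (choosing an arbitrary $s\neq 0$ and invoking the converse of Proposition~\ref{prop:unique_t_downer}) is a valid and slightly more explicit way of phrasing what the paper leaves implicit.
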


\section{Nullity pairs for families of rooted graphs}
\label{sec:inpp}

In this section we study the realizable nullity pairs for various families of rooted graphs, including the complete graphs $K_n$, the cycle graphs $C_n$, and the path graphs $P_n$. It is of particular interest when there exists a realization with the SNIP. For a vertex transitive graph $G$, e.g., $K_n$ and $C_n$, we write $(G,i)$ without specifying the vertex $i$. Similarly, when the context is clear, we write $(G,\mbox{leaf})$ if the root is a leaf, i.e., the root is a vertex of degree 1. We start with two preliminary results that guarantee certain nullity pairs are realizable with the SNIP for almost every rooted graph.  

\begin{proposition}
\label{prop:010}
Let $(G,i)$ be a rooted graph. Then $(G,i)$ allows the nullity pairs $(0,0)$ and $(1,0)$ with the SNIP. 
\end{proposition}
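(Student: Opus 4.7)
The plan is to prove the nullity pair $(0,0)$ first, and then get $(1,0)$ for free by invoking Corollary~\ref{cor:kk1withSNIP}. For $(0,0)$, the task reduces to exhibiting a matrix $A \in \mptn(G)$ such that both $A$ and $A(i)$ are nonsingular; the SNIP will then be automatic from Proposition~\ref{removedible}, which says that every nonsingular symmetric matrix has the $i$-SNIP for every $i$.

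To produce such an $A$, I would fix an arbitrary base matrix $A_0 \in \mptn(G)$ (with any valid off-diagonal pattern and, say, zero diagonal) and then consider the perturbation $A = A_0 + D$ as $D$ ranges over real diagonal matrices with entries $d_1,\dots,d_n$. The determinants $\det(A)$ and $\det(A(i))$ are polynomials in $d_1,\dots,d_n$. The key point is that each is a \emph{nonzero} polynomial: expanding the Leibniz formula for $\det(A)$, the identity permutation contributes the monomial $d_1 d_2 \cdots d_n$ with coefficient $1$, and no other term of the expansion produces this same monomial, so it survives. The same argument applied to $A(i)$ shows that $\det(A(i))$ contains the monomial $\prod_{j \neq i} d_j$ with coefficient $1$ and is therefore also nonzero as a polynomial. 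Thus the union of their zero sets is a proper algebraic subvariety of $\mathbb{R}^n$, and one can pick a diagonal $D$ avoiding it. For this $A$ both $A$ and $A(i)$ are nonsingular, giving $\nul(A) = \nul(A(i)) = 0$, i.e., the $i$-nullity pair $(0,0)$; and Proposition~\ref{removedible} ensures $A$ has the $i$-SNIP.

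Finally, applying Corollary~\ref{cor:kk1withSNIP} with $k=0$, the fact that $(G,i)$ allows $(0,0)$ with the SNIP immediately implies that $(G,i)$ also allows $(1,0)$ with the SNIP.

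I do not expect any significant obstacle here. The only thing that needs a moment's thought is verifying that the two determinants are nonzero polynomials in the diagonal perturbation variables; the leading-monomial observation in the Leibniz expansion handles this cleanly. Everything else is a direct quotation of results already proved earlier in the paper.
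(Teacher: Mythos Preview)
Your proof is correct and follows the same strategy as the paper: exhibit a matrix in $\mptn(G)$ with both $A$ and $A(i)$ nonsingular, then invoke the SNIP results from Section~\ref{sec:snipsap}. The paper streamlines the first step by simply taking $A_0 + \lambda I$ for any $A_0 \in \mptn(G)$ and $\lambda$ large (so that both $A_0 + \lambda I$ and its principal submatrix are positive definite), rather than a generic diagonal perturbation, and obtains $(1,0)$ via Corollary~\ref{cor:kk1} together with Proposition~\ref{prop:pinvertible}---which is equivalent to your appeal to Corollary~\ref{cor:kk1withSNIP}.
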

\begin{proof}
Let $A\in\mptn(G)$. For $\lambda$ large enough $A + \lambda I$ has $i$-nullity pair $(0,0)$. By Corollary~\ref{cor:kk1} and Proposition~\ref{prop:pinvertible}, both $(0,0)$ and $(1,0)$ are realizable for $(G,i)$ with the SNIP.
\end{proof}

\begin{proposition}
\label{prop:01}
Let $(G,i)$ be a rooted graph such that  $i$ is not an isolated vertex. Then $(G,i)$ allows the nullity pair $(0,1)$ with the SNIP.
\end{proposition}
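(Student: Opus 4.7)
The plan is to build a matrix $A\in\mptn(G)$ with $\nul(A)=0$ and $\nul(A(i))=1$ and then invoke Theorem~\ref{thm:snipchar}(c) to obtain the $i$-SNIP. Indeed, the nullity pair $(0,1)$ forces $i$ to be an upper index, so that theorem reduces verifying the $i$-SNIP to the statement that $A(i)$ has the SAP; this is automatic because $A(i)$ will have nullity $1$ (and a matrix of nullity $1$ is known to have the SAP).

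Concretely, since $i$ is not isolated, pick a neighbor $j$ of $i$. Let $H=G-i$ and $H'=H-j$. First choose $B'\in\mptn(H')$ nonsingular (if $V(H')\neq\emptyset$, take any matrix in $\mptn(H')$ and shift its diagonal by a sufficiently large multiple of the identity; if $V(H')=\emptyset$, skip this step and take $B$ to be the $1\times 1$ zero matrix). Let $\bc$ be the column indexed by $V(H')$ that is supported on $N_H(j)\cap V(H')$ with arbitrary nonzero entries there. Now form
\[
B=\begin{bmatrix}\bc\trans B'^{-1}\bc & \bc\trans\\ \bc & B'\end{bmatrix}\in\mptn(H),
\]
with $j$ indexing the first row and column. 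A Schur-complement calculation shows $\nul(B)=1$ and that $\ker(B)$ is spanned by the vector $\bv$ defined by $v_j=1$ and $\bv|_{V(H')}=-B'^{-1}\bc$.

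Next, border $B$ by a row and column for $i$ to obtain
\[
A=\begin{bmatrix}0 & \bb\trans\\ \bb & B\end{bmatrix}\in\mptn(G),
\]
where $\bb\in\R^{|V(H)|}$ has nonzero entries precisely on $N_G(i)$. By Remark~\ref{rem:threecases}(1), $i$ is an upper index of $A$ iff $\bb\notin\Col(B)$, equivalently $\bv\trans\bb\neq 0$. Since $v_j=1$ and $j\in N_G(i)$, I would set $b_j=1$ and choose $|b_k|$ small but nonzero for each $k\in N_G(i)\setminus\{j\}$, so that $\bv\trans\bb$ remains close to $1$. Then $\nul(A)=\nul(B)-1=0$ and $\nul(A(i))=\nul(B)=1$, giving the nullity pair $(0,1)$. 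Because $A(i)=B$ has nullity $1$, it has the SAP, and Theorem~\ref{thm:snipchar}(c) delivers the $i$-SNIP for $A$.

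The main obstacle is the hidden condition that the null vector of $A(i)$ must not be orthogonal to every admissible $\bb$: if the null vector of a would-be $B\in\mptn(H)$ of nullity $1$ happens to vanish on all of $N_G(i)$, then every valid $\bb$ automatically sits in $\Col(B)$ and $i$ is pushed down to a neutral index, producing the nullity pair $(1,1)$ rather than $(0,1)$. The bordered Schur-complement construction above is designed precisely to rule out this failure by forcing the null vector of $B$ to take the value $1$ at the prescribed neighbor $j$ of $i$.
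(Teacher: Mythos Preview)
Your argument is correct, but it differs from the paper's proof in both the construction of $A(i)$ and the verification of the SNIP.

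The paper takes a connected component $H_1$ of $G-i$ containing a neighbor of $i$, uses its Laplacian $B_1$ (whose column space is exactly the zero-sum vectors), and for the remaining vertices uses any nonsingular $B_2\in\mptn(H_2)$. Since the $(0,1)$-vector $\bb_1$ recording the adjacencies of $i$ into $H_1$ has positive sum, it automatically lies outside $\Col(B_1)$, forcing $i$ to be upper. The SNIP is then obtained in one line from Proposition~\ref{removedible}, since $\nul(A)=0$ means $A$ is nonsingular.

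Your route instead engineers $A(i)=B$ via a Schur-complement bordering so that the unique null vector of $B$ has the value $1$ at a chosen neighbor $j$ of $i$, and then perturbs the remaining entries of $\bb$ to keep $\bv\trans\bb$ near $1$. This is a legitimate alternative: it trades the structural fact about Laplacians for explicit control of $\ker(B)$. Your SNIP step through Theorem~\ref{thm:snipchar}(c) and the SAP of nullity-$1$ matrices is valid, but note that you could shortcut it entirely: once $\nul(A)=0$, Proposition~\ref{removedible} gives the $i$-SNIP immediately, with no need to analyze $A(i)$ further. The paper's Laplacian construction is cleaner in that it avoids the smallness argument on the entries of $\bb$, while yours has the mild advantage of making the null vector of $A(i)$ completely explicit.
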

\begin{proof}
Without loss of generality, assume $i = 1$. We may write $G - i$ as the disjoint union of $H_1$ and $H_2$ (possibly empty)  such that $i$ is adjacent to a vertex in $H_1$ and $H_1$ is connected.  Let $B_1$ be the Laplacian matrix of $H_1$ and, if $H_2$ is nonempty, let $B_2$ be a nonsingular matrix in $\mptn(H_2)$. Then there exists a matrix $A\in \mptn(G)$ of the form  
\[
    A = \begin{bmatrix}
    a & \bb_1\trans & \bb_2\trans \\
    \bb_1 & B_1 & O \\
    \bb_2 & O & B_2
    \end{bmatrix},
\]
where $\bb_1$ is the appropriately chosen $(0,1)$-vector. As $H_1$ is connected, the column space of $B_1$
consists of all vectors whose entries sum to $0$.

Since $i$ is adjacent to some vertex in $H_1$,
$\bb_1$ is not in the column space of $B_1$. 
By construction,  $\nul(A(i)) = 1$.  Since $\bb_1\notin\Col(B_1)$, Remark~\ref{rem:threecases} implies $\nul(A) = 0$. Proposition~\ref{removedible} implies $A$ has the $i$-SNIP.
\end{proof}



Since we are primarily interested in connected graphs and every connected graph other than $K_1$ allows the nullity pairs $(0,0), (1,0)$ and $(0,1)$ with the SNIP, we shall refer to these nullity pairs as \emph{trivial}.

All of the proofs in the remainder of this section rely on known values of the maximum nullity for specific graphs. We refer the reader to \cite[Chapter 2]{IEPGZF22} for a review of this information.


\begin{proposition}
\label{prop:Kn}
Let $n\geq 2$ and $k,\ell\geq 0$ be integers. Then $(K_n,i)$ allows the nullity pair $(k,\ell)$ with the SNIP if and only if $|k - \ell| \leq 1$ and $\ell \leq n - 2$, or $(k,\ell) = (0,1)$. Moreover, every matrix $A\in\mptn(K_n)$ has the $i$-SNIP. 
\end{proposition}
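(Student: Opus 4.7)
The plan splits into three distinct pieces: the ``moreover'' claim, necessity of the conditions, and sufficiency.

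First I would dispose of the ``moreover'' claim. Since every off-diagonal entry of a matrix $A\in\mptn(K_n)$ is nonzero, the constraints $A\circ X=O$ and $I\circ X=O$ alone force every symmetric $X$ to vanish. Hence every $A\in\mptn(K_n)$ automatically has the $i$-SNIP, and in the remainder of the argument it suffices to realize the nullity pair; the SNIP comes for free.

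Next I would handle necessity. Cauchy interlacing at the eigenvalue $0$ gives $|k-\ell|\leq 1$. For $n\geq 3$, the matrix $A(i)$ lies in $\mptn(K_{n-1})$ and therefore has a nonzero off-diagonal entry, so $A(i)\neq O$ and $\ell\leq n-2$. For $n=2$ there is one extra case: $A(i)$ is $1\times 1$ and may equal $[0]$, giving $\ell=1$. Writing $A=\begin{bmatrix} a & b \\ b & 0\end{bmatrix}$ with $b\neq 0$ forces $\det A=-b^2\neq 0$, so $\nul(A)=0$; this explains the exceptional pair $(0,1)$.

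For sufficiency I would split the remaining pairs into downer/neutral and upper families. By Corollary~\ref{cor:kk1withSNIP}, realizing $(k+1,k)$ for $0\leq k\leq n-2$ automatically yields $(k,k)$, so it suffices to exhibit one matrix for the downer side and one for the upper side. For the downer pair $(k+1,k)$ I would choose a generic $n\times(n-k-1)$ matrix $V$ and set $A=VV\trans$. Genericity ensures (i) $\rank V=n-k-1$, so $\nul(A)=k+1$; (ii) every off-diagonal entry $\inp{v_j}{v_m}$ is nonzero, so $A\in\mptn(K_n)$; and (iii) removing row $i$ of $V$ does not drop the rank (since $n-1\geq n-k-1$), giving $\nul(A(i))=k$. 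For the upper pair $(k,k+1)$ with $0\leq k\leq n-3$ I would first build $C\in\mptn(K_{n-1})$ with $\nul(C)=k+1$ by the same $VV\trans$ trick (valid because $k+1\leq n-2=M(K_{n-1})$), then pick $\bb\in\mathbb{R}^{n-1}$ with all entries nonzero and $\bb\notin\Col(C)$ (possible since $\Col(C)$ is a proper subspace of $\mathbb{R}^{n-1}$ and the ``no zero entry'' set is Zariski-open), and set $A=\begin{bmatrix} 0 & \bb\trans \\ \bb & C\end{bmatrix}$. By Remark~\ref{rem:threecases} this gives $A\in\mptn(K_n)$ with $i$-nullity pair $(k,k+1)$. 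The trivial pair $(0,1)$ needed only when $n=2$ is supplied by Proposition~\ref{prop:01}.

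The main point requiring care is the genericity argument underlying both constructions: one must simultaneously avoid the finitely many Zariski-closed loci where a relevant rank drops or an off-diagonal inner product vanishes. Since each such locus is a proper subvariety of the parameter space, their union is still proper and a generic choice avoids them all; for an explicit rendering one could let the rows of $V$ be the Vandermonde vectors $(1,t_j,t_j^2,\ldots,t_j^{n-k-2})$ for distinct generic $t_j$, whose pairwise inner products equal $\frac{1-(t_jt_m)^{n-k-1}}{1-t_jt_m}$ and hence are nonzero for generic parameters.
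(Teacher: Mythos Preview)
Your proof is correct and complete. The main difference from the paper lies in the sufficiency constructions. The paper builds explicit matrices: for the downer/neutral pairs it takes the adjacency matrix of $K_{n-1}$, adds $1$ to the first $m+1$ diagonal entries to obtain $B_{m,n}\in\mptn(K_{n-1})$ with $\nul(B_{m,n})=m$, and then borders by the entrywise-positive vector $B_{m,n}\bone$ to get $\widehat{B}_{m,n}\in\mptn(K_n)$ with $i$-nullity pair $(m+1,m)$; for the upper pair $(\ell-1,\ell)$ it observes that $\widehat{B}_{\ell-1,n-1}$ has the nowhere-zero null vector $\begin{bmatrix}1\\-\bone\end{bmatrix}$ and borders by that. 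Your generic Gram-matrix route $A=VV\trans$ is equally valid and arguably more portable---it would adapt to other dense patterns with minimal change---at the cost of the genericity bookkeeping you flag in the final paragraph. The Vandermonde specialization you offer makes the downer side fully explicit if one wishes; the paper's construction is explicit throughout and avoids any genericity appeal. On the necessity side you give a slightly more self-contained argument (a nonzero off-diagonal entry forces $\rank A(i)\geq 1$, hence $\ell\leq n-2$) where the paper simply invokes $M(K_{n-1})=n-2$.
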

\begin{proof}
Without loss of generality, we may assume $i = 1$. By Propositions \ref{prop:010} and \ref{prop:01}, $(K_n,i)$ allows $(0,0), (1,0)$ and $(0,1)$ with the SNIP. Since $M(K_2) = 1$, the only remaining possibility for $n=2$ is the nullity pair $(1,1)$.  However, $(1,1)$ not realizable for any root since Corollary~\ref{cor:kk1} would imply $(2,1)$ is realizable, violating $M(K_2) = 1$.  This proves the case of $n = 2$, so assume $n\geq 3$.

Suppose that $(K_n,i)$ allows the nullity pair $(k,\ell)$ with the SNIP. As always, $|k-\ell|\leq 1$. Since $M(K_n - i) = n-2$ it follows that $\ell \leq n-2$.

Observe that every matrix $A\in \mptn(K_n)$ has the $i$-SNIP since $A\circ X = O$ and $I\circ X = O$ imply $X = O$. Thus, to establish the remaining direction, it suffices to exhibit matrices with the desired $i$-nullity pairs. For $1 \leq m \leq n - 2$ let $B_{m,n}\in\mptn(K_{n-1})$ be the matrix obtained from the adjacency matrix of $K_{n-1}$ by adding $1$ to the first $m+1$ diagonal entries. Observe that $\nul(B_{m,n}) = m$ and $B_{m,n} \bone$ is entrywise positive. Let 
\[
\widehat{B}_{m,n} = 
\begin{bmatrix} 
\bone\trans B_{m,n}\bone & (B_{m,n}\bone)\trans \\
B_{m,n}\bone & B_{m,n}
\end{bmatrix}.
\] 
By construction, the matrix $\widehat{B}_{m,n} \in \mptn(K_n)$ and $\nul(\widehat{B}_{m,n}) = m + 1$. 

Now suppose $|k - \ell| \leq 1$ and $1\leq\ell \leq n - 2$. Then $k \in \{ \ell -1, \ell, \ell + 1\}$. Since $\widehat{B}_{\ell,n}$
has $i$-nullity pair $(\ell+1,\ell)$,  Corollary~\ref{cor:kk1} implies $(K_n,i)$ allows $(\ell+1,\ell)$ and $(\ell,\ell)$. It remains to show that $(\ell - 1,\ell)$ is realizable. Observe that
\[
\by = \begin{bmatrix} 1 \\ -\bone \end{bmatrix}\in\ker(\widehat{B}_{\ell-1,n-1})
\]
and that $\by$ does not contain any zero entries. Thus
\[
\begin{bmatrix}
0 & \by\trans \\
\by & \widehat{B}_{\ell-1,n-1}
\end{bmatrix}
\]
is in $\mptn(K_n)$ and has $i$-nullity pair $(\ell - 1,\ell)$, as required.
\end{proof}


\begin{proposition}
\label{prop:Cn}
Let $n\geq 3$. Then the nontrivial nullity pairs allowed by $(C_n, i)$ with the SNIP are precisely $(1,1)$ and $(2,1)$. Moreover, every matrix in $\mptn(C_n)$ has the $i$-SNIP.
\end{proposition}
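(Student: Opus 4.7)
The plan is to prove the two claims in reverse order: first show that every matrix $A\in\mptn(C_n)$ has the $i$-SNIP, then use this together with the known maximum nullities $M(C_n)=2$ and $M(P_{n-1})=1$ to pin down exactly which nontrivial pairs are realized.

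For the SNIP claim, assume without loss of generality $i=1$ and label the vertices $1,2,\ldots,n$ around the cycle, so that row $j$ of $A$ has its nonzero entries in columns $j-1,j,j+1$ (indices mod $n$). Let $X$ be a symmetric matrix with $A\circ X=O$, $I\circ X=O$, and $(AX)(1,:]=O$. The hadamard conditions immediately force $x_{j,j}=0$ and $x_{j,j+1}=0$ for all $j$ (including $x_{1,n}=0$). The core argument is a column-by-column zero-propagation: fix a column $k$ and look at $(AX)_{j,k}=a_{j,j-1}x_{j-1,k}+a_{j,j}x_{j,k}+a_{j,j+1}x_{j+1,k}=0$ for each $j\in\{2,\ldots,n\}$. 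Starting from the three guaranteed zeros $x_{k-1,k},x_{k,k},x_{k+1,k}$, the equation indexed by $j=k+1$ forces $x_{k+2,k}=0$; then $j=k+2$ forces $x_{k+3,k}=0$; and so on walking forward up to $j=n-1$. Similarly, walking backward from $j=k-1$ down to $j=2$ kills $x_{k-2,k},x_{k-3,k},\ldots,x_{1,k}$. Column $k=1$ uses only forward propagation, column $k=n$ only backward propagation, and the remaining missing equation ($j=n$ or $j=1$) is automatically satisfied because all entries it involves are already known to vanish. Hence $X=O$.

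For the realizability of nullity pairs, the constraint $|k-\ell|\le 1$ combined with $k\le M(C_n)=2$ and $\ell\le M(C_n-i)=M(P_{n-1})=1$ leaves only the trivial pairs $(0,0),(1,0),(0,1)$ and the two candidates $(1,1)$ and $(2,1)$. To realize $(2,1)$, take $A=\mathrm{Adj}(C_n)-\lambda_1 I$, where $\lambda_1=2\cos(2\pi/n)$. Since the spectrum of $\mathrm{Adj}(C_n)$ is $\{2\cos(2\pi k/n):k=0,\ldots,n-1\}$ and $\lambda_1$ arises from both $k=1$ and $k=n-1$, it is a double eigenvalue for every $n\ge 3$ (in particular $\lambda_1=-1$ for $n=3$ and $\lambda_1=0$ for $n=4$), so $\nul(A)=2$. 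Then $A(i)\in\mptn(P_{n-1})$, and interlacing gives $\nul(A(i))\ge \nul(A)-1=1$, while $\nul(A(i))\le M(P_{n-1})=1$, so $\nul(A(i))=1$. Thus $(2,1)$ is realized, and $(1,1)$ follows immediately from Corollary~\ref{cor:kk1}. Since every matrix in $\mptn(C_n)$ has the $i$-SNIP by the first part, both realizations automatically have the SNIP, completing the proof.

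The main technical obstacle is the zero-propagation argument: one has to handle the circular indexing carefully so that the chains of equations ``wrap around'' in a way that uses only the available rows $j=2,\ldots,n$, and verify that the endpoint equation is either consumed by the chain (interior columns) or becomes vacuous (boundary columns $k=1$ and $k=n$). The other steps are essentially bookkeeping once the classical facts $M(C_n)=2$, $M(P_{n-1})=1$, and the spectrum of the cycle are invoked.
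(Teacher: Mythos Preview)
Your proof is correct, and both halves work as stated; the zero-propagation argument for the SNIP is sound once the boundary columns $k=1$ and $k=n$ and the empty forward/backward ranges (when $k=n-1$ or $k=2$) are handled as you indicate.

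The route differs from the paper's in the SNIP half. The paper invokes Theorem~\ref{thm:snipchar}: since every matrix in $\mptn(C_n)$ and every matrix in $\mptn(P_{n-1})$ is known to have the SAP (citing \cite{Zsap}), the three cases of that theorem immediately give the $i$-SNIP for every $A\in\mptn(C_n)$. Your argument instead bypasses the SAP characterization entirely and verifies $X=O$ by a direct tridiagonal recursion on the columns of $X$. Your approach is more self-contained---it needs no external SAP result and no case split on downer/neutral/upper---while the paper's approach is shorter and illustrates how Theorem~\ref{thm:snipchar} converts SNIP questions into already-catalogued SAP facts. For the realizability half the two proofs are essentially the same (bound via $M(C_n)=2$, $M(P_{n-1})=1$, then realize $(2,1)$ and deduce $(1,1)$ from Corollary~\ref{cor:kk1}); the paper simply appeals to the existence of a nullity-$2$ matrix rather than writing down $\mathrm{Adj}(C_n)-2\cos(2\pi/n)I$ explicitly.
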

\begin{proof}
Observe that $C_n - i = P_{n-1}$. Since $M(C_n) = 2$ and $M(P_{n-1}) = 1$, there exists a matrix in $\mptn(C_n)$ with $i$-nullity pair $(2,1)$. Thus, $(C_n,i)$ allows $(2,1)$, and by Corollary~\ref{cor:kk1}, $(C_n,i)$ allows $(1,1)$. Note that the only possible remaining nullity pairs are trivial. Since every matrix in $\mptn(C_n)$ and $\mptn(P_{n-1})$ has the SAP \cite[Theorem~2.6]{Zsap}, every matrix in $\mptn(C_n)$ has the $i$-SNIP by Theorem~\ref{thm:snipchar}.
\end{proof}


\begin{proposition}\label{prop:paths}
Let $n\geq 2$. Then $(P_n,\mbox{leaf})$ allows the nullity pair $(k,\ell)$ if and only if $(k,\ell)$ is trivial. For $i\in V(P_n)$ that is not a leaf, the only nontrivial nullity pair allowed by $(P_n,i)$ is $(1,2)$. Moreover, a matrix in $\mptn(P_n)$ has the $i$-SNIP for any index $i$ if and only if its $i$-nullity pair is trivial.
\end{proposition}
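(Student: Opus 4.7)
The plan is to first determine the possible nullity pairs using the maximum nullity bound $M(P_n) = 1$ and Corollary~\ref{cor:kk1}, then exhibit a matrix realizing $(1,2)$ at an internal root, and finally characterize the $i$-SNIP through the case analysis of Theorem~\ref{thm:snipchar}.

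First I would enumerate the candidate pairs. Since $|k - \ell| \leq 1$ and $k \leq M(P_n) = 1$, every candidate has $k \leq 1$. If $i$ is a leaf, then $P_n - i = P_{n-1}$, so $\ell \leq M(P_{n-1}) = 1$, leaving the candidates $(0,0), (0,1), (1,0), (1,1)$. If $i$ is an internal vertex, then $P_n - i = P_a \cup P_b$ with $a + b = n - 1$ and $a, b \geq 1$, so $\ell \leq M(P_a \cup P_b) = 2$, adding the candidate $(1,2)$. In either case, Corollary~\ref{cor:kk1} eliminates $(1,1)$: a realization of $(1,1)$ would produce one of $(2,1)$, contradicting $M(P_n) = 1$. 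The three trivial pairs are always realized by Propositions~\ref{prop:010} and~\ref{prop:01}, so only the realization of $(1,2)$ at an internal root remains.

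For that realization I would take
\[
A = \begin{bmatrix} a & \bb\trans \\ \bb & C_1 \oplus C_2 \end{bmatrix},
\]
where $C_1 \in \mptn(P_a)$ and $C_2 \in \mptn(P_b)$ are chosen with $\nul(C_1) = \nul(C_2) = 1$; such matrices exist since $M(P_j) = 1$, e.g., by subtracting an eigenvalue of multiplicity one from any matrix in $\mptn(P_j)$. The zero-nonzero pattern forces $\bb$ to be supported only at the endpoint of each of $P_a$ and $P_b$ adjacent to $i$. A standard three-term recurrence argument shows that any nonzero vector in the kernel of a tridiagonal matrix with nonzero off-diagonal entries must have a nonzero entry at each endpoint; hence $\bb$ is not orthogonal to $\ker(C_1 \oplus C_2)$, so $\bb \notin \Col(C_1 \oplus C_2)$. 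By Remark~\ref{rem:threecases}, $i$ is upper, so the $i$-nullity pair of $A$ is $(1,2)$.

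For the SNIP characterization I would apply Theorem~\ref{thm:snipchar} in each of the downer, neutral, and upper sub-cases, using that every matrix in $\mptn(P_m)$ has the SAP (as cited in the proof of Proposition~\ref{prop:Cn}). If $i$ is a leaf, then $A$, $A + tE_{i,i}$, and $A(i)$ all lie in $\mptn(P_m)$ for some $m$ and therefore have the SAP, so Theorem~\ref{thm:snipchar} gives the $i$-SNIP in every case, matching the fact that every leaf nullity pair is trivial. If $i$ is internal, then $A(i) = C_1 \oplus C_2$; by the direct-sum characterization of the SAP, $A(i)$ has the SAP if and only if at least one of $C_1, C_2$ is nonsingular. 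When $\nul(A(i)) \leq 1$ this holds automatically, so Theorem~\ref{thm:snipchar} yields the $i$-SNIP for each of the trivial pairs. When the $i$-nullity pair is $(1,2)$ we are in the upper case with $\nul(C_1) = \nul(C_2) = 1$, so neither summand is nonsingular, $A(i)$ fails the SAP, and Theorem~\ref{thm:snipchar}(c) forces the $i$-SNIP to fail. The main obstacle I anticipate is the verification that $\bb \notin \Col(C_1 \oplus C_2)$ in the $(1,2)$ construction, which rests on the three-term recurrence property that is special to paths; once this is in hand, the remaining work is a routine case-by-case application of Theorem~\ref{thm:snipchar} combined with the direct-sum rule for the SAP.
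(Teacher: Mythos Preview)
Your proposal is correct and follows essentially the same approach as the paper: enumerate candidates via $M(P_n)=1$, kill $(1,1)$ with Corollary~\ref{cor:kk1}, and handle the SNIP part through Theorem~\ref{thm:snipchar} together with the direct-sum SAP rule. The only noteworthy difference is in realizing $(1,2)$ at an internal root: you build the matrix explicitly and argue via the three-term recurrence that $\bb\notin\Col(C_1\oplus C_2)$, whereas the paper simply observes that $M(P_n-i)=2$ gives some $A$ with $\nul(A(i))=2$, and then $M(P_n)=1$ forces $\nul(A)=1$; your route is more constructive, theirs is quicker. One small expository wrinkle: for the internal trivial pairs $(0,0)$ and $(1,0)$ you invoke ``Theorem~\ref{thm:snipchar} yields the $i$-SNIP'' right after discussing $A(i)$, but those are the neutral and downer cases, where the relevant matrix is $A+tE_{i,i}$ or $A$ itself (both in $\mptn(P_n)$, hence SAP)---you set this up at the start of the paragraph, so the argument is there, just make the case split explicit.
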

\begin{proof}
Since $M(P_n) = 1$, we only need to consider nullity pairs $(k,\ell)$ with $k\leq 1$ and $|k - \ell| \leq 1$. We can eliminate $(1,1)$ for any root since Corollary~\ref{cor:kk1} would imply $(2,1)$ is realizable. This leaves $(1,2)$ as the only possible nontrivial nullity pair for any root. 

Suppose that $i\in V(P_n)$ is a leaf. Then $M(P_n - i) = M(P_{n-1}) = 1$ and so $(P_n,\mbox{leaf})$ does not allow $(1,2)$.

Suppose that $i\in V(P_n)$ is not a leaf. Then $M(P_n - i) = 2$ and so $(P_n,i)$ allows $(1,2)$. Moreover, every matrix in $\mptn(P_n - i)$ with nullity $2$ has the form $B = B_1\oplus B_2$ such that $\nul(B_1) = \nul(B_2) = 1$. By \cite[Lemma~3.1]{BFH05}, $B$ does not have the SAP and each $A\in\mptn(P_n)$ with $A(i) = B$ does not have the $i$-SNIP by Theorem~\ref{thm:snipchar}.

Moreover, by Propositions \ref{prop:010} and \ref{prop:01}, a matrix in $\mptn(P_n)$ has the $i$-SNIP for any index $i$ if and only if its $i$-nullity pair is trivial.
\end{proof}


A vertex in the star graph $K_{1,n}$ is either a leaf or the center vertex. We write $(K_{1,n},\mbox{center})$ if the center vertex is the root. In the next section, Theorem \ref{thm:trees} characterizes when a rooted tree allows a nullity pair with the SNIP. As such, Proposition \ref{prop:stars} only determines the nullity pairs realizable by rooted stars. 

\begin{proposition}
\label{prop:stars}
Let $n\geq 3$. The nontrivial nullity pairs allowed by $(K_{1,n},\mbox{center})$ are precisely the pairs $(\ell - 1,\ell)$ with $2\leq \ell \leq n$. The nontrivial nullity pairs allowed by $(K_{1,n},\mbox{leaf})$ are precisely the pairs  $(\ell,\ell)$ and $(\ell+1,\ell)$ with $1\leq \ell \leq n-2$.
\end{proposition}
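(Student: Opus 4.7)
I would split on the two choices of root and rely throughout on the observation that in any matrix $A\in\mptn(K_{1,n})$, if the center is vertex $c$ and the leaves are $\ell_1,\dots,\ell_n$, then the only off-diagonal nonzero entries are between $c$ and the $\ell_j$'s; consequently $A(c)$ is diagonal and $A(\ell_j)\in\mptn(K_{1,n-1})$.

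\emph{Case 1: root is the center.} Without loss of generality, $i=1$, and write $A = \left[\begin{smallmatrix} a & \bb\trans \\ \bb & D \end{smallmatrix}\right]$ with $D$ diagonal and $\bb$ entrywise nonzero. Since $\Col(D)=\vspan\{\be_j : D_{jj}\ne 0\}$ and every entry of $\bb$ is nonzero, $\bb\in\Col(D)$ if and only if $D$ is nonsingular. Hence if $\ell:=\nul(D)\geq 1$ then the root is upper by Remark~\ref{rem:threecases}, forcing $\nul(A)=\ell-1$; the case $\ell=0$ gives only $(0,0)$ and $(1,0)$, which are trivial. For realizability of $(\ell-1,\ell)$ with $2\leq \ell\leq n$, I would pick $D$ with exactly $\ell$ zeros, $\bb=\bone$, and any $a$.

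\emph{Case 2: root is a leaf.} Label the leaf as $i=1$ and the center as $2$, so $A = \left[\begin{smallmatrix} a & b\be_1\trans \\ b\be_1 & C \end{smallmatrix}\right]$ with $b\neq 0$ and $C\in\mptn(K_{1,n-1})$. The crux is to decide, as $C$ varies, when $\be_1\in\Col(C)$ and to compute $\nul(C)$. I would partition $C = \left[\begin{smallmatrix} c & \bd\trans \\ \bd & E \end{smallmatrix}\right]$ with $E$ diagonal and $\bd$ entrywise nonzero, and split on $m:=\nul(E)$.

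When $m=0$, a Schur-complement argument gives $\nul(C)\in\{0,1\}$; if $\nul(C)=1$ then a kernel vector has the form $(y_1,-y_1 E^{-1}\bd)$, so it has nonzero first coordinate and $\be_1\notin\Col(C)$. Either way only trivial pairs arise. When $m\geq 1$, after permuting leaves I would separate the singular diagonal block of $E$ to put $C$ in block form; a direct computation (or Schur complement on the nonsingular diagonal block) shows the upper-left $(m+1)\times(m+1)$ block has rank $2$, yielding $\nul(C)=m-1$, and moreover every kernel vector of $C$ has first coordinate $0$ (because the equation $y_1\bd=\mathbf{0}$ on the singular rows forces $y_1=0$). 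Hence $\be_1\in\Col(C)$, so the leaf is neutral or downer, producing the pairs $(m-1,m-1)$ and $(m,m-1)$. Setting $\ell=m-1$ and noting $m\in\{1,\dots,n-1\}$ yields the candidate non-trivial pairs $(\ell,\ell),(\ell+1,\ell)$ for $1\leq\ell\leq n-2$. Realizability: for such $\ell$ choose $E$ with exactly $\ell+1$ zeros, $\bd=\bone$, any $c$, and use Proposition~\ref{prop:unique_t_downer} to toggle between neutral and downer via the free parameter $a$.

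\emph{Main obstacle.} The center case is essentially mechanical; the real work lives in Case~2, where I must pin down $\nul(C)$ and the location of $\Col(C)$ relative to $\be_1$ as a function of the number of zero diagonal entries of $E$. The slightly delicate point is showing that when $m\geq 1$, the nullity drops by exactly one from $m$ to $m-1$ and no kernel vector has a nonzero first coordinate; this is what rules out upper behavior of the leaf and pins down the non-trivial pairs to exactly $(\ell,\ell)$ and $(\ell+1,\ell)$.
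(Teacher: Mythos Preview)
Your proposal is correct and shares the same core observation as the paper: the nullity of a star matrix is controlled by the number of zero diagonal entries among the leaves. Your center case matches the paper almost verbatim.

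For the leaf case you take a slightly different organizational route. The paper fixes the center as vertex~$1$, lets $t$ be the number of zero leaf-diagonals, records once that $\nul(A)=t-1$ whenever $t>0$, and then simply reapplies this same formula to the smaller star $A(i)$, noting that removing leaf $i$ leaves $t$ or $t-1$ zero leaf-diagonals according as $a_{ii}\ne 0$ or $a_{ii}=0$. You instead compute $\nul\bigl(A(i)\bigr)$ from scratch via a block/Schur analysis of $C$ and then invoke the upper/neutral/downer trichotomy to recover $\nul(A)$. Both are valid; the paper's version is shorter because it recognizes that $A$ and $A(\text{leaf})$ are both star matrices and recycles the single nullity formula, avoiding your separate verification that $\be_1\in\Col(C)$ when $m\ge 1$.
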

\begin{proof}
Let $A = [a_{ij}]\in \mptn(K_{1,n})$ and assume that $1$ is the central vertex. Let $t$ be the number of zero diagonal entries of $A(1)$. Observe that if $t>0$, then $\nul(A)=t-1$ (and if $t=0$, then $\nul(A) \in \{0,1\}$). 

Since $\nul(A(1)) = t$, the nontrivial nullity pairs allowed by $(K_{1,n},\mbox{center})$ are precisely the pairs $(\ell - 1,\ell)$ with $2\leq \ell \leq n$.

Let $i\in V(K_{1,n})$ be a leaf and suppose $2 \leq t \leq n$. Observe that $\nul(A(i)) = t-1$ if $a_{ii}\not=0$ and $\nul(A(i)) = t-2$ if $a_{ii} = 0$. 
Thus the nontrivial nullity pairs allowed by $(K_{1,n},\mbox{leaf})$ are precisely the pairs  $(\ell,\ell)$ and $(\ell+1,\ell)$ with $1\leq \ell \leq n-2$.
\end{proof}

\section{Minimal minors}
\label{sec:minors}

Having introduced the SNIP and established the necessary tools, we are ready to showcase the potential of the Minor Monotonicity Theorem (Theorem~\ref{thm:minmono}). In particular, we determine the set of minimal minors for nullity pairs $(k,\ell)$ with the SNIP when $k\leq 1$ or $\ell\leq 1$. Observe that it suffices to only consider the connected case when classifying minimal minors. Indeed, by Proposition \ref{prop:directsum}, for any disconnected rooted graph $(G,i)$, the component $H$ that contains $i$ give a connected minor $(H,i)$ that allows $(k,\ell)$ with the SNIP.  Therefore, every minimal minor for a given nullity pair is connected.  Moreover,
according to the celebrated Robinson--Seymour--Thomas Theorem (graph minor theorem) \cite{RS2010}, the set of minimal minors is finite for each $(k,\ell)$.  

\begin{remark}\label{rem:minfortrivial}
By Proposition~\ref{prop:010}, the only minimal minor for nullity pairs $(0,0)$ and $(1,0)$ is $(K_1,i)$. Similarly, Proposition \ref{prop:01} implies that the only minimal minor for the nullity pair $(0,1)$ is $(K_2,i)$.
\end{remark}

\subsection{Nullity pairs \texorpdfstring{$(1,1)$}{(1,1)}, \texorpdfstring{$(2,1)$}{(2,1)}}

The minimal minors for $(1,1)$ and $(2,1)$ are identical by Corollary~\ref{cor:kk1} and Lemma~\ref{lem:diagperturb}. That being said, they are a bit more complicated than the trivial nullities established in Remark~\ref{rem:minfortrivial}. 


A vertex with degree at least $3$ is called a \emph{high-degree} vertex.  A \emph{generalized star} is a tree with exactly one high-degree vertex, and the unique high-degree vertex is called its center. The following lemma can be quickly proved using the Parter--Wiener theorem, see \cite{P1960,W1984,JDS2003}. 

\begin{lemma}
\label{lem:gstar}
Let $G$ be a generalized star.
Then $(G,\mbox{center})$ does not allow the nullity pair $(2,1)$.  
\end{lemma}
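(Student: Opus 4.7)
The plan is to argue by contradiction using the Parter--Wiener theorem, as hinted by the citations in the statement. Suppose, toward a contradiction, that there exists $A\in\mptn(G)$ with $i$-nullity pair $(2,1)$, where $i$ is the (unique) center of the generalized star $G$. Since $\nul(A)=2$, the value $0$ is an eigenvalue of $A$ with multiplicity at least $2$.

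The strong form of the Parter--Wiener theorem applied to the tree $G$ and the eigenvalue $0$ yields a vertex $v\in V(G)$, a so-called \emph{Parter vertex}, with the properties that $\nul(A(v))=\nul(A)+1=3$ and that $0$ is an eigenvalue of the principal submatrix of $A$ corresponding to at least three distinct components of $G-v$. In particular, $v$ must have degree at least $3$ in $G$.

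The point is now purely combinatorial: by the definition of a generalized star, $G$ has exactly one high-degree vertex, namely its center $i$. Consequently the Parter vertex $v$ is forced to equal $i$, so $\nul(A(i)) = \nul(A(v)) = 3$. This directly contradicts the assumption $\nul(A(i)) = 1$, completing the proof.

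There is no real obstacle here; the argument is a one-line application of a classical theorem that the statement explicitly cites. The only subtle point is to invoke the version of Parter--Wiener that produces a Parter vertex whose removal leaves at least three components with $0$ as an eigenvalue (forcing degree $\geq 3$), rather than the weaker existence-of-a-Parter-vertex statement, but this is standard in \cite{P1960,W1984,JDS2003}.
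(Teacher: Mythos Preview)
Your proof is correct and follows essentially the same approach as the paper: invoke the Parter--Wiener theorem to conclude that the Parter vertex for the eigenvalue $0$ must be the unique high-degree vertex (the center $i$), giving $\nul(A(i))=3\neq 1$. Your version is in fact more explicit than the paper's one-line argument, since you spell out why the Parter vertex is forced to have degree at least $3$.
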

\begin{proof}
Suppose $A\in\mptn(G)$ satisfies $\nul(A) = 2$ and let $i$ be the center vertex of $G$. By the Parter--Wiener theorem, the vertex $i$ is the Parter vertex, meaning $\nul(A(i)) = 3$.
\end{proof}

\begin{theorem}
\label{thm:minl21}
Let $(G,i)$ be a connected rooted graph.  Then the following are equivalent.  
\begin{enumerate}[label={{\rm(\alph*)}}]
\item $(G,i)$ allows the nullity pair $(2,1)$ with the SNIP.
\item $(G,i)$ allows the nullity pair $(2,1)$.
\item $(G,i)$ contains $(K_3,i)$ or $(K_{1,3},\mbox{leaf})$ as a minor.
\item $G$ is not a path, and $(G,i)$ is not a generalized star with the root $i$ at its center.
\end{enumerate}
\end{theorem}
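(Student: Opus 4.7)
The plan is to establish the cyclic chain $(a)\Rightarrow(b)\Rightarrow(d)\Rightarrow(c)\Rightarrow(a)$. The implication $(a)\Rightarrow(b)$ is immediate from the definitions. For $(b)\Rightarrow(d)$, I argue contrapositively: if $G = P_n$ is a path, then $M(P_n) = 1$ rules out any matrix of nullity $2$ and hence the pair $(2,1)$; if $(G,i)$ is a generalized star with root at its center, Lemma~\ref{lem:gstar} directly forbids the pair $(2,1)$.

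For $(c)\Rightarrow(a)$, I invoke the Minor Monotonicity Theorem (Theorem~\ref{thm:minmono}) and reduce to checking the two base cases. The rooted graph $(K_3,i)$ allows $(2,1)$ with the SNIP by Proposition~\ref{prop:Kn} (which also confirms that every matrix in $\mptn(K_n)$ has the $i$-SNIP). For $(K_{1,3},\text{leaf})$, the matrix already exhibited in Example~\ref{ex:verification} lies in $\mptn(K_{1,3})$, possesses the $1$-SNIP by the verification performed there, and has $1$-nullity pair $(2,1)$ by a direct rank computation.

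The bulk of the work is $(d)\Rightarrow(c)$, which I split according to whether $G$ contains a cycle. If $G$ has a cycle, I first bring the root onto a cycle by contracting the edges of a shortest path from $i$ to some cycle vertex; each such contraction is incident to the current root, which therefore migrates along the path and ends on the cycle. After deleting all edges and vertices outside the cycle, I contract consecutive cycle edges not incident to $i$ until only three cycle vertices remain, producing $(K_3,i)$ as a rooted minor. If instead $G$ is a tree, then condition $(d)$ guarantees the existence of a high-degree vertex $v\neq i$; letting $P\colon i = u_0, u_1, \ldots, u_m = v$ be the unique $i$--$v$ path, I contract the edges $\{u_j, u_{j+1}\}$ for $1\leq j\leq m-1$ (none of which is incident to $i$), merging $u_1, \ldots, u_m$ into a single vertex $w$ that is adjacent to $i$ and retains at least two of $v$'s original neighbors. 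Iterated edge deletions and deletions of isolated non-root vertices then prune the rest of the tree down to $i$, $w$, and exactly two further leaves adjacent to $w$, yielding $(K_{1,3},\text{leaf})$ as a rooted minor with $i$ in the leaf role.

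The main obstacle is the tree case of $(d)\Rightarrow(c)$: the contractions must be organized so that the root $i$ is never absorbed into another vertex, enough of the high-degree structure at $v$ must survive the contractions, and the remaining subtrees must be removable via the legal rooted-minor operations (edge contraction, edge deletion, and deletion of isolated non-root vertices).
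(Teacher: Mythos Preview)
Your proof is correct and follows essentially the same approach as the paper: the implications $(a)\Rightarrow(b)$, $(b)\Rightarrow(d)$, and $(c)\Rightarrow(a)$ match the paper's arguments exactly (same base cases, same use of Lemma~\ref{lem:gstar} and Theorem~\ref{thm:minmono}), and your constructive proof of $(d)\Rightarrow(c)$ is just the contrapositive of the paper's argument for $\neg(c)\Rightarrow\neg(d)$, which likewise contracts the $i$--$v$ path to exhibit a $(K_{1,3},\text{leaf})$ minor and appeals to the standard fact that a graph with a cycle has a $K_3$ minor. The only cosmetic difference is that you organize the equivalences as a single cycle, whereas the paper proves $(c)\Leftrightarrow(d)$ separately.
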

\begin{proof}
By Proposition~\ref{prop:Kn} and Example~\ref{example:K13}, both $(K_3,i)$ and $(K_{1,3},\mbox{leaf})$ allow the nullity pair $(2,1)$ with the SNIP.  By the Minor Monotonicity Theorem, 
every rooted graph that contains $(K_3,i)$ or $(K_{1,3},\mbox{leaf})$ as a minor allows $(2,1)$ with the SNIP. This proves (c) implies (a).  Also, note (a) implies (b) by definition.

Now suppose $(G,i)$ does not contain $(K_3, i)$ or $(K_{1,3}, \mbox{leaf})$ as a minor.  
Observe that $G$ is a tree since $(G,i)$ is connected and does not contain $(K_3,i)$ as a minor. Assume that $G$ has a high-degree vertex $j\not=i$. Contract edges on the path connecting $i$ to $j$ until $i$ is adjacent to $j$. Observe that the resulting rooted graph contains $(K_{1,3}, \mbox{leaf})$ as an induced rooted subgraph. Therefore, $(K_{1,3}, \mbox{leaf})$ is a minor of $(G,i)$. This is a contradiction and so $G$ can have at most one high-degree vertex. If $G$ has one high-degree vertex, then $G$ is a generalized star and $i$ has to be its center.  If $G$ has no high-degree vertex, then $G$ is a path. Furthermore, both a rooted path (rooted at any vertex) or a generalized star rooted at its center does not contain $(K_3,i)$ or $(K_{1,3}, \mbox{leaf})$ as a minor.  Thus, (c) and (d) are equivalent.

To complete the proof, we show (b) implies (d) via contraposition.  If $G$ is a path, then $(G,i)$ does not allow $(2,1)$ for any root $i$ since the maximum nullity of a path is $1$.  If $G$ is a generalized star with $i$ its center, then $(G,i)$ does not allow $(2,1)$ as well by Lemma~\ref{lem:gstar}.
\end{proof}

\subsection{Nullity pair \texorpdfstring{$(1,2)$}{(1,2)}}

As we shall soon see, the minimal minors for the nullity pair $(1,2)$ with the SNIP are the two rooted graphs shown in Figure~\ref{fig:paws211}.  
\begin{figure}[h]
\begin{center}
\begin{tikzpicture}
\node[label={above:$i$}] (1) at (-1,0) {};
\node (2) at (0,0) {};
\node (3) at (30:1) {};
\node (4) at (-30:1) {};
\draw (1) -- (2) -- (3) -- (4) -- (2);
\node[rectangle,draw=none] at (0,-1) {$\Paw$};
\end{tikzpicture}
\hfil
\begin{tikzpicture}
\node[label={above:$i$}] (1) at (-2,0) {};
\node (2) at (-1,0) {};
\node (3) at (0,0) {};
\node (4) at (30:1) {};
\node (5) at (-30:1) {};
\draw (1) -- (2) -- (3);
\draw (4) -- (3) -- (5);
\node[rectangle,draw=none] at (-0.5,-1) {$S(2,1,1)$};
\end{tikzpicture}
\end{center}
\caption{The minimal minors for the nullity pair $(1,2)$ with the SNIP.}
\label{fig:paws211}
\end{figure}
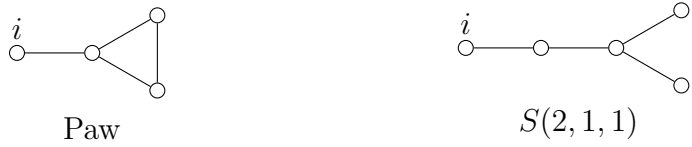

If unspecified we shall always assume the roots of $(\Paw,i)$ and $(S(2,1,1),i)$ are as indicated in Figure~\ref{fig:paws211}. The next example verifies that these rooted graphs allow the nullity pair $(1,2)$ with the SNIP.

\begin{example}
\label{ex:pair12}
Let  
\[A_1 = 
\begin{bmatrix}
 0 & 1 & 0 & 0 \\
 1 & 1 & 1 & 1 \\
 0 & 1 & 1 & 1 \\
 0 & 1 & 1 & 1 \\
\end{bmatrix} \quad\text{ and }\quad
A_2 = 
\begin{bmatrix}
 0 & 1 & 0 & 0 & 0 \\
 1 & 0 & 0 & 0 & 1 \\
 0 & 0 & 0 & 0 & 1 \\
 0 & 0 & 0 & 0 & 1 \\
 0 & 1 & 1 & 1 & 0 \\
\end{bmatrix}.\]
Observe that $A_1 \in \mptn(\Paw)$ and $A_2\in\mptn(S(2,1,1))$.  By direct computation, both $A_1$ and $A_2$ have $1$-nullity pair $(1,2)$.  Also, $A_1(1)$ and $A_2(1)$ has the SAP, so $A_1$ and $A_2$ have the $1$-SNIP by Theorem~\ref{thm:snipchar}. Therefore, both $(\Paw, 1)$ and $(S(2,1,1), 1)$ allow the nullity pair $(1,2)$ with the SNIP.
\end{example}

We will show that the absence of $(\Paw,i)$ and $(S(2,1,1),i)$ minors results in a yam graph (see Figure~\ref{fig:yam}).  

\begin{definition}
A \emph{yam graph} is a connected rooted graph $(G,i)$ such that 
\begin{itemize}
\item each component of $G - i$ is either a generalized star or a path,
\item $i$ is only adjacent to the center of each component of $G - i$ that is a generalized star.
\end{itemize}
\end{definition}

\begin{figure}[h]
\begin{center}
\begin{tikzpicture}
\node[label={right:root}, fill] (root) at (0,0) {};

\node (l11) at (-1,1) {};
\node (l12) at (-1.5,2) {};
\node (l13) at (-3,1) {};
\node (l14) at (-3,0.5) {};
\draw (root) -- (l11);
\draw decorate[decoration={name=random steps}] {(l11) -- (l12)};
\draw decorate[decoration={name=random steps}] {(l11) -- (l13)};
\draw decorate[decoration={name=random steps}] {(l11) -- (l14)};

\node (l21) at (-0.5,2) {};
\node (l22) at (-1.5,3) {};
\node (l23) at (-1,3.5) {};
\node (l24) at (0.5,3) {};
\node (l25) at (2,3) {};
\draw (root) -- (l21);
\draw decorate[decoration={name=random steps}] {(l21) -- (l22)};
\draw decorate[decoration={name=random steps}] {(l21) -- (l23)};
\draw decorate[decoration={name=random steps}] {(l21) -- (l24)};
\draw decorate[decoration={name=random steps}] {(l21) -- (l25)};

\node (l31) at (1,1.5) {};
\node (l32) at (2,2) {};
\node (l33) at (4,1.5) {};
\node (l34) at (3,1) {};
\draw (root) -- (l31);
\draw decorate[decoration={name=random steps}] {(l31) -- (l32)};
\draw decorate[decoration={name=random steps}] {(l31) -- (l33)};
\draw decorate[decoration={name=random steps}] {(l31) -- (l34)};

\foreach \i in {1,2,3,4} {
    \pgfmathsetmacro{\x}{-2 + 0.5*(\i - 1)}
    \node (y1\i) at (\x,-1) {};
}
\draw (y11) -- (y12) -- (y13) -- (y14);
\draw (root) -- (y11);
\draw (root) -- (y12);
\draw (root) -- (y14);
\draw[dashed] (-1.25,-1) ellipse (1cm and 0.3cm); 

\foreach \i in {1,2,3,4,5,6} {
    \pgfmathsetmacro{\x}{0 + 0.5*(\i - 1)}
    \node (y2\i) at (\x,-2) {};
}
\draw (y21) -- (y22) -- (y23) -- (y24) -- (y25) -- (y26);
\draw (root) -- (y22);
\draw (root) -- (y23);
\draw (root) -- (y26);
\draw[dashed] (1.25,-2) ellipse (1.75cm and 0.3cm); 

\end{tikzpicture}
\end{center}
\caption{An example of yam graph.}
\label{fig:yam}
\end{figure}
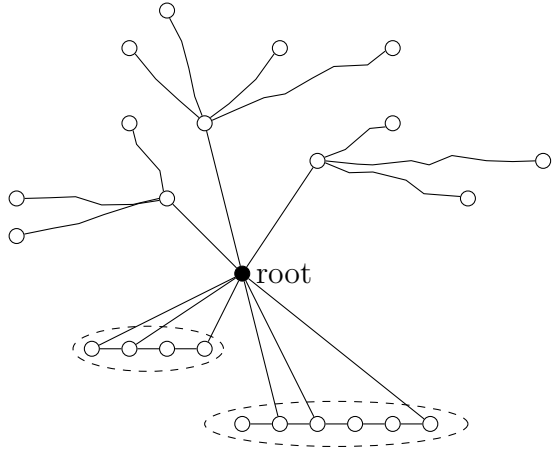

In order to characterize the minimal minors for the nullity pair $(1,2)$ we require the following two lemmas.

\begin{lemma}
\label{lem:gstarnul2}
Let $(G,i)$ be a generalized star with the root $i$ at its center.  If $B\in\mptn(G)$ has nullity at least $2$, then each vector in $\ker(B)$ has its $i$-th coordinate equal to $0$. 
\end{lemma}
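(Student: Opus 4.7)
The plan is to leverage the Parter--Wiener theorem, just as in the proof of Lemma~\ref{lem:gstar}, to pin down the role of the center $i$, and then invoke Remark~\ref{rem:threecases} to constrain the kernel.

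First I would apply the strong form of the Parter--Wiener theorem: since $B\in\mptn(G)$ is a symmetric matrix whose graph $G$ is a tree, and $0$ is an eigenvalue of multiplicity $\nul(B)\geq 2$, there exists a Parter vertex $v$ of degree at least $3$ in $G$, that is, a vertex $v$ with $\nul(B(v))=\nul(B)+1$. Because $G$ is a generalized star whose only high-degree vertex is the center $i$, we must have $v=i$. Hence $\nul(B(i))=\nul(B)+1$, which by the classification in Remark~\ref{rem:threecases} means exactly that $i$ is an upper index for $B$.

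Next I would translate the fact that $i$ is upper into a statement about kernel vectors. Assume without loss of generality that $i=1$ and write
\[
B=\begin{bmatrix} a & \bb\trans \\ \bb & C\end{bmatrix}, \qquad C=B(i).
\]
By Remark~\ref{rem:threecases}, the upper status of $i$ is equivalent to $\bb\notin\Col(C)$. Now let $\bx\in\ker(B)$ and partition $\bx=(x_i,\by)$. The bottom block of $B\bx=\bzero$ reads $\bb\, x_i+C\by=\bzero$, i.e., $\bb\, x_i = -C\by\in\Col(C)$. Since $\bb\notin\Col(C)$, this forces $x_i=0$, which is the desired conclusion.

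There is no real obstacle; the only subtle point is quoting the strengthened Parter--Wiener theorem in the form that guarantees a Parter vertex of degree $\geq 3$ (and not merely some Parter vertex), which is precisely what makes the center unavoidable here. This is the same ingredient already used in Lemma~\ref{lem:gstar}, so the citation chain to \cite{P1960,W1984,JDS2003} carries over without modification.
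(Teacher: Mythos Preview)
Your proof is correct and follows essentially the same approach as the paper: apply the Parter--Wiener theorem to conclude that the center $i$ is upper, then use Remark~\ref{rem:threecases} (that $\bb\notin\Col(C)$) to force the $i$-th coordinate of any kernel vector to vanish. The paper phrases the final linear-algebra step as ``the $i$-th column of $B$ is not a linear combination of the other columns,'' but this is equivalent to your block computation.
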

\begin{proof}
Assume $B\in \mptn(G)$ has nullity at least 2. By the Parter--Wiener theorem, 
the vertex $i$ is upper. 
By Remark~\ref{rem:threecases}, the $i$-th column of $B$ is not a linear combination of the other columns. Let $\bx\in\ker(B)$. Since $B\bx$ is a linear combination of the columns of $B$, the $i$-th entry of $\bx$ is zero.
\end{proof}

\begin{lemma}
\label{lem:yam12snip}
A yam graph does not allow $(1,2)$ with the SNIP.  
\end{lemma}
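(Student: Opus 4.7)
The plan is to suppose for contradiction that a yam graph $(G,i)$ allows $(1,2)$ with the SNIP, witnessed by some $A\in\mptn(G)$. Partition $A$ as in Remark~\ref{rem:threecases}, so $A(i)=C$ and $\bb$ is the border vector. Since $\nul(A)=1<2=\nul(C)$, Remark~\ref{rem:threecases} forces $i$ to be an upper index, i.e., $\bb\notin\Col(C)$. On the other hand, Lemma~\ref{lem:snip2sap} tells us that $C$ has the SAP. The key move is to decompose $C = B_1\oplus\cdots\oplus B_k$ according to the components $G_1,\ldots,G_k$ of $G-i$, each of which is a path or a generalized star by the yam hypothesis. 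Since the maximum nullity of a path is $1$, the total nullity $2$ of $C$ must be distributed either as $2+0+\cdots+0$ (the $2$ coming from a generalized-star component) or as $1+1+0+\cdots+0$.

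In the $1+1$ case the contradiction is immediate: the direct-sum characterization of the SAP recalled just before Proposition~\ref{prop:directsum} says that a direct sum has the SAP only if at most one summand is singular, yet here we have two singular summands $B_1$ and $B_2$, contradicting the SAP of $C$.

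The $2+0+\cdots$ case is where the yam structure is essential. Say $B_1$ has nullity $2$, so $G_1$ is a generalized star with some center $c$. The yam hypothesis says $i$'s neighbors inside $G_1$ consist only of $c$, so the restriction of $\bb$ to the indices of $G_1$ is a scalar multiple of $\be_c$. By Lemma~\ref{lem:gstarnul2}, every vector of $\ker(B_1)$ has its $c$-entry equal to zero; by symmetry of $B_1$ this places $\be_c$ in $\Col(B_1)=\ker(B_1)^\perp$. Hence $\bb$ restricted to $G_1$ lies in $\Col(B_1)$, and for $j\geq 2$ the matrix $B_j$ is nonsingular so the corresponding restriction lies trivially in $\Col(B_j)$. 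Assembling these pieces yields $\bb\in\Col(C)$, contradicting the upper status of $i$.

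The main obstacle is Case 1, which is where the yam hypothesis is truly used: if $i$ were allowed to be adjacent to some non-center vertex of a generalized-star component carrying nullity $2$, the restriction of $\bb$ would have support outside $\{c\}$ and could fail to lie in $\Col(B_1)$, so the argument would collapse. Case 2 is routine once the standard direct-sum result for SAP is invoked.
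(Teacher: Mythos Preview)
Your proof is correct and follows essentially the same route as the paper's: both use that the $i$-SNIP forces $A(i)$ to have the SAP (you via Lemma~\ref{lem:snip2sap}, the paper via Theorem~\ref{thm:snipchar}), invoke the direct-sum SAP criterion to force all the nullity onto a single generalized-star block, and then appeal to Lemma~\ref{lem:gstarnul2}. The only cosmetic difference is in the final contradiction: the paper pads null vectors of $B_1$ with zeros to exhibit $\nul(A)\geq 2$ directly, whereas you dualize the same observation to show $\bb\in\Col(C)$, contradicting the upper status of $i$---these are two sides of the same orthogonality relation $\ker(B_1)\perp\be_c$.
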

\begin{proof}
Let $(G,i)$ be a yam graph.  Suppose $A\in\mptn(G)$ has the $i$-SNIP and $i$-nullity pair $(1,2)$. Without loss of generality, $i = 1$ and $A(i)$ is the direct sum of matrices $B_1, \ldots, B_h$, each of which corresponds to a component of $G - i$. So, $A$ has the form
\[
    A = \begin{bmatrix}
    a & \bb_1\trans & \cdots & \bb_h\trans \\
    \bb_1 & B_1 & ~ & ~ \\
    \vdots & ~ & \ddots & ~ \\
    \bb_h & ~ & ~ & B_h
    \end{bmatrix}.
\]
By Theorem~\ref{thm:snipchar}, $A(i)$ has the SAP with $\nul(A(i)) = 2$. By \cite[Lemma~3.1]{BFH05}, we may assume $\nul(B_1) = 2$ and $B_2,\ldots, B_h$ are nonsingular.  Since the maximum nullity of a path is $1$, the graph of $B_1$ is a generalized star; let $j$ denote its center.  By Lemma~\ref{lem:gstarnul2}, each vector $\bx$ in $\ker(B_1)$ has its $j$-th entry zero. Padding each such $\bx$ with zeros results in a null vector of $A$ since the only nonzero entry in $\bb_1$ is the $j$-th entry.  Therefore, $\nul(A) \geq 2$, which is a contradiction.
\end{proof}

We are now ready to state the main result of this section.

\begin{theorem}
\label{thm:minl12}
Let $(G,i)$ be a connected rooted graph.  Then the following are equivalent.  
\begin{enumerate}[label={{\rm(\alph*)}}]
\item $(G,i)$ allows the nullity pair $(1,2)$ with the SNIP.
\item $(G,i)$ contains $(\Paw,i)$ and $(S(2,1,1),i)$, as shown in Figure~\ref{fig:paws211}.
\item $G$ is not a yam graph.
\end{enumerate}
\end{theorem}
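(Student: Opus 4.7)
The plan is to prove the cyclic chain $(b) \Rightarrow (a) \Rightarrow (c) \Rightarrow (b)$. The implication $(b) \Rightarrow (a)$ is immediate from Example~\ref{ex:pair12}, which certifies that both $(\Paw, i)$ and $(S(2,1,1), i)$ allow $(1,2)$ with the SNIP, combined with the Minor Monotonicity Theorem (Theorem~\ref{thm:minmono}). The implication $(a) \Rightarrow (c)$ is simply the contrapositive of Lemma~\ref{lem:yam12snip}.

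The substantive work is $(c) \Rightarrow (b)$: assuming $G$ is not a yam graph, I will exhibit either a $(\Paw, i)$ or an $(S(2,1,1), i)$ rooted minor. There are two ways the yam condition can fail: (i) some component $C$ of $G - i$ is neither a generalized star nor a path, or (ii) every component of $G - i$ is a generalized star or path, but $i$ is adjacent to some non-center vertex $w$ of a generalized star component $C$ with center $v$. In scenario (i), if $C$ contains a cycle, I take a shortest path in $C$ from a neighbor of $i$ to a vertex on the cycle, contract it so that $i$ becomes adjacent to a cycle vertex, and contract the cycle down to a triangle; this yields $(\Paw, i)$. Otherwise $C$ is a tree, and failing to be either a generalized star or a path forces $C$ to have at least two high-degree vertices, so I can select a high-degree vertex $v \in C$ and a neighbor $w$ of $i$ in $C$ with $v \neq w$ (taking $v$ to be any high-degree vertex distinct from $w$, using the second high-degree vertex when $w$ itself is high-degree). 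The walk $i$-$w$-$\cdots$-$v$ routed through $C$ has length at least $2$, and contracting its interior down to a single intermediate vertex while retaining two pendant edges at $v$---available because $v$ has degree $\geq 3$ in $C$ with at most one neighbor on the walk---produces $(S(2,1,1), i)$. Scenario (ii) is handled in the same spirit: the $C$-path from $w$ to $v$ has length $\geq 1$, so the walk $i$-$w$-$\cdots$-$v$ has length $\geq 2$, and the same contract-and-prune construction delivers $(S(2,1,1), i)$ since the center $v$ of the generalized star again has two neighbors off the walk.

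The principal technical obstacle is the bookkeeping in $(c) \Rightarrow (b)$, specifically verifying in every subcase that the vertex $v$ designated to be the degree-$3$ center of the target $S(2,1,1)$ minor retains three distinct neighbors after contraction---one absorbed from the walk and two preserved as pendants. The most delicate point is scenario (i) with $C$ a tree when $i$ happens to be adjacent to multiple high-degree vertices of $C$; one must route the walk through $C$ using the tree path between two such vertices rather than through the trivial length-$1$ edge in $G$ itself, and must also confirm that this tree path does not exhaust the $\geq 3$ neighbors of $v$.
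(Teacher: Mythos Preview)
Your proposal is correct and follows essentially the same route as the paper: the cyclic chain $(b)\Rightarrow(a)\Rightarrow(c)\Rightarrow(b)$, with $(b)\Rightarrow(a)$ via Example~\ref{ex:pair12} and Theorem~\ref{thm:minmono}, and $(a)\Rightarrow(c)$ via Lemma~\ref{lem:yam12snip}. For $(c)\Rightarrow(b)$ the paper argues the contrapositive (no forbidden minor $\Rightarrow$ yam) while you argue directly (not yam $\Rightarrow$ forbidden minor exists), and your scenario split (cycle in a component; tree component with two high-degree vertices; generalized-star component with $i$ adjacent to a non-center) is a mild reorganization of the paper's steps (first force $G-i$ to be a forest, then handle high-degree vertices not adjacent to $i$, then two high-degree vertices both adjacent to $i$); the underlying graph surgery is the same in both.
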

\begin{proof}
By Example~\ref{ex:pair12}, both $(\Paw, i)$ and $(S(2,1,1),i)$ are rooted graphs that allow the nullity pair $(1,2)$ with the SNIP.  Therefore, by the Minor Monotonicity Theorem, 
every connected $(G,i)$ that contains $(\Paw, i)$ and $(S(2,1,1),i)$ as a minor allows $(1,2)$ with the SNIP. Thus (b) implies (a). By Lemma~\ref{lem:yam12snip}, a yam graph does not allow $(1,2)$ with the SNIP, which shows (a) implies (c).

To complete the proof, we now show that (c) implies (b). 
Assume $(G,i)$ does not contain $(\Paw,i)$ or $(S(2,1,1),i)$ as a minor.  Observe that if $G$ contains a cycle, then $i$ lies on that cycle, otherwise $(\Paw,i)$ is a minor of $(G,i)$.  As a result, $G - i$ is a forest.  If a component of $G - i$ contains a high-degree vertex $j$ (with respect to $G - i$) such that $i$ is not adjacent to $j$, then $(G,i)$ contains $(S(2,1,1),i)$ as a minor, a contradiction. Suppose that a component $H$ of $G - i$ has two high-degree vertices $u$ and $v$ (with respect to $G - i$). Then, by the preceding argument, $u$ and $v$ are adjacent to $i$. Since $H$ is connected, there exists a path of length at least 2 from $i$ to $u$ or $v$. Thus, $(G,i)$ contains $(S(2,1,1),i)$ as a minor, a contradiction. Consequently, a component of $G - i$ is either a generalized star or a path; when it is a generalized star, $i$ is only adjacent to its center. So, by definition, $(G,i)$ is a yam graph. 
\end{proof}

With the help of the SNIP, we are able to characterize all connected rooted graphs that allow $(1,2)$.

\begin{proposition}
\label{prop:cutvtx}
Every rooted graph $(G,i)$, where $i$ is a cut-vertex, allows the nullity pair $(1,2)$.
\end{proposition}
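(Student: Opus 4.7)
The plan is to construct directly a matrix $A\in\mptn(G)$ realizing the $i$-nullity pair $(1,2)$, using the block decomposition induced by the cut-vertex $i$ together with Remark~\ref{rem:threecases}.

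First I would reduce to the case where $G$ is connected. If $G$ is disconnected, let $G_0$ denote the component of $G$ containing $i$; since removing $i$ increases the number of components, $i$ is also a cut-vertex of $G_0$. Realizing $(1,2)$ for $(G_0,i)$ and taking a direct sum with any nonsingular matrices in $\mptn$ of the remaining components (obtained by taking any matrix and shifting by a sufficiently large scalar diagonal) produces a realization in $\mptn(G)$.

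So assume $G$ is connected and let $H_1,\ldots,H_m$ be the components of $G-i$. Since $i$ is a cut-vertex, $m\geq 2$, and since $G$ is connected, $i$ has at least one neighbor in each $H_j$. I would take $B_1$ and $B_2$ to be the Laplacian matrices of $H_1$ and $H_2$, so $B_j\in\mptn(H_j)$ with $\nul(B_j)=1$ and $\Col(B_j)=\{v:v\trans\bone=0\}$. For $j\geq 3$, pick any nonsingular $B_j\in\mptn(H_j)$. Let $\bb_j$ be the $(0,1)$-indicator vector of the neighbors of $i$ in $H_j$ (nonzero for each $j$), and assemble
\[
A=\begin{bmatrix} 0 & \bb_1\trans & \cdots & \bb_m\trans\\ \bb_1 & B_1 & & \\ \vdots & & \ddots & \\ \bb_m & & & B_m\end{bmatrix}\in \mptn(G).
\]
Then $A(i)=B_1\oplus\cdots\oplus B_m$ has nullity $1+1+0+\cdots+0=2$. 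Because $\bb_1$ is a nonzero $(0,1)$-vector, $\bb_1\trans\bone>0$, so $\bb_1\notin\Col(B_1)$, and hence the concatenation $\bb=(\bb_1\trans,\ldots,\bb_m\trans)\trans$ does not lie in $\Col(A(i))=\Col(B_1)\oplus\cdots\oplus\Col(B_m)$. Case~1 of Remark~\ref{rem:threecases} then forces $\nul(A)=\nul(A(i))-1=1$, so $(\nul(A),\nul(A(i)))=(1,2)$.

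There is no serious obstacle here: the construction is essentially forced once one chooses Laplacians for two of the diagonal blocks. The only observation that needs a moment's thought is that the column space of a connected graph's Laplacian equals $\bone^\perp$, which is exactly what makes the nonnegative nonzero indicator $\bb_1$ fall outside $\Col(B_1)$, delivering case~1 of Remark~\ref{rem:threecases}.
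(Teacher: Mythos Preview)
Your proof is correct and follows essentially the same approach as the paper: pick Laplacians for two components of $G-i$ and nonsingular matrices for the rest, then use that the $(0,1)$ neighbor-indicator lies outside $\bone^\perp=\Col(L)$ to invoke the upper-index case of Remark~\ref{rem:threecases}. The only cosmetic differences are that the paper groups components $H_3,\ldots,H_m$ into a single block and sets the $(i,i)$-entry to $1$ rather than $0$ (irrelevant for an upper index), and it omits your explicit reduction to the connected case.
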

\begin{proof}
Let $(G,i)$ be a rooted graph, where $i$ is a cut vertex. Then $G-i$ contains at least two components $H_1$ and $H_2$.  We may therefore write $G - i$ as the disjoint union of $H_1$, $H_2$ and $H_3$, where $H_3$ is possibly empty or disconnected.  Let $L_1$ and $L_2$ be the Laplacian matrices of $H_1$ and $H_2$, respectively. Let $E\in\mptn(H_3)$ be nonsingular. Without loss of generality $i = 1$ and  
\[
    A = \begin{bmatrix}
    a & \bb_1\trans & \bb_2\trans & \bb_3\trans \\
    \bb_1 & L_1 & ~ & ~ \\
    \bb_2 & ~ & L_2 & ~ \\
    \bb_3 & ~ & ~ & E
    \end{bmatrix},
\]
where $a = 1$, and $\bb_1, \bb_2$ and $\bb_3$ are the appropriately chosen $(0,1)$-vectors. Observe that $\nul(A(i)) = 2$.  Since $i$ is a cut vertex, $\inp{\bb_1}{\bone} \neq 0$.  Since $\ker(L_1) = \vspan(\{\bone\})$, $\bb_1$ is not in the column space of $L_1$.  Therefore, by Remark~\ref{rem:threecases}, $i$ is a upper index, and so $A$ has $i$-nullity pair $(1,2)$.
\end{proof}

\begin{lemma}
\label{lem:yam12}
Let $(G,i)$ be a yam graph. Then $(G,i)$ allows the nullity pair $(1,2)$ if and only if $G - i$ has two or more components.
\end{lemma}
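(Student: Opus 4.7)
The plan is to combine two observations. The backward direction follows immediately from Proposition~\ref{prop:cutvtx}, while the forward direction is a slight simplification of the kernel argument used in Lemma~\ref{lem:yam12snip}.

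For the backward direction, note that a yam graph is connected by definition, so if $G-i$ has two or more components, then $i$ must be a cut-vertex of $G$. Proposition~\ref{prop:cutvtx} then yields a matrix in $\mptn(G)$ with $i$-nullity pair $(1,2)$.

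For the forward direction I would argue the contrapositive and assume $G-i$ is connected. By the definition of a yam graph, $G-i$ is then a single path or a single generalized star. If $G-i$ is a path, then $M(G-i)=1$ forces $\nul(A(i))\leq 1$ for every $A\in\mptn(G)$, so $(1,2)$ cannot be realized. Otherwise $G-i$ is a generalized star with some center $j$, and by the yam condition $i$ is adjacent only to $j$ in $G$. Suppose for contradiction that some $A\in\mptn(G)$ achieves $i$-nullity pair $(1,2)$. Writing $A$ in the block form $A=\begin{bmatrix} a & \bb\trans \\ \bb & B\end{bmatrix}$ with $B=A(i)$, the vector $\bb$ is supported only at its $j$-th coordinate, and $\nul(B)=2$. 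Lemma~\ref{lem:gstarnul2}, applied to the generalized star $G-i$ rooted at $j$, then forces the $j$-th entry of every $\bx\in\ker(B)$ to vanish, so $\bb\trans\bx=0$ and the zero-padded vector $\begin{bmatrix} 0 \\ \bx\end{bmatrix}$ lies in $\ker(A)$. Hence $\nul(A)\geq\nul(B)=2$, contradicting $\nul(A)=1$, so $(1,2)$ is not allowed.

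The only technicality is recognizing that, since $G-i$ has a single component rather than being a direct sum of several, the decomposition step in the proof of Lemma~\ref{lem:yam12snip} via \cite[Lemma~3.1]{BFH05} is unnecessary and no appeal to the SNIP or SAP is needed; the kernel-containment argument goes through on the nose for any single yam block.
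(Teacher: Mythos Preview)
Your proof is correct and follows essentially the same route as the paper's: the backward direction invokes Proposition~\ref{prop:cutvtx}, and the forward direction splits into the path case (handled by $M(P_m)=1$) and the generalized-star case (handled by Lemma~\ref{lem:gstarnul2} exactly as in the proof of Lemma~\ref{lem:yam12snip}). The only minor omission is the degenerate case where $G-i$ is empty (i.e., $G=K_1$), which the paper disposes of in one line and which is trivially covered since $\nul(A(i))=0$ by convention.
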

\begin{proof}
Suppose $G - i$ has two or more components. Then $i$ is a cut-vertex and by Proposition~\ref{prop:cutvtx}, $(G,i)$ allows $(1,2)$.  

Suppose $G - i$ has at most one component. If $G-i$ is the empty graph, then $(G,i)$ does not allow $(1,2)$. So assume that $G-i$ has exactly one component. If the component is a path, then it does not allow $(1,2)$ since the maximum nullity of a path is $1$.  Otherwise the component is a generalized star. Let $A\in\mptn(G)$. Just as in the proof of Lemma~\ref{lem:yam12snip}, Lemma~\ref{lem:gstarnul2} implies that if $\nul(A(i)) = 2$, then $\nul(A) \geq 2$. Thus $(G,i)$ does not allow the nullity pair $(1,2)$.
\end{proof}

The following theorem is now immediate from Theorem~\ref{thm:minl12} and Lemma~\ref{lem:yam12}

\begin{theorem}
\label{thm:minl12nosnip}
Let $G$ be a connected graph with vertex $i$. Then $(G,i)$ allows the nullity pair $(1,2)$ if and only if $(G,i)$ is not a yam graph such that $G - i$ has at most one component.
\end{theorem}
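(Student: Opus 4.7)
The plan is to derive Theorem~\ref{thm:minl12nosnip} as a direct combination of Theorem~\ref{thm:minl12} (in particular the equivalence (a)$\Leftrightarrow$(c)) and Lemma~\ref{lem:yam12}, doing a simple case split on whether $(G,i)$ is a yam graph.

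For the forward direction I would argue the contrapositive: assume $(G,i)$ is a yam graph such that $G-i$ has at most one component, and show $(G,i)$ does not allow $(1,2)$. This is exactly the ``only if'' content of Lemma~\ref{lem:yam12}, so nothing more needs to be said.

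For the backward direction, suppose $(G,i)$ is not a yam graph with $G-i$ having at most one component. Either (i) $(G,i)$ is not a yam graph at all, or (ii) $(G,i)$ is a yam graph but $G-i$ has at least two components. In case (i), Theorem~\ref{thm:minl12} (specifically, (c)$\Rightarrow$(a)) gives a matrix $A\in\mptn(G)$ with $i$-nullity pair $(1,2)$ and the $i$-SNIP, which in particular witnesses that $(G,i)$ allows $(1,2)$. In case (ii), Lemma~\ref{lem:yam12} gives the realizability directly (via the cut-vertex construction inside Proposition~\ref{prop:cutvtx}).

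There is essentially no obstacle here, since all heavy lifting has been done in Theorem~\ref{thm:minl12} (which uses the SNIP together with the minor monotonicity machinery to rule out yam graphs) and in Lemma~\ref{lem:yam12} (which separately handles the single-component yam case, where SNIP is unavailable but the Parter--Wiener-based argument in Lemma~\ref{lem:gstarnul2} still forbids $(1,2)$). The only thing a careful write-up must do is observe that the realization produced in case (ii) need not carry the SNIP --- this is precisely why a separate theorem is warranted beyond Theorem~\ref{thm:minl12}, and why the statement does not mention the SNIP at all.
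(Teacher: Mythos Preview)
Your proposal is correct and matches the paper's own argument: the paper states that the theorem is immediate from Theorem~\ref{thm:minl12} and Lemma~\ref{lem:yam12}, and your case split on whether $(G,i)$ is a yam graph is exactly how one unpacks that immediacy.
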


\subsection{Rooted tree}
The parameter $\xi(G)$, introduced and studied in \cite{BFH05}, is defined as the maximum nullity among matrices in $\mptn(G)$ with the SAP. By \cite[Theorem 3.7]{BFH05}, $\xi(T) = 2$ for every tree $T$ that is not a path and \cite[Theorem 3.2]{BFH05} implies $\xi(T-i)\leq 2$. These facts, along with various results from this paper, allow us to characterize the nullity pairs that are allowed by rooted trees with the SNIP.

\begin{theorem}
\label{thm:trees}
Let $T$ be a tree on $n\geq 2$ vertices such that $T\not= P_n$ and let $i\in V(T)$. If $(T,i)$ allows $(k,\ell)$ with the SNIP, then $k\leq 1$ or $\ell \leq 1$. The rooted tree $(T,i)$ allows $(2,1)$ and $(1,1)$ with the SNIP if and only if $T$ has a high-degree vertex $v\not=i$. The rooted tree $(T,i)$ allows $(1,2)$ with the SNIP if and only if $T$ contains a high-degree vertex $v\not=i$ such that $v$ is not adjacent to $i$.
\end{theorem}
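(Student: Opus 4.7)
The plan is to treat the three assertions of the theorem in turn, each time reducing the question to results already proved and then translating into the combinatorial language of high-degree vertices.

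For the first assertion, suppose $A \in \mptn(T)$ has the $i$-SNIP with $i$-nullity pair $(k,\ell)$, and consider the three cases of Theorem~\ref{thm:snipchar} depending on the type of $i$. When $i$ is downer, $A$ itself has the SAP, and $k = \nul(A) \leq \xi(T) = 2$, so if $k \geq 2$ then $\ell = k-1 = 1$. When $i$ is neutral, the matrix $A + tE_{i,i}$ has the SAP for the $t$ from Proposition~\ref{prop:unique_t_downer}, and $\nul(A + tE_{i,i}) = k+1 \leq \xi(T) = 2$ forces $k \leq 1$. When $i$ is upper, $A(i)$ has the SAP and $\ell = \nul(A(i)) \leq \xi(T-i) \leq 2$, so if $\ell \geq 2$ then $k = \ell - 1 = 1$. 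In every case $k \leq 1$ or $\ell \leq 1$.

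For the second assertion, Corollary~\ref{cor:kk1withSNIP} collapses $(1,1)$ and $(2,1)$, so it suffices to handle $(2,1)$. Theorem~\ref{thm:minl21} applies, and because $T$ is a tree it has no $K_3$ minor; the relevant condition becomes ``$(T,i)$ contains $(K_{1,3},\mbox{leaf})$ as a rooted minor,'' equivalently (via part (d) of Theorem~\ref{thm:minl21} and the hypothesis $T \neq P_n$) ``$T$ is not a generalized star with root $i$ at the center.'' Since by definition a generalized star has a \emph{unique} high-degree vertex and $T \neq P_n$ guarantees at least one high-degree vertex exists, this last condition is exactly the assertion that $T$ has a high-degree vertex $v \neq i$.

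For the third assertion, Theorem~\ref{thm:minl12} (with the $\Paw$ case vacuous because $T$ is triangle-free) reduces the problem to showing $(T,i)$ is not a yam graph. The key translation to establish is: \emph{the rooted tree $(T,i)$ is a yam graph if and only if every high-degree vertex of $T$ other than $i$ is adjacent to $i$}. For the forward implication, a high-degree $v \neq i$ not adjacent to $i$ would satisfy $\deg_H(v) = \deg_T(v) \geq 3$ in its component $H$ of $T-i$, so $H$ must be a generalized star with center $v$; the yam condition then forces $i$ adjacent to $v$, a contradiction. For the converse, I would note that any vertex high-degree in a component $H$ of $T-i$ is automatically high-degree in $T$ (since $\deg_T \geq \deg_H$) and hence adjacent to $i$ by hypothesis; two such vertices in the same $H$ would form a cycle through $i$, so $H$ has at most one high-degree vertex and is therefore a path or a generalized star, and in the latter case the same cycle argument pins $i$'s unique neighbor in $H$ as the center, verifying the yam condition. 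I expect this last translation between the yam condition and the high-degree adjacency condition to be the main obstacle, since it requires carefully tracking how vertex degrees transfer between $T$ and the components of $T-i$ across several structural subcases.
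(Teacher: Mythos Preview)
Your proposal is correct. The overall architecture matches the paper's proof---reduce each assertion to Theorem~\ref{thm:minl21} or Theorem~\ref{thm:minl12} and then translate into the language of high-degree vertices---but two of your three parts take slightly different routes that are worth noting.

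For the first assertion, the paper applies Lemma~\ref{lem:snip2sap} once to get $k\leq\xi(T)=2$ and $\ell\leq\xi(T-i)\leq 2$ simultaneously, and then eliminates the single remaining case $(2,2)$ via Corollary~\ref{cor:kk1withSNIP} (since $(2,2)$ with the SNIP would force $(3,2)$ with the SNIP). Your approach instead splits on the type of the index $i$ using Theorem~\ref{thm:snipchar}; this is a bit longer but has the pleasant feature that the neutral case directly gives $k\leq 1$ without a separate elimination step. Both arguments rely on the same external facts $\xi(T)=2$ and $\xi(T-i)\leq 2$ from \cite{BFH05}.

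For the third assertion, the paper invokes condition (b) of Theorem~\ref{thm:minl12}: since $T$ is triangle-free the $\Paw$ minor is impossible, so the question becomes whether $(S(2,1,1),i)$ is a rooted minor, and the paper asserts (tersely) that this happens exactly when some high-degree vertex $v\neq i$ is non-adjacent to $i$. You instead invoke condition (c) and characterize when a rooted tree is a yam graph. Your translation is correct and fully argued, whereas the paper's minor-containment equivalence is left to the reader; in that sense your route is more self-contained, at the cost of a longer combinatorial verification. Either way the two conditions (b) and (c) are already proved equivalent in Theorem~\ref{thm:minl12}, so the choice is a matter of taste.
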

\begin{proof}
Suppose that $(T,i)$ allows $(k,\ell)$ with the SNIP. Recall Lemma~\ref{lem:snip2sap} states that if a symmetric matrix $A$ has the SNIP, then $A$ and $A(i)$ have the SAP. Thus $\xi(T) = 2$ and $\xi(T-i)\leq 2$ imply $k\leq 2$ and $\ell\leq 2$.
Further, $(T,i)$ does not allow $(2,2)$ with the SNIP as Corollary \ref{cor:kk1withSNIP} would imply $(T,i)$ allows $(3,2)$ with the SNIP. Thus, $k\leq 1$ or $\ell \leq 1$.

Suppose that $(T,i)$ allows $(2,1)$ and $(1,1)$ with the SNIP. Since $T$ is not a path, either $T$ contains a high-degree vertex $v\neq i$ or $T$ is a generalized star with central vertex $i$. Lemma~\ref{lem:gstar} implies that if $T$ is a generalized star with central vertex $i$, then $T$ does not allow the nullity pair $(2,1)$. To prove the converse, suppose that $T$ contains a high-degree vertex $v\neq i$. Then $(T,i)$ contains $(K_{1,3}, \mbox{leaf})$ as minor. Since $(K_{1,3}, \mbox{leaf})$ allows $(2,1)$ with the SNIP, Theorem~\ref{thm:minmono} 
implies $(T,i)$ allows the nullity pair $(2,1)$ with the SNIP.

By Theorem \ref{thm:minl12}, $(T,i)$ allows $(1,2)$ with the SNIP if and only if $(S(2,1,1),i)$ is a rooted minor of $(T,i)$. This occur if and only if $T$ contains a vertex $v\not=i$ of degree at least 3 such that $v$ is not adjacent to $i$.
\end{proof}

\section{Developing the SNIP and its consequences}
\label{sec:theory}
In Section~\ref{sec:minors} we saw the power of the Minor Monotonicity Theorem. 
Here we provide the theoretical background of this theorem and other applications of the SNIP. 
Let $G$ be a graph with vertex $i$ and let $A\in\mptn(G)$. We begin by considering two types of perturbations of $A$ obtained by group actions. 

Let $\mptncl(G)$ be the \emph{topological closure} of $\mptn(G)$, i.e., all symmetric matrices whose $i,j$-entry is zero whenever $\{i,j\}\notin E(G)$ and $i\neq j$.
Note that the entries of a matrix in $\mptncl(G)$ that correspond to an edge are allowed to be zero. The set $\mptncl(G)$ is a subspace of the vector space $\msym$ and, in particular, is an additive group. Observe that any perturbation of $A\in\mptn(G)$ that takes the form
\[
    A \mapsto A + B
\]
for $B\in\mptncl(G)$ and $B$ sufficiently small preserves the pattern of $A$, i.e., $A + B \in \mptn(G)$.

We also consider the subgroup of the general linear group given by: 
\[
    \GL{i} := \{Q\in\mat : \det(Q) \neq 0,\ Q[i,i) = \bzero\trans\}.
\]
For example, when $i = 1$, the matrices in  $\GL{i}$ are of the form  
\[
    Q = \begin{bmatrix}
     q_{1,1} & \bzero\trans \\
     Q(1,1] & Q(1)
    \end{bmatrix}
\]
where $q_{1,1}\neq 0$ and $Q(1)$ is nonsingular.  By direct computation, we have $(Q\trans AQ)(i) = Q(i)\trans A(i)Q(i)$ for $Q\in \GL{i}$.  Thus, any perturbation of $A\in\msym$ that takes the form
\[
    A \mapsto Q\trans AQ
\]
for $Q \in \GL{i}$ preserves the $i$-nullity pair of $A$.

\begin{figure}[h]
\begin{center}
\begin{tikzpicture}
\clip (-4.5,-2) rectangle (4.5,2);
\draw (-3,0) -- (3,0);
\draw ([shift={(-40:5)}]-5,0) arc (-40:40:5);
\node[rectangle, fill=white, draw=none, text=black] at (-3,0) {same pattern}; 
\node[rectangle, fill=white, draw=none, text=black] at ([shift={(-20:5)}]-5,0) {same $i$-nullity pair};

\coordinate (a) at (0,0);
\node[label={225:$A$}, fill] (A) at (a) {};

\node[rectangle, fill=white, draw=none, text=black] at (3,0) {$A + B$}; 
\node[rectangle, fill=white, draw=none, text=black] at ([shift={(20:5)}]-5,0) {$Q\trans AQ$};
\end{tikzpicture}
\end{center}
\caption{Illustration of the two perturbations on $A$.}
\label{fig:twoperturb}
\end{figure}
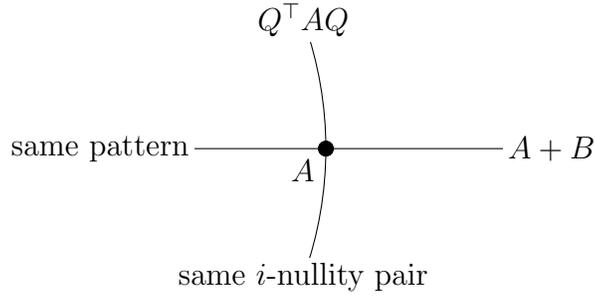

We consider the orbits of $A$ under the additive group action and the multiplicative group action in a small neighborhood of $A$. These can be viewed as geometric objects in the space $\msym$, as depicted in Figure~\ref{fig:twoperturb}.
Study the tangent spaces of these orbits naturally leads to the SNIP. More details about these techniques and related properties can be found in \cite{IEPGZF22}.

\begin{definition}
Let $U$ and $W$ be finite dimensional vector spaces over $\mathbb{R}$.  Let $F$ be a differentiable function from a domain of $U$ containing $\bu_0\in U$ to $W$.  Then the \emph{derivative} of $F$ at $\bu_0$ is a linear operator $\dot{F}$ defined by  
\[
    \dot{F}\cdot\bd = \lim_{t\rightarrow 0} \frac{F(\bu_0 + t\bd) - F(\bu_0)}{t}
\]
for each vector $\bd\in U$.
\end{definition}

This definition is equivalent to the classical {\it total derivative}, but gives a convenient notation for dealing with derivatives in multiple variables within matrix spaces.

\begin{example}
\label{ex:derivative}
Let $G$ be a graph on $n$ vertices,  $A\in\mptn(G)$, and $i\in V(G)$.  Let $F$ be the function  
\begin{equation}
\label{eq:supperturb}
    F(B,Q) = Q\trans AQ + B,
\end{equation}
defined for $B\in\mptncl(G)$ near $O$ and $Q\in\GL{i}$ near $I$,
and choose perturbation matrices $\Delta B \in \mptncl(G)$ and $\Delta Q \in \mat$ with $\Delta Q[i,i) = \bzero\trans$.  By direct computation, we have  
\[
    \begin{aligned}
    &\mathrel{\phantom{=}}\lim_{t\to 0}\frac{F(O + t\Delta B, I +t \Delta Q) - F(O,I)}{t} \\
    &= \lim_{t\to 0}\frac{(I + t\Delta Q)\trans A(I + t\Delta Q) + t\Delta B - A}{t} \\
    &= \lim_{t\to 0} \Delta Q\trans A + A\Delta Q + t\Delta Q\trans A\Delta Q + \Delta B \\
    & = \Delta Q\trans A + A\Delta Q + \Delta B.
    \end{aligned}
\]
The derivative of $F$ at $(O,I)$ is thus the linear operator defined by   \begin{equation}\label{eq:derivativeFatOI}
    \dot{F} \cdot (\Delta B, \Delta Q) = \Delta Q\trans A + A\Delta Q + \Delta B.
\end{equation}
This function can be specialized to a function of either argument by fixing the other:
Fixing $Q$ gives a map $F_B$ that takes $B$ to $Q\trans AQ +B$, and fixing $B$ gives a map $F_Q$ that takes $Q$ to $Q\trans AQ +B$.
Note that $\dot{F}_B$ and $\dot{F}_Q$ satisfy
\[
    \begin{aligned}
    \dot{F}_B \cdot \Delta B &= \Delta B, \\
    \dot{F}_Q \cdot \Delta Q &= \Delta Q\trans A + A\Delta Q.
    \end{aligned}
\]
This agrees with the chain rule  
\[
    \dot{F} \cdot (\Delta B, \Delta Q) = \dot{F}_B \cdot \Delta B + \dot{F}_Q \cdot \Delta Q.
\]
\end{example}

\begin{example}
Let $A$ be the all-ones matrix in $\mptn(K_2)$ and $i = 1$.  Consider the function $F$ from Equation~\eqref{eq:supperturb}.  Then we may write  
\[
    Q = \begin{bmatrix}
     x & 0 \\
     y & z
    \end{bmatrix}\text{ and }
    B = \begin{bmatrix}
     a & b \\
     b & c
    \end{bmatrix}.
\]
Thus, 
\[
    Q\trans AQ + B = \begin{bmatrix}
     x^2 + 2xy + y^2 + a & xz + yz + b \\
     xz + yz + b & z^2 + c
    \end{bmatrix}.
\]
Viewing $F$ as a function sending the six variables $x,y,z,a,b,c$ to the three independent entries in the symmetric matrix, we obtain that the total derivative of this map at $(x,y,z) = (1,0,1)$ and $(a,b,c) = (0,0,0)$ is  
\[
    \begin{bmatrix}
    2x + 2y & 2x + 2y & 0 & 1 & 0 & 0 \\
    z & z & x + y & 0 & 1 & 0 \\
    0 & 0 & 2z & 0 & 0 & 1 \\
    \end{bmatrix} = 
    \begin{bmatrix}
    2 & 2 & 0 & 1 & 0 & 0 \\
    1 & 1 & 1 & 0 & 1 & 0 \\
    0 & 0 & 2 & 0 & 0 & 1 \\
    \end{bmatrix}. 
\]
Note that this is the matrix representation of $\dot{F}$ by reading $\Delta Q$ with the basis $\{E_{1,1}, E_{2,1}, E_{2,2}\}$ and $\Delta B$ with the basis $\{E_{1,1}, E_{1,2} + E_{2,1}, E_{2,2}\}$. Therefore, considering the derivative $\dot{F}$ as a linear operator avoids the hassle of choosing the basis. 
\end{example}

The following proposition considers the tangent spaces and the normal spaces.  Here we use the inner product of matrices $\inp{A}{B} = \frac{1}{2}\tr(B\trans A)$ for symmetric matrices.  Note that the scalar $\frac{1}{2}$ does not change the orthogonality, but it has the benefit of giving $\|E_{i,j} + E_{j,i}\| = 1$ when $i \neq j$.  

\begin{proposition}
\label{prop:snipspaces}
Let $G$ be a graph on $n$ vertices, $A\in\mptn(G)$, and $i\in V(G)$. Let $F$ be defined as in Equation~\eqref{eq:supperturb},  and let $\dot{F}_B$, $\dot{F}_Q$ be the partial derivatives at $B = O$ and $Q = I$.  Then the following hold:
\begin{enumerate}[label={{\rm(\alph*)}}]
\item $\range(\dot{F}_B) = \mptncl(G)$.
\item $\range(\dot{F}_Q) = \{L\trans A + AL: L\in\mat,\ L[i,i) = \bzero\trans\}$.
\item $\range(\dot{F}_B)^\perp = \{X\in\msym: A\circ X = O,\ I\circ X = O\}$.
\item $\range(\dot{F}_Q)^\perp = \{X\in\msym: (AX)[i,i] = 0,\ (AX)(i,:] = O\}$.
\end{enumerate}
\end{proposition}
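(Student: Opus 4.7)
My plan is to work directly from the explicit formulas for $\dot{F}_B$ and $\dot{F}_Q$ derived in Example~\ref{ex:derivative}, and then translate orthogonality in the trace inner product on $\msym$ into entrywise conditions on the matrix $X$.

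First I will handle parts (a) and (b), which should be essentially immediate. Since $\dot{F}_B\cdot\Delta B=\Delta B$, the range of $\dot{F}_B$ is exactly its domain $\mptncl(G)$. Since $\dot{F}_Q\cdot\Delta Q=\Delta Q\trans A+A\Delta Q$ and $\Delta Q$ ranges over all matrices in $\mat$ with $\Delta Q[i,i)=\bzero\trans$, renaming $\Delta Q$ as $L$ gives (b).

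For (c), I will compute the annihilator of $\mptncl(G)$ inside $\msym$ using a convenient basis. A natural basis of $\mptncl(G)$ consists of $E_{j,j}$ for $j\in[n]$ together with $E_{j,k}+E_{k,j}$ for each edge $\{j,k\}\in E(G)$. With $\inp{M}{N}=\tfrac{1}{2}\tr(N\trans M)$, a symmetric $X$ is orthogonal to $\mptncl(G)$ if and only if $X_{j,j}=0$ for every $j$ and $X_{j,k}=0$ for every edge $\{j,k\}\in E(G)$. The former is $I\circ X=O$; because $A\in\mptn(G)$ has nonzero off-diagonal entries precisely at the edges of $G$, the latter together with $I\circ X=O$ is equivalent to $A\circ X=O$.

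For (d), the crux is the trace identity $\inp{L\trans A+AL}{X}=\tr(L\trans AX)$ for symmetric $A$ and $X$. To establish it I will expand $\tfrac{1}{2}\tr(X(L\trans A+AL))$ into two summands, then use the cyclic property of trace together with $\tr(M)=\tr(M\trans)$ and the symmetries $A\trans=A$, $X\trans=X$ to show that the two summands are equal, collapsing the expression to $\tr(L\trans AX)$. Writing $\tr(L\trans AX)=\sum_{j,k}L_{j,k}(AX)_{j,k}$ exhibits the pairing as a sum whose coefficients are the entries of $AX$. The constraint $L[i,i)=\bzero\trans$ forces $L_{i,k}=0$ for $k\neq i$ while leaving all remaining entries of $L$ free, so the pairing vanishes for every admissible $L$ exactly when $(AX)_{j,k}=0$ at every free position, i.e., when $(AX)(i,:]=O$ and $(AX)[i,i]=0$. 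The only substantive calculation is the symmetry-based collapsing in (d); the remainder of the argument is bookkeeping of entry positions.
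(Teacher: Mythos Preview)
Your proposal is correct and follows essentially the same approach as the paper: both derive (a) and (b) immediately from the formulas in Example~\ref{ex:derivative}, both obtain (c) by reading off which entries of $X$ must vanish in the orthogonal complement of $\mptncl(G)$, and both obtain (d) via the trace identity $\inp{L\trans A+AL}{X}=\tr(L\trans AX)$ (the paper writes this as $2\inp{X}{L\trans A+AL}=4\inp{AX}{L}$) followed by the observation that the free entries of $L$ are exactly those outside row $i$ together with position $(i,i)$. Your presentation of (c) via an explicit basis and of (d) via the coordinate expansion $\tr(L\trans AX)=\sum_{j,k}L_{j,k}(AX)_{j,k}$ is slightly more explicit than the paper's, but the argument is the same.
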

\begin{proof}
Note that every matrix in $\GL{i}$ near $I$ can be written as $I + L$ for some $L \in \mat$ with $L[i,i) = \bzero\trans$.  The ranges of $\dot{F}_B$ and $\dot{F}_Q$ are straightforward from Example~\ref{ex:derivative}.

For $\mptncl(G)$, since the diagonal entries and the entries corresponding to an edge are free, the orthogonal complement of $\mptncl(G)$ has free entries on non-edges.

Next we consider the $\range(\dot{F}_Q)$, which contains all symmetric matrices $X$ such that $\inp{X}{LA + AL} = 0$ for all $L\in\mat$ with $L[i,i) = \bzero\trans$.  By direct computation, we have  
\[
    \begin{aligned}
    2\inp{X}{L\trans A + AL} &= \tr(ALX) + \tr(L\trans AX) \\
    &= \tr(XL\trans A) + \tr(L\trans AX) \\
    &= \tr(L\trans AX) + \tr(L\trans AX) \\
    &= 4\inp{AX}{L},
    \end{aligned}
\]
which is zero only for all such $L$ when $(AX)[i,i] = 0$ and $(AX)(i,:] = O$. 
\end{proof}

With the above set up, now we are ready to justify the definition of the SNIP.  

\begin{proposition}
\label{prop:snipequiv}
Let $G$ be a graph on $n$ vertices, $A\in\mptn(G)$, and $i\in V(G)$.  Let $F$ be defined as in Equation~\eqref{eq:supperturb} and let $\dot{F}$, $\dot{F}_B$, $\dot{F}_Q$ be the derivative and the partial derivatives at $B = O$ and $Q = I$.  Then the following are equivalent:
\begin{enumerate}[label={{\rm(\alph*)}}]
\item $A$ has the $i$-SNIP.
\item $\dot{F}$ is surjective.
\item $\range(\dot{F}) = \range(\dot{F}_B) + \range(\dot{F}_Q) = \msym$.
\item $\range(\dot{F}_B)^\perp \cap \range(\dot{F}_Q)^\perp = \{O\}$.
\end{enumerate}
\end{proposition}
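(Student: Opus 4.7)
The plan is to prove the equivalences by going around the cycle (a)$\Leftrightarrow$(d)$\Leftrightarrow$(c)$\Leftrightarrow$(b), with all of the real content concentrated in the step (a)$\Leftrightarrow$(d); the other two equivalences are essentially formal consequences of the setup built up in Example~\ref{ex:derivative} and Proposition~\ref{prop:snipspaces}.

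For (b)$\Leftrightarrow$(c), I would invoke the chain-rule decomposition $\dot{F}\cdot(\Delta B,\Delta Q) = \dot{F}_B\cdot\Delta B + \dot{F}_Q\cdot\Delta Q$ already recorded at the end of Example~\ref{ex:derivative}. This gives $\range(\dot{F}) = \range(\dot{F}_B) + \range(\dot{F}_Q)$ as subspaces of $\msym$, so $\dot{F}$ is surjective onto $\msym$ precisely when the equality in (c) holds. For (c)$\Leftrightarrow$(d), I would apply the standard identity $(U + V)^\perp = U^\perp \cap V^\perp$ for subspaces $U,V$ of the finite-dimensional inner product space $\msym$, together with the observation that a subspace of $\msym$ is all of $\msym$ if and only if its orthogonal complement is trivial; taking $U = \range(\dot{F}_B)$ and $V = \range(\dot{F}_Q)$ yields the equivalence.

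The substantive step is (a)$\Leftrightarrow$(d). Combining parts (c) and (d) of Proposition~\ref{prop:snipspaces}, the intersection $\range(\dot{F}_B)^\perp \cap \range(\dot{F}_Q)^\perp$ is exactly the space of symmetric $X$ satisfying
\[
A\circ X = O, \qquad I\circ X = O, \qquad (AX)[i,i] = 0, \qquad (AX)(i,:] = O.
\]
The $i$-SNIP asks for the same list with the single scalar equation $(AX)[i,i] = 0$ dropped, so the equivalence reduces to verifying that this equation is automatic given the other three. The $(i,i)$-entry of $AX$ equals $\sum_{k} A_{i,k} X_{k,i}$: the term with $k = i$ vanishes because $X_{i,i} = 0$ by $I\circ X = O$, and for each $k \ne i$ the product $A_{i,k} X_{k,i}$ vanishes because either $\{i,k\}\notin E(G)$ and $A_{i,k}=0$, or $\{i,k\}\in E(G)$ and then $X_{i,k} = X_{k,i} = 0$ by $A\circ X = O$. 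Thus the two sets of constraints cut out the same subspace of $\msym$, and (a) — which says this subspace is $\{O\}$ — is identical to (d).

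I don't anticipate any real obstacle; the only subtle point is this small redundancy check, which ensures that the diagonal entry constraint $(AX)[i,i] = 0$ hidden inside $\range(\dot{F}_Q)^\perp$ does not strengthen the SNIP definition. Beyond that, the proof is just a packaging of the chain rule, Proposition~\ref{prop:snipspaces}, and orthogonal-complement duality in $\msym$.
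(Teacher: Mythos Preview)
Your proposal is correct and follows essentially the same route as the paper: the equivalences (b)$\Leftrightarrow$(c)$\Leftrightarrow$(d) via the chain rule and orthogonal-complement duality, and then (a)$\Leftrightarrow$(d) by recognizing that the constraint $(AX)[i,i]=0$ is redundant once $A\circ X=O$ and $I\circ X=O$ hold. Your explicit entrywise verification of that redundancy is slightly more detailed than the paper's one-line observation, but the argument is otherwise identical.
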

\begin{proof}
From Equation \eqref{eq:derivativeFatOI} we know  
\[
    \dot{F} \cdot (\Delta B, \Delta Q) = \dot{F}_B \cdot \Delta B + \dot{F}_Q \cdot \Delta Q.
\]
 Hence $\range(\dot{F}) = \range(\dot{F}_B) + \range(\dot{F}_Q)$, so statements (b) and (c) are equivalent by definition.  The equivalence between statements (c) and (d) follow from basic linear algebra facts about subspaces and their orthogonal complements.  Finally, we show that statements (d) and (a) are equivalent.  Observing $\range(\dot{F}_B)^\perp$ and $\range(\dot{F}_Q)^\perp$ in Proposition~\ref{prop:snipspaces}, we notice that $A\circ X = O$ and $I\circ X = O$ already implies $(AX)[i,i] = 0$, so the intersection of these two subspaces is  
\[
    \{X\in\msym: A\circ X = O,\ I\circ X = O, (AX)(i,:] = O\}.
\]
Therefore, the intersection is trivial if and only if $A$ has the $i$-SNIP.
\end{proof}

When the orbits of $A$ under $B$ and $Q$ (see Figure~\ref{fig:twoperturb}) are locally manifolds, 
we say that an intersection at $A$ is a \emph{transversal intersection} when it satisfies
the property that the span of the tangent spaces of those manifolds at $A$ is the entire ambient space.
However, we note that the proofs in this paper do not require the orbits to be manifolds.

Now that we have the basic background on the SNIP, we move our attention to the inverse function theorem, which will be the key ingredient in the proofs of the Minor Monotonicity (Theorem~\ref{thm:minmono}).  Note that the inverse function theorem is often stated for maps with bijective derivatives. Here we include a surjective version of the inverse function theorem; see, e.g., \cite{IEPGZF22}.

\begin{theorem}[Inverse Function Theorem]
\label{thm:invftsur}
Let $U$ and $W$ be finite-dimensional vector spaces over $\mathbb{R}$. Let $F$ be a smooth function from an open subset of $U$ to $W$ with $F(\bu_0) = \bw_0$. If the derivative $\dot{F}$ at $\bu_0$ is surjective, then there is an open subset $W'\subseteq W$ containing $\bw_0$ and a smooth function $T:W'\rightarrow U$ such that $T(\bw_0) = \bu_0$ and $F\circ T$ is the identity map on $W'$. 
\end{theorem}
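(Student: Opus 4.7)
The plan is to derive this surjective version of the Inverse Function Theorem from the classical bijective one, by restricting $F$ to a suitable affine subspace of $U$ on which the derivative becomes a linear isomorphism.

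First, because $\dot{F}\colon U\to W$ is a surjective linear map between finite-dimensional real vector spaces, I would choose a linear complement $U_0$ of $\ker(\dot{F})$ in $U$, so that $U = U_0 \oplus \ker(\dot{F})$. Then $\dim U_0 = \dim W$, and the restriction $\dot{F}|_{U_0}\colon U_0 \to W$ is a linear bijection.

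Second, I would consider the smooth map $\tilde{F}$ obtained by restricting $F$ to an open neighborhood of $\bu_0$ inside the affine subspace $\bu_0 + U_0$. Identifying $\bu_0 + U_0$ with an open subset of the finite-dimensional vector space $U_0$ via translation, we have a smooth map between vector spaces of equal finite dimension. By the chain rule (using that the inclusion of $\bu_0 + U_0$ into $U$ is affine with linear part the inclusion $U_0 \hookrightarrow U$), the derivative of $\tilde{F}$ at $\bu_0$ equals $\dot{F}|_{U_0}$, which is a linear bijection by the first step.

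Third, I would apply the classical (bijective) Inverse Function Theorem to $\tilde{F}$, obtaining an open neighborhood $W' \subseteq W$ of $\bw_0$ and a smooth local inverse $T\colon W' \to \bu_0 + U_0$ with $T(\bw_0) = \bu_0$ and $\tilde{F} \circ T = \mathrm{id}_{W'}$. Composing $T$ with the inclusion $\bu_0 + U_0 \hookrightarrow U$ yields the desired smooth map into $U$ satisfying $F \circ T = \tilde{F} \circ T = \mathrm{id}_{W'}$.

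The main obstacle is essentially nil: the surjective Inverse Function Theorem is a routine repackaging of the classical one. The only small piece of bookkeeping is to note that the classical IFT, often stated for maps $\mathbb{R}^n \to \mathbb{R}^n$, transfers verbatim to smooth maps between finite-dimensional real vector spaces of equal dimension by fixing linear isomorphisms to $\mathbb{R}^n$ and conjugating; this is why the choice of complement $U_0$ in the first step, rather than any canonical construction, is harmless.
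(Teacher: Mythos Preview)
Your proof is correct and is the standard reduction of the surjective Inverse Function Theorem to the classical bijective version via restriction to an affine complement of the kernel. Note, however, that the paper does not prove this theorem at all: it is stated as a known result with a reference (``see, e.g., \cite{IEPGZF22}''), so there is no proof in the paper to compare against. Your argument supplies exactly the kind of routine justification one would give if asked to fill in the details.
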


\subsection{Supergraph lemma}
This section will be devoted to show the (extended) supergraph lemma. We start proving the following basic version of the supergraph lemma.

\begin{restatable}[Supergraph lemma]{lemma}{suplem}
\label{lem:supergraph}
Let $G$ be a spanning subgraph of $H$ and $i\in V(G)$.  Suppose $A\in\mptn(G)$ has the $i$-SNIP.  Then there is a matrix $A'\in\mptn(H)$ such that $A$ and $A'$ have the same $i$-nullity pair, $A'$ has the $i$-SNIP, and $A'$ can be chosen arbitrarily close to $A$.
\end{restatable}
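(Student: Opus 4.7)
The strategy is to apply the surjective inverse function theorem (Theorem~\ref{thm:invftsur}) to the perturbation map $F:\mptncl(G)\times \GL{i}\to\msym$ defined by $F(B,Q)=Q\trans A Q+B$ from Example~\ref{ex:derivative}. By Proposition~\ref{prop:snipequiv}, the hypothesis that $A$ has the $i$-SNIP is exactly the surjectivity of $\dot F$ at $(O,I)$. The inverse function theorem then produces a smooth local right inverse $T$ on some neighborhood $W$ of $A=F(O,I)$ with $T(A)=(O,I)$ and $F\circ T=\mathrm{id}_W$.

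Next, I plan to feed $T$ a matrix $M\in\mptn(H)$ close to $A$. Because $G$ is a spanning subgraph of $H$, any matrix obtained from $A$ by assigning small nonzero values on the positions indexed by $E(H)\setminus E(G)$ lies in $\mptn(H)$, and such $M$ can be chosen arbitrarily close to $A$ and inside $W$. Writing $T(M)=(B_M,Q_M)$ gives
\[
    A' := Q_M\trans A Q_M = M-B_M,
\]
so $A'$ is obtained from $A$ by congruence via $Q_M\in\GL{i}$ and hence has the same $i$-nullity pair as $A$.

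The core of the argument is the entrywise verification that $A'\in\mptn(H)$. For indices $u\neq v$ with $\{u,v\}\notin E(H)$, both $M_{uv}=0$ and $(B_M)_{uv}=0$ (since $E(G)\subseteq E(H)$ forces non-edges of $H$ to be non-edges of $G$), so $A'_{uv}=0$. For $\{u,v\}\in E(H)\setminus E(G)$ we still have $(B_M)_{uv}=0$, so $A'_{uv}=M_{uv}\neq 0$. The subtle case, and the main obstacle, is an edge $\{u,v\}\in E(G)$, where $B_M$ may well have a nonzero entry: I must ensure that $M_{uv}-(B_M)_{uv}\neq 0$. This is handled by continuity: since $T$ is continuous with $T(A)=(O,I)$, we have $B_M\to O$ as $M\to A$, hence $A'_{uv}\to A_{uv}\neq 0$, which suffices once $M$ is taken sufficiently close to $A$.

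The same continuity shows $A'\to A$, so $A'$ can be made arbitrarily close to $A$. Finally, the $i$-SNIP is preserved because it is equivalent to the full column rank of a linear map on $\msym$ whose coefficients depend linearly (hence continuously) on the matrix in question (cf.\ the verification matrix discussion following Example~\ref{ex:verification}); full column rank is an open condition, so $A'$ inherits the $i$-SNIP from $A$ whenever $A'$ is close enough to $A$. Intersecting the finitely many open neighborhoods of $A$ corresponding to membership in $W$, membership in $\mptn(H)$, and the $i$-SNIP then yields the desired $A'$.
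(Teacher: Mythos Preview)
Your proof is correct and follows essentially the same route as the paper: apply Proposition~\ref{prop:snipequiv} to get surjectivity of $\dot F$, invoke Theorem~\ref{thm:invftsur} to solve $F(B_M,Q_M)=M$ for a nearby $M\in\mptn(H)$, and set $A'=Q_M\trans A Q_M=M-B_M$. The only cosmetic difference is in the SNIP-preservation step: the paper argues via the tangent-space characterization, observing that $\{L\trans A'+A'L\}+\mptncl(G)=\msym$ persists by continuity and then enlarging $\mptncl(G)\subseteq\mptncl(H)$, whereas you argue directly that injectivity of $X\mapsto(A'\circ X,\,I\circ X,\,(A'X)(i,:])$ is an open condition in $A'$; these are dual formulations of the same fact via Proposition~\ref{prop:snipequiv}.
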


We will use the function $F$ as in Equation~\eqref{eq:supperturb}.  As we will see in the proof, the SNIP and the inverse function theorem will guarantee the existence of $B'$ and $Q'$ such that $M = F(B',Q')$ for any $M\in\mptn(H)$ nearby $A$.  Thus, the matrix $A' = F(O,Q')$ is the desired matrix with the same $i$-nullity pair as $A$ and with the correct pattern.  The proof is illustrated in Figure~\ref{fig:supergraph}.

\begin{figure}[h]
\begin{center}
\begin{tikzpicture}
\clip (-4.5,-2) rectangle (4.5,2);
\draw (-3,0) -- (3,0);
\draw ([shift={(-40:5)}]-5,0) arc (-40:40:5);
\node[rectangle, fill=white, draw=none, text=black] at (-3,0) {same pattern}; 
\node[rectangle, fill=white, draw=none, text=black] at ([shift={(-20:5)}]-5,0) {same $i$-nullity pair};

\coordinate (a) at (0,0);
\node[label={225:$A$}, fill] (A) at (a) {};

\node[rectangle, fill=white, draw=none, text=black] at (3,0) {$A + B$}; 
\node[rectangle, fill=white, draw=none, text=black] at ([shift={(20:5)}]-5,0) {$Q\trans AQ$};

\coordinate (a1p) at ([shift={(10:5)}]-5,0);
\coordinate (m1) at ([xshift=-1cm]a1p); 

\draw[postaction={decorate}, decoration={markings, mark=at position 0.6 with {\arrow{>}}}, very thick, black] (a) arc (0:10:5);
\draw[postaction={decorate}, decoration={markings, mark=at position 0.6 with {\arrow{>}}}, very thick, black] (a1p) -- (m1);
\node[label={right:$A' = F(O,Q')$}, fill] (A1p) at (a1p) {};
\node[label={left:$M = F(B',Q')$}, fill] (M1) at (m1) {};
\end{tikzpicture}
\end{center}
\caption{Illustration of the proof of Lemma~\ref{lem:supergraph}.}
\label{fig:supergraph}
\end{figure}
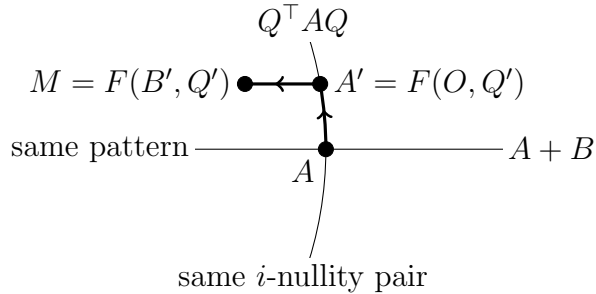

\begin{proof}[Proof of Lemma~\ref{lem:supergraph}]
Let $F$ be as defined in Equation~\eqref{eq:supperturb} and let $\dot{F}$ be its derivative at $B = O$ and $Q = I$.  Since $A$ has the $i$-SNIP, $\dot{F}$ is surjective by Proposition \ref{prop:snipequiv}.  By Theorem~\ref{thm:invftsur}, for each matrix $M\in\msym$ close enough to $A$, there are matrices $B'$ and $Q'$ such that $F(B',Q') = M$.  Since $G$ is a spanning subgraph of $H$, we may change the entries corresponding to $E(H)\setminus E(G)$ in $A$ into small nonzero values to obtain a matrix $M\in\mptn(H)$.  Although $M$ might not have the same $i$-nullity pair, the matrix $A' = F(O,Q')$ satisfies $A' = (Q')\trans AQ'$ and $A' = M - B'$.  When $B'$ and $Q'$ are small perturbations, $A'$ has the same pattern as $M$, i.e., $A'\in\mptn(H)$, and $A'$ has the same $i$-nullity pair as $A$.  

Since the perturbation is small, we still have   
\[
    \{L\trans A' + A'L: L\in\mat,\ L[i,i) = \bzero\trans\} + \mptncl(G) = \msym.
\]
Since $\mptncl(G)\subseteq\mptncl(H)$, the matrix $A'$ has the $i$-SNIP if we choose $M$ close enough to $A$.
\end{proof}

\begin{remark}
In the proof of Lemma~\ref{lem:supergraph}, since  $A'$ and $A$ are congruent matrices, they share the same inertia.
\end{remark}

\begin{restatable}[Extended supergraph lemma]{lemma}{exsuplem}
\label{cor:exsupergraph}
Let $G$ be a subgraph of $H$ and $i\in V(G)$.  Suppose $A\in\mptn(G)$ has the $i$-SNIP.  Then there is a matrix $A'\in\mptn(H)$ such that $A$ and $A'$ have the same $i$-nullity pair, and $A'$ has the $i$-SNIP.
\end{restatable}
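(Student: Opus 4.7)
The plan is to reduce the Extended Supergraph Lemma to the basic Supergraph Lemma (Lemma~\ref{lem:supergraph}) by first inflating $A$ to a matrix on the full vertex set $V(H)$ using a direct sum with a nonsingular ``filler'' matrix, and then applying the spanning case. The key observation is that Proposition~\ref{prop:directsum} tells us exactly how to preserve the $i$-SNIP under direct sums.

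First I would set $m = |V(H)\setminus V(G)|$ and, after relabelling so that $V(G) \subseteq V(H)$ in a natural way, define
\[
\widetilde{A} = A \oplus I_m \in \msym[n+m].
\]
Since $I_m$ is nonsingular and $A$ has the $i$-SNIP, Proposition~\ref{prop:directsum} implies that $\widetilde{A}$ has the $i$-SNIP. Moreover, $\nul(\widetilde{A}) = \nul(A)$ and $\nul(\widetilde{A}(i)) = \nul(A(i))$, so $\widetilde{A}$ has the same $i$-nullity pair as $A$. The zero-nonzero pattern of $\widetilde{A}$ corresponds to the graph $\widetilde{G}$ obtained from $G$ by adding the $m$ vertices of $V(H)\setminus V(G)$ as isolated vertices, so $\widetilde{G}$ is a spanning subgraph of $H$ and $\widetilde{A} \in \mptn(\widetilde{G})$.

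Now I apply Lemma~\ref{lem:supergraph} to the spanning subgraph $\widetilde{G}$ of $H$ and the matrix $\widetilde{A}$: this produces a matrix $A' \in \mptn(H)$ with the same $i$-nullity pair as $\widetilde{A}$ (hence the same as $A$) and with the $i$-SNIP. This yields the desired matrix, completing the proof.

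There is no real obstacle here, since both tools needed (Proposition~\ref{prop:directsum} for preservation of the SNIP under direct sums with nonsingular blocks, and Lemma~\ref{lem:supergraph} for the spanning-subgraph case) are already in hand. The only small subtlety to check is that the index $i$ refers to the same vertex in $\widetilde{A}$ and in $A'$, which is automatic from the labelling convention, and that the nullity-pair identities behave correctly under direct sum with a nonsingular matrix, which is immediate from $\nul(M \oplus N) = \nul(M) + \nul(N)$ applied to both $\widetilde{A}$ and $\widetilde{A}(i) = A(i) \oplus I_m$.
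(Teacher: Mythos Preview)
Your proposal is correct and follows essentially the same approach as the paper: form $A \oplus I_m$ to land in a spanning subgraph of $H$, invoke Proposition~\ref{prop:directsum} to retain the $i$-SNIP, and then apply the basic Supergraph Lemma. The paper's proof is identical in substance, just slightly terser in its justification of why $A \oplus I_h$ keeps the $i$-SNIP and the $i$-nullity pair.
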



\begin{proof}
Let $h = |V(H)| - |V(G)|$.  Let $A$ be a matrix in $\mptn(G)$ with the $i$-SNIP.  Then $A\oplus I_h$ is a matrix with the same $i$-nullity pair as $A$ with the $i$-SNIP.  By applying the supergraph lemma (Lemma~\ref{lem:supergraph}) to graphs $G\dunion\overline{K_h}$ and $H$ using the matrix $A\oplus I_h \in \mptn(G\dunion\overline{K_h})$, we obtain a matrix $A'$ with the $i$-SNIP and the same $i$-nullity pair as both $A\oplus I_h$ and $A$.
\end{proof}

\subsection{Decontraction lemma}
This section is devoted to proving the following decontraction lemma.

\begin{restatable}[Decontraction lemma]{lemma}{declem}
\label{lem:decontraction}
Let $G$ be a graph that is obtained from $H$ by contraction along the edge $\{u,v\}$ such that $u$ and $v$ have disjoint neighborhoods.  Suppose $A\in\mptn(G)$ has the $i$-SNIP.  Then there is a matrix $A'\in\mptn(H)$ such that $A$ and $A'$ have the same $i$-nullity pair, and $A'$ has the $i$-SNIP.  If $i\in V(G)$ is the new vertex through the contraction of $\{u,v\}$, then $i\in V(H)$ can be designated as either $u$ or $v$.
\end{restatable}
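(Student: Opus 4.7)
The plan is to produce $A'$ by first constructing an auxiliary matrix $\hat{A}\in\mptncl(H)$ (with a zero entry at the edge $\{u,v\}$ of $H$, so that $\hat{A}\in\mptn(G')$ where $G'$ is $H$ with the edge $\{u,v\}$ removed), verifying that $\hat{A}$ has the same $i$-nullity pair as $A$, showing that $\hat{A}$ satisfies the derivative criterion for the $i$-SNIP of Proposition~\ref{prop:snipequiv}, and finally applying the inverse function theorem (Theorem~\ref{thm:invftsur}) to perturb $\hat{A}$ into $\mptn(H)$, in the spirit of the proof of Lemma~\ref{lem:supergraph}.

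Writing $A$ in block form relative to the partition $\{w\},N_H(u),N_H(v),R$, where $w$ is the contracted vertex and $R$ is the remaining vertex set, the disjoint-neighborhood condition makes the two sub-rows $b_u,b_v$ of $A$ at position $w$ entrywise nonzero. For a parameter $\tau\in\mathbb{R}$ I would define
\[
\hat{A}=\begin{bmatrix} a & b_u\trans & 0 & 0 & 0 \\ b_u & C_u & C_{uv} & C_{ur} & 0 \\ 0 & C_{uv}\trans & C_v & C_{vr} & b_v \\ 0 & C_{ur}\trans & C_{vr}\trans & C_r & 0 \\ 0 & 0 & b_v\trans & 0 & \tau \end{bmatrix},
\]
relabeling the old $w$ as $u$ and placing the new index at $v$. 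The congruence $P=I+e_ue_v\trans$ transforms $\hat{A}$ into a matrix whose top-left $n\times n$ block is exactly $A$ and whose last row and column are $(\tau,0,b_v\trans,0,\tau)$; from this explicit form the nullity count and the $i$-nullity pair of $\hat{A}$ can be read off in terms of $\nul(A)$, $\nul(A(i))$, $\tau$, and certain linear functionals on $\ker(A)$. For a generic choice of $\tau$ these equal $(k,\ell)$, with the designation of $i=w$ in $H$ as either $u$ or $v$ made according to which yields the correct pair.

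The key step is verifying the $i$-SNIP of $\hat{A}$ in the $H$-context: by Proposition~\ref{prop:snipequiv}, I must show that if $X\in\msym[n+1]$ satisfies $\hat{A}\circ X=I\circ X=O$ and $(\hat{A}X)(i,:]=O$, then $X=O$. My approach is to decompose $X$ into a $V(G)$-block $X_G$ and a column $x$ corresponding to $v$, use the row-$v$ component of the equation $(\hat{A}X)(i,:]=O$ together with the parameter $\tau$ to express $x$ as a linear function of $X_G$, and then invoke the congruence $P$ to rewrite the remaining equations as an instance of the $i$-SNIP condition for $A\in\mptn(G)$. The $i$-SNIP of $A$ then forces the relevant part of $X_G$ to vanish, and unwinding the substitution gives $X=O$. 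With the $i$-SNIP of $\hat{A}$ in hand, Theorem~\ref{thm:invftsur} produces a matrix $A'\in\mptn(H)$ near $\hat{A}$, obtained from $\hat{A}$ via a small congruence in $\GL{i}{n+1}$ together with a small $\mptncl(H)$-perturbation; $A'$ inherits the $i$-nullity pair (by congruence) and the $i$-SNIP (by openness of the SNIP condition).

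The main obstacle is the SNIP-transfer step, where the correspondence $X\leftrightarrow(X_G,x)\leftrightarrow Y$ must produce a matrix $Y\in\msym$ satisfying exactly the $A$-SNIP hypotheses; in particular, the congruence $P$ is needed to handle the identification of rows and columns $u$ and $v$ of $\hat{A}$ with the single row and column $w$ of $A$, and the row-$v$ equation must be processed so that the residual condition on $Y$ is precisely $A\circ Y=I\circ Y=O$ and $(AY)(i,:]=O$. A secondary technical point is the nullity computation: if a generic $\tau$ yields $\nul(\hat{A})=\nul(A)\pm 1$ instead of $\nul(A)$, the remedy is to first reduce to a neutral realization via Corollary~\ref{cor:kk1withSNIP} and Lemma~\ref{lem:diagperturb}, carry out the construction there, and restore the original pair afterwards via an appropriate diagonal perturbation at the new vertex $v$.
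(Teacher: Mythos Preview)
Your approach has a genuine gap at the nullity-preservation step: the explicit matrix $\hat{A}$ need not have the same $i$-nullity pair as $A$, and no choice of $\tau$ (nor your diagonal-perturbation remedy) repairs this. Take $G=P_3$ with center $w$ and leaves $1,3$, split $w$ into $u,v$ so that $N_H(u)\setminus\{v\}=\{1\}$ and $N_H(v)\setminus\{u\}=\{3\}$ (so $H=P_4$), set $i=1$, and let $A$ be the adjacency matrix of $P_3$. Then $A$ has $i$-nullity pair $(1,0)$ and the $i$-SNIP (since $A(i)$ is nonsingular, Proposition~\ref{prop:pinvertible}). Your $\hat{A}$ becomes the block-diagonal matrix
\[
\hat{A}=\begin{bmatrix}0&1\\1&0\end{bmatrix}\oplus\begin{bmatrix}0&1\\1&\tau\end{bmatrix},
\]
which for \emph{every} $\tau$ has $i$-nullity pair $(0,1)$: both blocks are nonsingular, while deleting the row and column for vertex~$1$ leaves row $u$ identically zero. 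The two coordinates of the pair have moved in opposite directions, so varying $\tau$ cannot fix both, and passing first to a $(0,0)$ realization of $A$ via Corollary~\ref{cor:kk1withSNIP} does not help either (the same computation gives $(0,1)$ again). Incidentally, your congruence $P$ produces top-left block $A+\tau E_{w,w}$ together with a nontrivial last column $(\tau,0,b_v^\top,0,\tau)^\top$, not a clean $A\oplus[\,\cdot\,]$; this is a symptom of the same issue: relocating the $b_v$-block from row $u$ to row $v$ is neither a small perturbation of $A\oplus[1]$ nor a $\GL[n+1]{i}$-congruence of it.

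This is exactly why the paper does not attempt a direct construction. It introduces a modified perturbation map $\tilde{F}(B,Q)=(Q^\top A_1Q)\uplus B$ on $A_1=A\oplus[1]$, whose derivative is surjective because $A_1$ has the $i$-SNIP, and uses the inverse function theorem to find a matrix $\tilde{A}=\tilde{Q}^\top A_1\tilde{Q}$ that is simultaneously $\GL[n+1]{i}$-congruent to $A_1$ (hence inherits the $i$-nullity pair \emph{automatically}) and has rows $u,v$ parallel on $N_H(v)$. Only after this does an explicit congruence $E$ land in $\mptn(H)$. The point your sketch misses is that one cannot prescribe both the entries of $\hat{A}$ and its $i$-nullity pair by hand; the IFT is what reconciles these two demands. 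Your SNIP-transfer paragraph is also too schematic---the free entries of $X$ at $(u,N_H(v))$, $(v,N_H(u))$, and $(u,v)$ have no counterpart in any $Y$ with $A\circ Y=O$, so the reduction to the $G$-SNIP is not the one-line substitution you suggest---but the nullity failure is already fatal.
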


Throughout this subsection, we will assume that $G$ has $n$ vertices, $u = n$, $v = n+1$, and $n$ is the new vertex obtained by contracting $\{u,v\}$.  Note that $i$ can be any vertex in $[n]$. 
 Without loss of generality, we may assume $i \neq v$.  Let $\alpha = N_H(u)\setminus\{v\}$ and $\beta = N_H(v)\setminus\{u\}$.  By partitioning $[n]$ as $([n-1]\setminus (\alpha\cup\beta))$, $\alpha$, $\beta$, and $\{n\}$, we may write a matrix $A\in\mptn(G)$ as  
\begin{equation}
\label{eq:decA}
\renewcommand{\arraystretch}{1.2}
    A =
    \left[\begin{array}{ccc|c}
      &  &  & \bzero \\
      & A(n) &  & \ba \\
      &  &  & \bb \\
     \hline
     \bzero\trans & \ba\trans & \bb\trans & ?
    \end{array}\right],
\end{equation}
where we adopt the convention that $?$ indicates a number that may or may not be zero,
and $*$ indicates a nonzero number.
The proof involves two steps (first obtain $\tilde{A}$ and secondly obtain $A'$, see Figure \ref{fig:decontraction}). 

The first step is to perturb the intersection point $A_1 := A\oplus\begin{bmatrix} 1 \end{bmatrix}$ into a matrix  
\begin{equation}
\label{eq:decAtilda}
\renewcommand{\arraystretch}{1.2}
    \tilde{A} = \left[\begin{array}{ccc|cc}
      &  &  & \bzero & \bzero \\
      & \tilde{A}(\{n,n+1\}) &  & \tilde\ba & \bzero \\
      &  &  & \tilde\bb & \delta\tilde\bb \\
      \hline
     \bzero\trans & \tilde\ba\trans & \tilde\bb\trans & ? & 0 \\
     \bzero\trans & \bzero\trans & \delta\tilde\bb\trans & 0 & * \\
    \end{array}\right]
\end{equation}
with the same $i$-nullity pair.

The second step is to apply symmetric row and column operations to $\tilde{A}$ in order to obtain  
\begin{equation}
\label{eq:decAprime}
\renewcommand{\arraystretch}{1.2}
    A' = E\trans\tilde{A}E = \left[\begin{array}{ccc|cc}
      &  &  & \bzero & \bzero \\
      & \tilde{A}(\{n,n+1\}) &  & \tilde\ba & \bzero \\
      &  &  & \bzero & \delta\tilde\bb \\
      \hline
     \bzero\trans & \tilde\ba\trans & \bzero\trans & ? & * \\
     \bzero\trans & \bzero\trans & \delta\tilde\bb\trans & * & * \\
    \end{array}\right], 
\end{equation}
where  
\[
    E = I_{n-1} \oplus \left[\begin{array}{cc} 1 & 0\\ -\frac1\delta & 1 \end{array}\right].
\]
Note that $E\in\GL[n+1]{i}$ as $i\neq v$, and hence $A'$ and $\tilde{A}$ have the same $i$-nullity pair.  Figure~\ref{fig:decontraction} shows their relations as well as an overview of the proof.

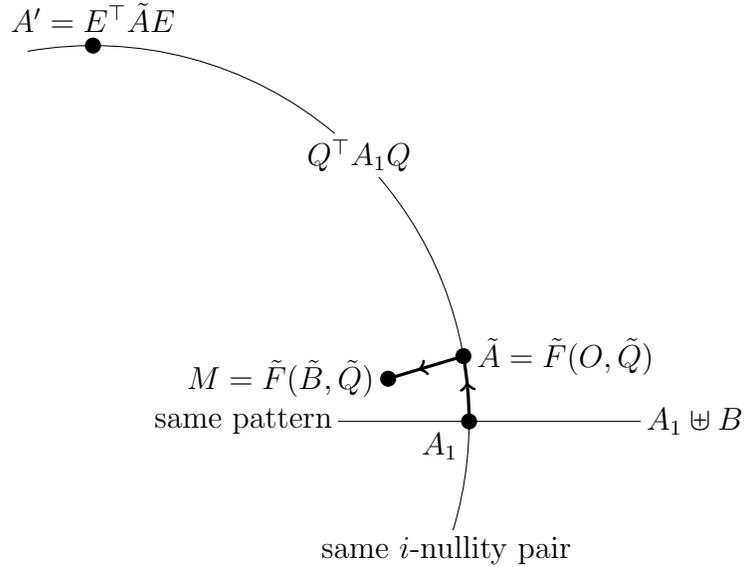
\begin{figure}[h]
\begin{center}
\begin{tikzpicture}
\clip (-6.5,-2) rectangle (4.5,6);
\draw (-3,0) -- (3,0);
\draw ([shift={(-40:5)}]-5,0) arc (-40:100:5);
\node[rectangle, fill=white, draw=none, text=black] at (-3,0) {same pattern}; 
\node[rectangle, fill=white, draw=none, text=black] at ([shift={(-20:5)}]-5,0) {same $i$-nullity pair};

\coordinate (a) at (0,0);
\node[label={225:$A_1$}, fill] (A) at (a) {};

\node[rectangle, fill=white, draw=none, text=black] at (3,0) {$A_1 \uplus B$}; 
\node[rectangle, fill=white, draw=none, text=black] at ([shift={(45:5)}]-5,0) {$Q\trans A_1Q$};

\coordinate (a1t) at ([shift={(10:5)}]-5,0);
\coordinate (a1p) at ([shift={(90:5)}]-5,0);
\coordinate (m1) at ([xshift=-1cm, yshift=-0.3cm]a1t); 

\draw[postaction={decorate}, decoration={markings, mark=at position 0.6 with {\arrow{>}}}, very thick, black] (a) arc (0:10:5);
\draw[postaction={decorate}, decoration={markings, mark=at position 0.6 with {\arrow{>}}}, very thick, black] (a1t) -- (m1);
\node[label={right:$\tilde{A} = \tilde{F}(O,\tilde{Q})$}, fill] (A1t) at (a1t) {};
\node[label={left:$M = \tilde{F}(\tilde{B},\tilde{Q})$}, fill] (M1) at (m1) {};
\node[label={above:$A' = E\trans \tilde{A}E$}, fill] (A1p) at (a1p) {};
\end{tikzpicture}
\end{center}
\caption{Illustration of the proof of Lemma~\ref{lem:decontraction}.}
\label{fig:decontraction}
\end{figure}

The first step to obtaining $\tilde{A}$ is very similar to the argument used for the supergraph lemma (Lemma~\ref{lem:supergraph}), except that here we will use a new perturbation function, which we denote by $\tilde{F}$.  Let $G_1 = G\dunion K_1$ be a spanning subgraph of $H$.  Then $A_1\in\mptn(G_1)$.  For each $B\in\mptn(G_1)$ and each real number $\delta$, define $B_\delta$ as the matrix obtained from $B$ by adding $\delta B[n,\beta]$ to $B[n+1,\beta]$ and $\delta B[\beta, n]$ to $B[\beta, n+1]$.  Thus, the set  
\[
    \mptn(G_1,\delta) := \{B_\delta: B\in\mptn(G_1)\}
\]
is a subspace with the same dimension as $\mptn(G_1)$.  Now we fix a vertex $w\in\beta$, and for each $C\in\msym[n+1]$ we define  
\[
    C\uplus B := C + B_\delta,
\]
where $\delta = \frac{C[n+1,w]}{C[n,w]}$ is a ratio defined by $C$.  Note that $C\uplus B$ is well-defined for each $C$ nearby $A_1$, and for $C\in\mptn(G_1)$, $C\uplus B$ is simply $C + B$.  Now let $\tilde{F}$ be the perturbation function   
\begin{equation}
\label{eq:decperturb}
    \tilde{F}(B,Q) = (Q\trans A_1Q) \uplus B
\end{equation}
defined on $B\in\mptncl(G_1)$ nearby $O$ and $Q\in\GL[n+1]{i}$ nearby $I$.  Since $\tilde{F}(B, I) = A_1 + B$, the calculation of the derivative of $\tilde{F}$ is almost the same as that of $F$ in Equation~\eqref{eq:supperturb}, and its connection to the SNIP is the same. Thus we have the following remark.

\begin{remark}
\label{rem:A1snipequiv}
Let $\tilde{F}$ be the perturbation function defined in Equation~\eqref{eq:decperturb}, and let $\dot{\tilde{F}}$ be its derivative.  Then $A_1$ has the $i$-SNIP if and only if $\dot{\tilde{F}}$ is surjective.
\end{remark}

Now we are ready to formalize the aforementioned arguments.

\begin{lemma}
\label{lem:parallel}
Let $G$ be a graph on $n$ vertices, $i\in V(G)$, $\beta\subseteq N_G(n)$, and $G_1 = G\dunion K_1$.  Suppose $A\in\mptn(G)$ has the $i$-SNIP.  Then there is a matrix $\tilde{A}$ of the form \eqref{eq:decAtilda} such that $A_1 = A\oplus\begin{bmatrix} 1 \end{bmatrix}$ and $\tilde{A}$ have the same $i$-nullity pair, $\tilde{A}$ has the property  
\[
    \{L\trans \tilde{A} + \tilde{A}L: L\in\mat[n+1],\ L[i,i) = \bzero\trans\} + \mptncl(G_1,\delta) = \msym[n+1],
\]
and $A'$ can be chosen arbitrarily close to $A_1$ with $\delta > 0$.  
\end{lemma}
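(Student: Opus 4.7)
The plan is to parallel the proof of the supergraph lemma (Lemma~\ref{lem:supergraph}) but with $\tilde{F}$ in place of $F$. First, since the $1\times 1$ matrix $\begin{bmatrix}1\end{bmatrix}$ is nonsingular, Proposition~\ref{prop:directsum} propagates the $i$-SNIP from $A$ to $A_1 = A\oplus\begin{bmatrix}1\end{bmatrix}$. Hence Remark~\ref{rem:A1snipequiv} yields the surjectivity of $\dot{\tilde{F}}$ at $(O,I)$, and Theorem~\ref{thm:invftsur} produces a smooth local right inverse of $\tilde{F}$ near $A_1 = \tilde{F}(O,I)$: for every $M\in\msym[n+1]$ sufficiently close to $A_1$, there exist $B'\in\mptncl(G_1)$ near $O$ and $Q'\in\GL[n+1]{i}$ near $I$ with $\tilde{F}(B',Q') = M$. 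The candidate matrix is then $\tilde{A} := (Q')\trans A_1 Q'$; it is congruent to $A_1$ via an element of $\GL[n+1]{i}$, so it has the same $i$-nullity pair as $A_1$, which equals the $i$-nullity pair of $A$.

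For a small $\delta>0$, I would take $M$ to agree with $A_1$ on its first $n$ rows and columns, set $M[n+1,n+1]=1$, $M[n+1,j]=M[j,n+1]=\delta\, A[n,j]$ for $j\in\beta$, and $M[j,n+1]=0$ for $j\in[n]\setminus\beta$. Then $M$ has the pattern \eqref{eq:decAtilda}, lies arbitrarily close to $A_1$ as $\delta\to 0^+$, and activates the inverse function theorem. The key verification is that $\tilde{A}$ inherits the pattern. Write $M=\tilde{A}+B'_{\delta^\ast}$, where $\delta^\ast=\tilde{A}[n+1,w]/\tilde{A}[n,w]$. Because $n+1$ is isolated in $G_1$, $B'[n+1,j]=0$ for $j\in[n]$, and $B'_{\delta^\ast}$ differs from $B'$ only in the $(n+1,\beta)$-block; therefore subtraction preserves every zero of $M$ at positions $(j,n+1)$ for $j\in\{n\}\cup\alpha\cup\bigl([n-1]\setminus(\alpha\cup\beta)\bigr)$. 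Evaluating the relation at $j=w$ forces $\delta^\ast=\delta$ (using $A[n,w]\neq 0$, which holds because $w\in\beta\subseteq N_G(n)$); substituting back gives $\tilde{A}[n+1,j]=\delta\,\tilde{A}[n,j]$ for all $j\in\beta$, the proportionality demanded by \eqref{eq:decAtilda}. An analogous entry-level argument, using that the only neighbors of $n$ in $G_1$ are $\alpha\cup\beta$, shows $\tilde{A}[n,j]=0$ for $j\in[n-1]\setminus(\alpha\cup\beta)$.

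Finally, the displayed identity is precisely the surjectivity at $(O,I)$ of the derivative of the ``rebased'' perturbation $\tilde{F}^{\tilde{A}}(B,Q) = (Q\trans\tilde{A}Q)\uplus B$ (now with ratio $\delta$). This derivative depends continuously on $(\tilde{A},\delta)$, and surjectivity of a linear map between fixed finite-dimensional spaces is an open condition on its matrix representation. Since the corresponding derivative at the base point $(A_1,0)$ is surjective, surjectivity persists for every $(\tilde{A},\delta)$ sufficiently close to $(A_1,0)$, and by shrinking the initial $\delta$ both $\|\tilde{A}-A_1\|$ and $\delta$ can be made arbitrarily small while staying with $\delta>0$. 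The main obstacle I anticipate is exactly this transfer: the SNIP hypothesis was supplied at $A_1$ (with $\delta=0$), whereas the required conclusion concerns the nearby but different matrix $\tilde{A}$ with a shifted, positive $\delta$. The resolution is not a direct SNIP-style computation at $\tilde{A}$ but rather the openness of surjectivity for a smoothly parametrized family of derivatives.
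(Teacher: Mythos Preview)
Your proposal is correct and follows essentially the same route as the paper: transfer the $i$-SNIP to $A_1$ via Proposition~\ref{prop:directsum}, invoke Remark~\ref{rem:A1snipequiv} and Theorem~\ref{thm:invftsur} for $\tilde{F}$, hit the target $M=(A_1)_\delta$, set $\tilde{A}=(Q')^\top A_1 Q'$, and close with an openness argument for surjectivity. Your write-up is in fact more careful than the paper's at two places: you explicitly verify $\delta^\ast=\delta$ (so that the subscript in $M=\tilde{A}+B'_{\delta^\ast}$ matches the statement's $\mptncl(G_1,\delta)$), and you spell out why the final surjectivity persists as a continuity/open-condition argument rather than asserting it.
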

\begin{proof}
The proof argument is essentially the same as the proof of Lemma~\ref{lem:supergraph}.  Since $A$ has the $i$-SNIP, from Proposition~\ref{prop:directsum} it follows that $A_1 = A\oplus\begin{bmatrix} 1 \end{bmatrix}$ also has the $i$-SNIP.  Let $\tilde{F}$ be the perturbation function defined in Equation~\eqref{eq:decperturb}, and let $\dot{\tilde{F}}$ be its derivative at $B = O$ and $Q = I$.  Thus, $\dot{\tilde{F}}$ is surjective by Remark~\ref{rem:A1snipequiv}.  By Theorem~\ref{thm:invftsur}, for each matrix $M\in\msym[n+1]$ close enough to $A_1$, there are matrices $\tilde{B}$ and $\tilde{Q}$ such that $\tilde{F}(\tilde{B}, \tilde{Q}) = M$.  

Now, for a small $\delta > 0$, choose the matrix $M$ as the matrix obtained from $A_1$ by adding $\delta A_1[n, \beta]$ to $A_1[n+1, \beta]$ and $\delta A_1[\beta, n]$ to $A_1[\beta, n+1]$.  That is, $M = (A_1)_\delta$.  Although $M$ need not have the same $i$-nullity pair, the matrix $\tilde{A} = \tilde{F}(O,\tilde{Q})$ satisfies $A' = \tilde{Q}\trans A_1\tilde{Q}$ and $\tilde{A} = M - \tilde{B}_\delta$.  When $\tilde{B}$ and $\tilde{Q}$ are small perturbations, $\tilde{A}$ has the form \eqref{eq:decAtilda}, and $\tilde{A}$ has the same $i$-nullity pair as $A_1$.  

Since the perturbation $\delta$ is small, we still have   
\[
    \{L\trans \tilde{A} + \tilde{A}L: L\in\mat,\ L[i,i) = \bzero\trans\} + \mptncl(G_1,\delta) = \msym[n+1].
\]
This completes the proof.
\end{proof}

The choice of the perturbation function $\tilde{F}$ successfully makes the two rows of $\tilde{A}[\{n,n_1\},\beta]$ parallel and makes $\tilde{A}$ possess a condition similar to the $i$-SNIP, which will be transformed into $i$-SNIP later.  Now the proof of Lemma~\ref{lem:decontraction} is just one step away.  

\begin{proof}[Proof of Lemma~\ref{lem:decontraction}]
Let $\tilde{A}$ be the matrix guaranteed by Lemma~\ref{lem:parallel} and 
\[
    E = I_{n-1} \oplus \left[\begin{array}{cc} 1 & 0\\ -\frac1\delta & 1 \end{array}\right].
\]
Then $A' = E\trans \tilde{A}E$ has the form \eqref{eq:decAprime}.  Since $\tilde{A}$ can be chosen arbitrarily close to $A_1$, we have $A'\in\mptn(H)$.  Since $E\in\GL[n+1]{i}$, the matrices $A'$, $\tilde{A}$, and $A_1$ have the same $i$-nullity pair.

Finally, $C\mapsto E\trans CE$ is a linear bijection from $\msym[n+1]$ to $\msym[n+1]$ as well as a bijection on
the space
\[
\mathcal{L} :=
\{L\in\mat[n+1],\ L[i,i) = \bzero\trans\}.
\]
By applying this bijection to each term in  
\[
    \{L\trans \tilde{A} + \tilde{A}L: L\in\mathcal{L} \}+ \mptncl(G_1,\delta) = \msym[n+1],
\]
we know that the span of   
\[
    \{E\trans L\trans (E\trans)^{-1}A' + A'E^{-1}LE: L\in\mathcal{L}\} \text{ and } \mptncl(H)
\]
is $\msym[n+1]$.  Additionally, since $L\mapsto E^{-1}LE$ is a linear bijection from 
$\mathcal{L}$
to itself, we get  
\[
    \{L\trans A' + A'L: L\in\mathcal{L}\} + \mptncl(H) = \msym[n+1],
\]
implying $A'$ has the $i$-SNIP.
\end{proof}

Combining the extended supergraph lemma (Lemma~\ref{cor:exsupergraph}) and the decontraction lemma (Lemma~\ref{lem:decontraction}), we obtain the desired minor monotonicity result.  

\minmono*


\subsection*{Acknowledgements}
This project started and was made possible by the 2021 IEPG-ZF Virtual Research Community at the American Institute of Mathematics (AIM), with support from the US National Science Foundation. 
Aida Abiad is partially supported by the Dutch Research Council via the grant VI.Vidi.213.085. The research of B. Curtis was partially supported by NSF grant 1839918. Mary Flagg and the AIM SQuaRE where this work was completed were partially supported by grant DMS 2331634 from the National Science Foundation. Jephian C.-H. Lin was supported by the Young Scholar Fellowship Program (grant no.\ NSTC-112-2628-M-110-003) from the National Science and Technology Council of Taiwan.



\begin{thebibliography}{99}
\addcontentsline{toc}{section}{References}

\bibitem{BFH04}
F.~Barioli, S.~M. Fallat, and L.~Hogben.
\newblock Computation of minimal rank and path cover number for graphs.
\newblock {\em Linear Algebra Appl.}, 392:289--303, 2004.


\bibitem{BFH05}
F.~Barioli, S.~M. Fallat, and L.~Hogben.
\newblock A variant on the graph parameters of {C}olin de {V}erdi\`ere:
  Implications to the minimum rank of graphs.
\newblock {\em Electron.\ J.\ Linear Algebra}, 13:387--404, 2005.


\bibitem{IEPG2}
W.~Barrett, S.~Butler, S.~M. Fallat, H.~T. Hall, L.~Hogben, J.~C.-H. Lin,
  B.~Shader, and M.~Young.
\newblock The inverse eigenvalue problem of a graph: {M}ultiplicities and
  minors.
\newblock {\em J.\ Combin.\ Theory Ser.~B}, 142:276--306, 2020.

\bibitem{lambmu2013}
W.~Barrett, A.~Lazenby, N.~Malloy, C.~Nelson, W.~Sexton, R.~Smith, J.~Sinkovic and T.~Yang. The combinatorial inverse eigenvalue problem: Complete graphs and small graphs with strict inequality. \emph{Electronic J. Linear Algebra}, 26:656--672, 2013.


\bibitem{gSAP}
W.~Barrett, S.~M. Fallat, H.~T. Hall, L.~Hogben, J.~C.-H. Lin, and B.~Shader.
\newblock Generalizations of the {S}trong {A}rnold {P}roperty and the minimum
  number of distinct eigenvalues of a graph.
\newblock {\em Electron.\ J.\ Combin.}, 24:\#P2.40, 2017.


\bibitem{CdVF}
Y.~Colin de Verdi\`ere.
\newblock Sur un nouvel invariant des graphes et un crit\`ere de planarit\'e.
\newblock {\em J.\ Combin.\ Theory Ser.~B}, 50:11--21, 1990.


\bibitem{CdV}
Y.~Colin de Verdi\`ere.
\newblock On a new graph invariant and a criterion for planarity.
\newblock In {\it Graph Structure Theory}, pp. 137--147, American Mathematical
  Society, Providence, RI, 1993.


\bibitem{Ferguson80}
W.~E.~Ferguson, Jr.
\newblock The construction of {J}acobi and periodic {J}acobi matrices with
  prescribed spectra.
\newblock {\em Math.\ Comp.}, 35:1203--1220, 1980.


\bibitem{GW76}
L.~J. Gray and D.~G. Wilson.
\newblock Construction of a {J}acobi matrix from spectral data.
\newblock {\em Linear Algebra Appl.}, 14:131--134, 1976.


\bibitem{Hald76}
O.~H. Hald.
\newblock Inverse eigenvalue problems for {J}acobi matrices.
\newblock {\em Linear Algebra Appl.}, 14:63--85, 1976.


\bibitem{Hochstadt74}
H.~Hochstadt.
\newblock On the construction of a {J}acobi matrix from spectral data.
\newblock {\em Linear Algebra Appl.}, 8:435--446, 1974.


\bibitem{IEPGZF22}
L.~Hogben, J.~C.-H. Lin, and B.~Shader.
\newblock {\em Inverse Problems and Zero Forcing for Graphs}.
\newblock American Mathematical Society, Providence, 2022.




\bibitem{JDS2003} C. R. Johnson, A. L. Duarte and C. M. Saiago. The Parter--Wiener Theorem: Refinement and Generalization. \emph{SIAM Journal on Matrix Analysis and Applications}, 25(2): 352--361, 2003. 


\bibitem{Zsap}
J.~C.-H. Lin.
\newblock Using a new zero forcing process to guarantee the {S}trong {A}rnold
  {P}roperty.
\newblock {\em Linear Algebra Appl.}, 507:229--250, 2016.


\bibitem{MS13}
K.~H. Monfared and B.~Shader.
\newblock Construction of matrices with a given graph and prescribed interlaced
  spectral data.
\newblock {\em Linear Algebra Appl.}, 438:4348--4358, 2013.


\bibitem{RS2010} N. Robertson and P. Seymor. Graph minors XXIII. Nash-Williams' immersion conjecture. \emph{Journal of Combinatorial Theory, Series B} 100:181--205, 2010.


\bibitem{P1960} S. Parter. On the eigenvalues and eigenvectors of a class of matrices. \emph{J.\ Soc.\ Indust.\ Appl.\ Math.}, 8:376--388, 1960.


\bibitem{W1984} G. Wiener. Spectral multiplicity and splitting results for a class of qualitative matrices.
\emph{Linear Algebra Appl.}, 61: 15--29, 1984.


\end{thebibliography}

\end{document}